\documentclass[10pt]{amsart}
\usepackage{amsxtra, amsfonts, amsmath, amsthm, amstext, amssymb, amscd, mathrsfs, verbatim, color}
\usepackage{threeparttable}
\usepackage{mathtools}
\usepackage[ansinew]{inputenc}\usepackage[T1]{fontenc}
\usepackage[all,cmtip]{xy}
\usepackage{tikz}

\usetikzlibrary {positioning}
\date{\today}

\addtolength{\topmargin}{-0.4cm}
\addtolength{\textheight}{0.4cm}
\addtolength{\evensidemargin}{-0.6cm}
\addtolength{\oddsidemargin}{-0.6cm}
\addtolength{\textwidth}{1.2cm}
\theoremstyle{plain}


\newtheorem{theorem}{Theorem}[section]
\newtheorem{lemma}[theorem]{Lemma}
\newtheorem{definition-theorem}[theorem]{Definition-Theorem}
\newtheorem{proposition}[theorem]{Proposition}
\newtheorem{corollary}[theorem]{Corollary}

\theoremstyle{definition}
\newtheorem{definition}[theorem]{Definition}
\newtheorem{example}[theorem]{Example}
\newtheorem{remark}[theorem]{Remark}
\newtheorem{notation}[theorem]{Notation}
\newcommand \bth[1] { \begin{theorem}\label{t#1} }
\newcommand \ble[1] { \begin{lemma}\label{l#1} }

\newcommand \bpr[1] { \begin{proposition}\label{p#1} }
\newcommand \bco[1] { \begin{corollary}\label{c#1} }
\newcommand \bde[1] { \begin{definition}\label{d#1}\rm }
\newcommand \bex[1] { \begin{example}\label{e#1}\rm }
\newcommand \bre[1] { \begin{remark}\label{r#1}\rm }

\newcommand \bnota[1] {\begin{notation}\label{n#1}\rm }
\newcommand {\ele} { \end{lemma} }

\newcommand {\epr} { \end{proposition} }
\newcommand {\eco} { \end{corollary} }
\newcommand {\ede} { \end{definition} }
\newcommand {\eex} { \end{example} }
\newcommand {\ere} { \end{remark} }
\newcommand {\enota} { \end{notation} }





\begin{document}

\title[Derivatives]{Construction of simple quotients of Bernstein-Zelevinsky derivatives and  highest derivative multisegments II: Minimal sequences} 

\author[Kei Yuen Chan]{Kei Yuen Chan}
\address{
Department of Mathematics, The University of Hong Kong
}

\email{kychan1@hku.hk}

\maketitle

\begin{abstract}
Let $F$ be a non-Archimedean local field. For any irreducible smooth representation $\pi$ of $\mathrm{GL}_n(F)$ and a multisegment $\mathfrak m$, we have an operation $D_{\mathfrak m}(\pi)$ to construct a simple quotient $\tau$ of a Bernstein-Zelevinsky derivative of $\pi$. This article continues the previous one to study the following poset
\[  \mathcal S(\pi, \tau) :=\left\{ \mathfrak n : D_{\mathfrak n}(\pi)\cong \tau \right\} ,
\]
where $\mathfrak n$ runs for all the multisegments. Here the partial ordering on $\mathcal S(\pi, \tau)$ comes from the Zelevinsky ordering. We show that the poset has a unique minimal multisegment. Along the way, we introduce two new ingredients: fine chain orderings and local minimizability. 
\end{abstract}

\section{Introduction}

The Bernstein-Zelevinsky (BZ) derivative is a classical tool in studying the representation theory of $p$-adic general linear groups, originally introduced in \cite{BZ77} and \cite{Ze80}. In \cite{Ch22+d}, we transfer several problems concerning simple quotients into sequences of derivatives of essentially square-integrable representations, aiming to obtain specific information about Jacquet modules. In this article, we further investigate properties of these sequences of derivatives. In the subsequent work \cite{Ch22+e}, we explore additional properties of minimal sequences.

One classical viewpoint on Jacquet modules-and their adjoint functor, parabolic induction-is through the lens of Hopf algebra theory, as discussed in \cite{HM08}. This approach has proven to be very successful and useful, but it is typically conducted at the level of the Grothendieck group. The semisimplification of Jacquet modules sometimes has a more straightforward formula, as noted in \cite{Ze80} and \cite{Ma13, MT15} (for other classical groups). However, it can also be challenging in certain special cases, as demonstrated in \cite{De23} and \cite{Ja07}.

On the other hand, the irreducible quotients and submodules of Jacquet modules can serve as simpler objects for study and are also valuable for our applications. Our notion of derivatives of essentially square-integrable representations focuses on examining some specific irreducible quotients of the Jacquet module that arise from a maximal parabolic subgroup of a general linear group. For a detailed description of derivatives, see Section \ref{ss main results} below.

The main problem addressed in this article is as follows: given a collection of nice sequences of derivatives of essentially square-integrable representations, we ask whether it is possible to identify a canonical choice of an element among those that produce the same derivative. Here, the term "nice" refers to sequences that arise from Bernstein-Zelevinsky derivatives. The results concerning these simple quotients can be viewed as a generalization of the simpler cases of ladder representations discussed in \cite{LM14} and generic representations explored in \cite{Ch21}.

The study of simple quotients of BZ derivatives is also related to a branching law problem of a sign representation for affine Hecke algebras of type A (see \cite[Appendix]{Ch22+d}). Some other applications such as on the Harish-Chandra modules for $\mathrm{GL}_n(\mathbb C)$ will be explored elsewhere, see e.g. \cite{CW25}.

\subsection{Notations}
Let $G_n=\mathrm{GL}_n(F)$, the general linear group over a non-Archimedean local field $F$. The representation theory of $G_n$ over a non-Archimedean local field has a rich structure, particularly concerning its irreducible representations. A significant advancement in understanding these representations was made by Zelevinsky, who introduced multisegments to parametrize the irreducible representations of $G_n$. In this context, we fix an irreducible cuspidal representation $\rho$ of $G_r$ (for some $r$). We mainly consider Bernstein components associated with so-called simple types, as defined in \cite{BK93}, and indeed Bernstein components for different $\rho$ are closely related.

\begin{itemize} 
\item Let $\nu: G_n \rightarrow \mathbb C^{\times}$ be the character $\nu(g)=|\mathrm{det}(g)|_F$, where $|.|_F$ is the norm for $F$. 
\item For $a,b \in \mathbb Z$ with $b-a \in \mathbb Z_{\geq 0}$, we call 
\begin{align} \label{eqn seg def}
[a,b]_{\rho}:=\left\{ \nu^a\rho, \ldots, \nu^b\rho \right\} 
\end{align}
to be a {\it segment}. 

We also set $[a,a-1]_{\rho}=\emptyset$ for $a \in \mathbb Z$. For a segment $\Delta=[a,b]_{\rho}$, we write $a(\Delta)=\nu^a\rho$ and $b(\Delta)=\nu^b\rho$. We also write: \[[a]_{\rho} :=[a,a]_{\rho}, \]
which is called a singleton segment. We may also write $[\nu^a\rho, \nu^b\rho]$ for $[a,b]_{\rho}$ and write $[\nu^a\rho]$ for $[a]_{\rho}$. When we talk about length of a segment, we usually refer to the quantity $b-a+1$ for a segment $[a,b]_{\rho}$.
\item Let $\mathrm{Seg}_{\rho}$ be the set of all segments. We also consider the empty set $\emptyset$ to be in $\mathrm{Seg}_{\rho}$.
\item A {\it multisegment} is a multiset of non-empty segments. Let $\mathrm{Mult}_{\rho}$ be the set of all multisegments. We also consider the empty set $\emptyset$ to be also in $\mathrm{Mult}_{\rho}$. 
\end{itemize}

We now introduce more notions to describe properties of segments and multisegments. The more significant one is the intersection-union process, which describes possible composition factors appearing in a so-called standard representation \cite{Ze80}. The ordering is also compatible with the closure relation of the geometry in \cite{Ze81}, while we do not require such geometric interpretation in this work.
\begin{itemize}
\item Two segments $\Delta$ and $\Delta'$ are said to be {\it linked} if $\Delta \cup \Delta'$ is still a segment, and $\Delta \not\subset \Delta'$ and $\Delta' \not\subset \Delta$. Otherwise, it is called to be not linked or unlinked.
\item For $\rho_1, \rho_2 \in \mathrm{Irr}^c$, we write $\rho_2 < \rho_1$ if $\rho_1 \cong \nu^a \rho_2$ for some integer $a > 0$. For two segments $\Delta_1, \Delta_2$, we write $\Delta_1 <\Delta_2$ if $\Delta_1$ and $\Delta_2$ are linked and $b(\Delta_1)< b(\Delta_2)$. 
\item For two multisegments $\mathfrak m$ and $\mathfrak n$, we write $\mathfrak m+\mathfrak n$ to be the union of two multisegments, counting multiplicities. For a multisegment $\mathfrak m$ and a segment $\Delta$, $\mathfrak m+\Delta=\mathfrak m+\left\{\Delta \right\}$ if $\Delta$ is non-empty; and $\mathfrak m+\Delta=\mathfrak m$ if $\Delta$ is empty. The notions $\mathfrak m-\mathfrak n$ and  $\mathfrak m-\Delta$ are defined in a similar way. 
\item For two segments $\Delta$ and $\Delta'$, we write $\Delta \cup \Delta'$ and $\Delta \cap \Delta'$ for their set-theoretic union and intersection respectively.	
\item We say that a multisegment $\mathfrak n$ is obtained from $\mathfrak m$ by an {\it elementary intersection-union process/operation} if 
\[  \mathfrak n=\mathfrak m-\Delta-\Delta'+\Delta\cup \Delta'+\Delta \cap \Delta'
\]
for a pair of linked segments $\Delta$ and $\Delta'$ in $\mathfrak m$.
\item For two multisegments $\mathfrak m_1$ and $\mathfrak m_2$, write $\mathfrak m_2 \leq_Z \mathfrak m_1$ if $\mathfrak m_2$ can be obtained by a sequence of elementary intersection-union operations from $\mathfrak m_1$ (in the sense of \cite{Ze80}, see \cite{Ch22+d}) or $\mathfrak m_1=\mathfrak m_2$. In particular, if any pair of segments in $\mathfrak m$ is unlinked, then $\mathfrak m$ is a minimal element in $\mathrm{Mult}_{\rho}$ under $\leq_Z$. We shall equip $\mathrm{Mult}_{\rho}$ with the poset structure by $\leq_Z$. 
\item For a segment $\Delta=[a,b]_{\rho}$, define ${}^-\Delta=[a+1,b]_{\rho}$. For a multisegment $\mathfrak m$, define 
\[   {}^-\mathfrak m=\left\{ {}^-\Delta: \Delta \in \mathfrak m, \Delta \mbox{ is not a singleton} \right\} \quad \mbox{(counting multiplicities)} .
\]
\end{itemize}

We finally explain some representation-theoretic notions.
\begin{itemize}
\item For each segment $\Delta$, we shall denote by $\mathrm{St}(\Delta)$ the corresponding essentially square-integrable representation \cite{Ze80}. 
\item For any smooth representation $\pi_1$ of $G_{n_1}$ and smooth representation $\pi_2$ of $G_{n_2}$, define $\pi_1\times \pi_2$ to be the normalized parabolic induction.
\item Let $\mathrm{Irr}_{\rho}(G_n)$ be the set of all irreducible representations of $G_n$ which are irreducible quotients of $\nu^{a_1}\rho \times \ldots \times \nu^{a_k}\rho$, for some integers $a_1, \ldots, a_k \in \mathbb Z$. Let $\mathrm{Irr}_{\rho}=\sqcup_n \mathrm{Irr}_{\rho}(G_n)$. As mentioned above, representations in $\mathrm{Irr}_{\rho}(G_n)$ are in a Bernstein component corresponding to a simple type. See \cite[Section 3.8]{Ch22+d} for more discussions on this issue.
\end{itemize}

\subsection{Main results} \label{ss main results}

Let $N_i \subset G_n$ (depending on $n$) be the unipotent radical containing matrices of the form $\begin{pmatrix} I_{n-i} & * \\ & I_i \end
{pmatrix}$. For a smooth representation $\pi$ of $G_n$, we write $\pi_{N_i}$ to be its normalized Jacquet module associated to $N_i$. 

For $\pi \in \mathrm{Irr}_{\rho}(G_n)$ and a segment $\Delta=[a,b]_{\rho}$, there is at most one irreducible module $\tau \in \mathrm{Irr}_{\rho}(G_{n-i})$ such that 
 \[\tau \boxtimes \mathrm{St}(\Delta) \hookrightarrow \pi_{N_{b-a+1}} .
\]
 If such $\tau$ exists, we denote such $\tau$ by $D_{\Delta}(\pi)$. Otherwise, we set $D_{\Delta}(\pi)=0$. We shall refer $D_{\Delta}$ to be a {\it derivative}. Let $\varepsilon_{\Delta}(\pi)$ be the largest integer $k$ such that $(D_{\Delta})^k(\pi)\neq 0$.

A sequence of segments $[a_1,b_1]_{\rho}, \ldots, [a_k,b_k]_{\rho}$ (where $a_j, b_j\in \mathbb Z$) is said to be in an {\it ascending order} if for any $i< j$, either $[a_i,b_i]_{\rho}$ and $[a_j,b_j]_{\rho}$ are unlinked; or $a_i<a_j$. For a multisegment $\mathfrak n \in \mathrm{Mult}_{\rho}$, we write the segments in $\mathfrak n$ in an ascending order $\Delta_1, \ldots, \Delta_k$. Define 
\[  D_{\mathfrak n}(\pi):=D_{\Delta_k}\circ \ldots \circ D_{\Delta_1}(\pi) .
\]
The derivative is independent of the ordering of an ascending sequence \cite{Ch22+d}. In particular, one may choose the ordering such that $a_1\leq \ldots \leq a_k$. We say that $\mathfrak n$ is {\it admissible} to $\pi$ if $D_{\mathfrak n}(\pi)\neq 0$. We refer the reader to \cite{Mi09, LM16, Ch22+d} (and references therein) for more theory on derivatives.

For $\pi \in \mathrm{Irr}_{\rho}$, denote its $i$-th Bernstein-Zelevinsky derivative by $\pi^{(i)}$ (see \cite{Ze80, Ch22+d} for precise definitions and we shall not need this in this sequel). For a simple quotient $\tau$ of $\pi^{(i)}$, define
\[ \mathcal S(\pi, \tau) := \left\{ \mathfrak n \in \mathrm{Mult}_{\rho}: D_{\mathfrak n}(\pi) \cong \tau \right\} .
\]
The ordering  $\leq_Z$ induces a partial ordering on $\mathcal S(\pi, \tau)$, and we shall regard $\mathcal S(\pi, \tau)$ as a poset.

In \cite{Ch22+d}, we have introduced a combinatorial process, called {\it removal process} (see section \ref{ss removal process}), in studying the effect of $D_{\Delta}$. Two applications of removal process are given below:

\begin{theorem} (=Theorem \ref{thm closed under zelevinsky}) \label{cor closedness property}
Let $\pi \in \mathrm{Irr}_{\rho}$. Let $\tau$ be a simple quotient of $\pi^{(i)}$ for some $i$. If $\mathfrak n_1, \mathfrak n_2 \in \mathcal S(\pi, \tau)$ and $\mathfrak n_1 \leq_Z \mathfrak n_2$, then any $\mathfrak n_3 \in \mathrm{Mult}_{\rho}$ satisfying $\mathfrak n_1\leq_Z \mathfrak n_3\leq_Z \mathfrak n_2$ is also in $\mathcal S(\pi, \tau)$. 
\end{theorem}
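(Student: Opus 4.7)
The plan is to reduce to a single elementary intersection-union (IU) step and then appeal to the removal process from Section \ref{ss removal process}. By definition of $\leq_Z$, the hypothesis $\mathfrak{n}_1 \leq_Z \mathfrak{n}_3 \leq_Z \mathfrak{n}_2$ yields a chain of elementary IU operations
\[\mathfrak{n}_2 = \mathfrak{m}_0 \geq_Z \mathfrak{m}_1 \geq_Z \cdots \geq_Z \mathfrak{m}_p = \mathfrak{n}_3 \geq_Z \cdots \geq_Z \mathfrak{m}_{p+q} = \mathfrak{n}_1\]
passing through $\mathfrak{n}_3$. It therefore suffices to prove a single-step statement: \emph{if $\mathfrak{m} \geq_Z \mathfrak{m}'$ differ by one elementary IU operation, $\mathfrak{m} \in \mathcal{S}(\pi, \tau)$, and some $\mathfrak{m}^\flat \in \mathcal{S}(\pi, \tau)$ satisfies $\mathfrak{m}^\flat \leq_Z \mathfrak{m}'$, then $\mathfrak{m}' \in \mathcal{S}(\pi, \tau)$.} Applied inductively along the chain with $\mathfrak{m}^\flat = \mathfrak{n}_1$ throughout, this gives the theorem.

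For the single-step statement, write $\mathfrak{m}' = \mathfrak{m} - \Delta - \Delta' + (\Delta \cup \Delta') + (\Delta \cap \Delta')$ for linked $\Delta, \Delta' \in \mathfrak{m}$. The idea is to fix ascending-order enumerations of $\mathfrak{m}$ and $\mathfrak{m}'$ that agree outside $\{\Delta, \Delta'\}$ (respectively $\{\Delta \cup \Delta', \Delta \cap \Delta'\}$), which reduces the comparison of $D_{\mathfrak{m}}(\pi)$ and $D_{\mathfrak{m}'}(\pi)$ to that of two-step compositions $D_\Delta \circ D_{\Delta'}$ versus $D_{\Delta \cap \Delta'} \circ D_{\Delta \cup \Delta'}$ applied to a common intermediate representation $\sigma$. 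Using the removal-process description of $D_\Xi(\sigma)$ in terms of the multisegment attached to $\sigma$, one aims for a local exchange identity: whenever both outputs are nonzero, they coincide. Nonvanishing of $D_{\mathfrak{m}'}(\pi)$ is supplied by $\mathfrak{m}^\flat$, via the monotonicity statement that $D_{\mathfrak{m}^\flat}(\pi) \neq 0$ together with $\mathfrak{m}^\flat \leq_Z \mathfrak{m}'$ implies $D_{\mathfrak{m}'}(\pi) \neq 0$, which one verifies by comparing how the standard representations attached to $\mathfrak{m}^\flat$ and $\mathfrak{m}'$ embed into $\pi_{N}$ via Frobenius reciprocity.

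The main obstacle is the local exchange identity. Establishing that the two orderings produce identical removal-process outputs whenever neither is killed requires delicate combinatorial bookkeeping of how the derivatives of the surrounding segments interact with the linked pair $\Delta, \Delta'$ as opposed to their replacements $\Delta \cup \Delta', \Delta \cap \Delta'$. I expect this is where the \emph{fine chain orderings} and \emph{local minimizability} notions advertised in the abstract enter, providing the precise framework to control the exchange step-by-step; the rest of the argument is then a routine induction along the chain constructed in the first paragraph.
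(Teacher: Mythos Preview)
Your reduction to a single elementary intersection--union step is sound, and your single-step statement is indeed equivalent to the theorem. The gap is in how you propose to prove that single step. You split the task into (i) nonvanishing of $D_{\mathfrak m'}(\pi)$, supplied by $\mathfrak m^\flat$, and (ii) a ``local exchange identity'' asserting that whenever both $D_{\mathfrak m}(\pi)$ and $D_{\mathfrak m'}(\pi)$ are nonzero they agree. Step (ii) is false. For instance, with $\mathfrak h=\{[0,2]_\rho,[0,4]_\rho,[1,3]_\rho\}$, take $\mathfrak m=\{[0,1]_\rho,[1,3]_\rho\}$ and its elementary IU $\mathfrak m'=\{[0,3]_\rho,[1]_\rho\}$: both are admissible to $\mathfrak h$, yet $\mathfrak r(\mathfrak m,\mathfrak h)=\{[0,4]_\rho,[2]_\rho\}$ while $\mathfrak r(\mathfrak m',\mathfrak h)=\{[0,2]_\rho,[2,4]_\rho\}$. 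So nonvanishing alone cannot force agreement; the hypothesis $\mathfrak m^\flat\in\mathcal S(\pi,\tau)$ with $\mathfrak m^\flat\leq_Z\mathfrak m'$ must do more than certify $D_{\mathfrak m'}(\pi)\neq 0$---it must constrain the \emph{value}. Relatedly, the reduction to a ``common intermediate $\sigma$'' does not localize cleanly: the positions of $\Delta,\Delta'$ versus $\Delta\cup\Delta',\Delta\cap\Delta'$ in an ascending order differ relative to the other segments, so one cannot in general isolate a two-step comparison on a fixed $\sigma$.

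The paper's argument does not attempt a local exchange. Via Theorem~\ref{thm isomorphic derivatives} it passes to the combinatorial identity $\mathfrak r(\mathfrak n_1,\mathfrak h)=\mathfrak r(\mathfrak n_2,\mathfrak h)$ for $\mathfrak h=\mathfrak{hd}(\pi)$, then introduces the \emph{fine chain ordering} $\leq^{fc}$ and proves (Lemma~\ref{lem intersection union fine sequ comp}) that $\leq_Z$ reverses $\leq^{fc}$. Since $\mathfrak r(\mathfrak n,\mathfrak h)=\mathfrak r(\mathfrak n',\mathfrak h)$ iff the fine chains coincide (Lemma~\ref{lem coincide lemma}), the chain $\mathfrak n_1\leq_Z\mathfrak n_3\leq_Z\mathfrak n_2$ sandwiches $\mathrm{fc}_{\mathfrak h}(\mathfrak n_3)$ between two \emph{equal} fine chains, forcing equality. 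In short, $\mathfrak n_1$ enters not merely as a witness to nonvanishing but as the lower bound in a squeeze argument on a total-like order. (A minor point: local minimizability is used only for Theorem~\ref{thm unique minimal}, not here.)
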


In other words, $\mathcal S(\pi, \tau)$ is {\it convex} in the sense of \cite[Section 3.1]{St12}.

\begin{theorem} (=Theorem \ref{thm unique module}) \label{thm unique minimal}
Let $\pi \in \mathrm{Irr}_{\rho}$. Let $\tau$ be a simple quotient of $\pi^{(i)}$ for some $i$. If $\mathcal S(\pi, \tau)\neq \emptyset$, then $\mathcal S(\pi, \tau)$ has a unique minimal element with respect to $\leq_Z$. 
\end{theorem}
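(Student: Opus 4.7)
The plan is to prove the slightly stronger statement that any two elements $\mathfrak n_1, \mathfrak n_2 \in \mathcal S(\pi,\tau)$ admit a common lower bound within $\mathcal S(\pi,\tau)$ with respect to $\leq_Z$. This immediately implies the uniqueness of a minimal element: if two minimal elements existed, each would have to coincide with any common lower bound of them, forcing the two to be equal. (Theorem \ref{cor closedness property} is not strictly needed for this reduction, but it explains structurally why $\mathcal S(\pi,\tau)$ behaves as a convex subposet, and hence why producing a single common lower bound is enough.)

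To construct such a common lower bound, I would induct on the total number of cuspidal constituents (sum of lengths of segments) of $\mathfrak n_1$, which necessarily equals the analogous total for $\mathfrak n_2$ since both measure the Bernstein–Zelevinsky degree linking $\pi$ to $\tau$; the base case is trivial. For the inductive step the strategy is to peel off a common first singleton derivative. Choose $\nu^a\rho$ to be minimal (with respect to $<$) among the cuspidal supports appearing in $\mathfrak n_1+\mathfrak n_2$, set $\Delta_0 = [a]_{\rho}$, and let $\pi_0 = D_{\Delta_0}(\pi)$. The minimality of $\nu^a\rho$ should, via the removal process of \cite{Ch22+d}, single out for each $j\in\{1,2\}$ a canonical segment $\Delta_j \in \mathfrak n_j$ with $a(\Delta_j)=\nu^a\rho$ together with a multisegment $\mathfrak n_j'$ satisfying $\mathfrak n_j'+\Delta_0 \leq_Z \mathfrak n_j$ and $D_{\mathfrak n_j'}(\pi_0)=\tau$. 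Applying the inductive hypothesis to the pair $(\pi_0,\tau)$ produces a common lower bound $\mathfrak n' \in \mathcal S(\pi_0,\tau)$ of $\mathfrak n_1'$ and $\mathfrak n_2'$; one then sets $\mathfrak n := \mathfrak n'+\Delta_0$ and verifies $\mathfrak n \in \mathcal S(\pi,\tau)$ via the identity $D_{\mathfrak n}(\pi)=D_{\mathfrak n'}(\pi_0)=\tau$.

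The main obstacle is verifying $\mathfrak n = \mathfrak n'+\Delta_0 \leq_Z \mathfrak n_j$ for each $j$. Since $\leq_Z$ is defined through chains of elementary intersection–union operations, one must concatenate the reduction $\mathfrak n_j \rightsquigarrow \mathfrak n_j'+\Delta_0$ with a chain $\mathfrak n_j'+\Delta_0 \rightsquigarrow \mathfrak n'+\Delta_0$; for the latter one needs every intersection–union move on $\mathfrak n_j'$ to lift to a corresponding move on $\mathfrak n_j'+\Delta_0$ without disturbing the distinguished singleton $\Delta_0$. Guaranteeing this compatibility is precisely where the fine chain orderings and local minimizability developed later in the paper should come in: they should furnish a canonical choice of $\Delta_j$ compatible with the poset structure, and ensure that intersection–union moves below the first derivative lift back compatibly above, thereby closing the induction.
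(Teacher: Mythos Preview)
Your high-level strategy---proving that $\mathcal S(\pi,\tau)$ is downward directed---is reasonable, but the inductive step as written breaks down at two places.

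\textbf{The inequality $\mathfrak n_j'+\Delta_0\leq_Z\mathfrak n_j$ is in general unachievable.} Elementary intersection--union operations never create new right endpoints: the multiset of $b$-values can only shrink. Since $a$ is chosen minimal, a singleton $[a]_\rho$ can appear in some $\mathfrak m\leq_Z\mathfrak n_j$ only if $[a]_\rho$ already lies in $\mathfrak n_j$. For example, if $\mathfrak n_j=\{[a,b]_\rho\}$ with $b>a$, then $\mathfrak n_j$ is $\leq_Z$-minimal and contains no singleton at all. (The natural replacement $\Delta_j\mapsto [a]_\rho+{}^-\Delta_j$ moves \emph{up} in $\leq_Z$, not down.)

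\textbf{The identity $D_{\mathfrak n_j'}(\pi_0)=\tau$ also fails.} The removal process for $[a]_\rho$ selects the \emph{shortest} segment in $\mathfrak h[a]$, while the one for $\Delta_j$ selects the shortest segment in $\mathfrak h[a]$ \emph{containing} $\Delta_j$; these need not match. With $\mathfrak h=\{[0,1]_\rho,[0,3]_\rho\}$ and $\mathfrak n_1=\{[0,3]_\rho\}$ one has $\mathfrak r([0]_\rho,\mathfrak h)=\{[1]_\rho,[0,3]_\rho\}$, to which $[1,3]_\rho={}^-\Delta_1$ is not even admissible; thus $\mathfrak r(\{[0]_\rho,[1,3]_\rho\},\mathfrak h)=\infty\neq\{[0,1]_\rho\}=\mathfrak r(\mathfrak n_1,\mathfrak h)$.

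The paper's argument is structurally different. It does not build lower bounds for arbitrary pairs; instead it takes two \emph{minimal} elements $\mathfrak n,\mathfrak n'$ of $\mathcal S'(\mathfrak h,\mathfrak p)$ and proves $\mathfrak n=\mathfrak n'$ directly. The crucial ingredient is an intrinsic characterization of minimality (Lemmas~\ref{lem maximzable and resultant} and~\ref{lem maxmize intersect}): $\mathfrak n$ is minimal precisely when no term of the fine chain $\mathrm{fc}_{\mathfrak h}(\mathfrak n)$ is locally minimizable. When $\mathfrak n[a]\cap\mathfrak n'[a]\neq\emptyset$ one peels off a \emph{common segment} (not a singleton) and inducts; when $\mathfrak n[a]\cap\mathfrak n'[a]=\emptyset$, comparing $\mathfrak{trd}(\mathfrak n,\mathfrak h)$ with $\mathfrak{trd}(\mathfrak n',\mathfrak h)$ (equal by a smaller induction) forces $(\mathfrak n,\mathfrak h)$ to be locally minimizable, contradicting minimality. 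Your final paragraph gestures at these tools, but they are used in the paper to \emph{detect} minimality, not to lift $\leq_Z$-relations across an appended singleton; the lifting you need simply does not hold.
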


The condition $\mathcal S(\pi, \tau)\neq \emptyset$ is removed in \cite{Ch22+b}. As shown in \cite{Ch21, Ch22+d}, the socle of $\pi^{(i)}$ is multiplicity-free, and the remaining problem is to describe which representations appear in the socle of $\pi^{(i)}$. The minimal sequences give a canonical way to parametrize those representations, and in other words, we have a one-to-one correspondence between the set of admissible minimal sequences to $\pi$ and the set of simple quotients of Bernstein-Zelevinsky derivatives of $\pi$.

For the two segment case, there are some other criteria for a minimal sequence such as the non-overlapping property and a condition in terms of $\eta_{\Delta}$ in Section \ref{s dagger property}. Such case is particularly useful if one wants to "split" some segments to carry out some inductive arguments. We remark that there is no uniqueness for maximal elements in general. We give an example in Section \ref{no unique max element}.

\subsection{Methods and sketch of proofs}

As mentioned above, the main tool is the removal process, and \cite[Theorem 10.2]{Ch22+d} (see Theorem \ref{thm isomorphic derivatives}) reduces to analyse combinatorics in the removal process. Let us explain a bit more how this helps to prove Theorems \ref{cor closedness property} and \ref{thm unique minimal}. 

For Theorem \ref{cor closedness property}, we use an inductive way to analyse the choices of segments in the removal process. To encode such information, we shall use a notion of fine chains (Section \ref{s intersection union derivative closed}), and its connection to the removal process is shown in Lemma \ref{lem coincide lemma}. Such fine chain is shown to possess a nice ordering compatible with the Zelevinsky ordering in Lemma \ref{lem intersection union fine sequ comp}, and this in turn allows one to deduce Theorem \ref{cor closedness property}.

For Theorem \ref{thm unique minimal}, we single out a combinatorial criterion called local minimizability in Definition \ref{def minimizable function} that allows one to study in certain inductive way. Lemmas \ref{lem inductive finding intersection} and \ref{lem maxmize intersect} allow one to find a segment to do intersection-union process without changing the resultant multisegment (the multisegment obtained by the removal process) under the local minimizability. Lemma \ref{lem maximzable and resultant} tells if one can obtain a multisegment $\mathfrak n'$ from the intersection-union process of a multisegment $\mathfrak n$ which does not change the resultant multisegment in the removal process (i.e. $\mathfrak r(\mathfrak n, \mathfrak h)=\mathfrak r(\mathfrak n',\mathfrak h)$ for some $\mathfrak h$, and see Section \ref{ss removal process} below for the notion $\mathfrak r(.,.)$), then one can obtain the local minimizability condition under some terms in the fine chain.


\subsection{Organization of this article}

Section \ref{s highest derivative removal} recalls results on highest derivative multisegments and removal processes established in \cite{Ch22+d}. Section \ref{s intersection union derivative closed} defines a notion of fine chains and fine chain orderings to facilitate comparisons with the Zelevinsky ordering. Section \ref{s closure intersect union} shows the convexity property for $\mathcal S(\pi, \tau)$. In Section \ref{ss minimizable function}, we shall introduce a notion of local minimizability, used to show the uniqueness of a minimal element in Section \ref{s unique minimal}. Section \ref{s example minimal} gives two examples of the unique minimal elements. Section \ref{no unique max element} gives an example which uniqueness of $\leq_Z$-maximality fails. Section \ref{s dagger property} studies equivalent conditions for minimality in two segment cases.

\subsection{Acknowledgements}

The author is grateful to reviewers for providing comments that largely improve the readability of the article. This project is supported by the Research Grants Council of the Hong Kong Special Administrative Region, China (Project No: 17305223, 17308324) and NSFC grant for Excellent Young Scholar (Hong Kong and Macau) (Project No.: 12322120). This manuscript has no associated data.

\section{Highest derivative multisegments and removal process} \label{s highest derivative removal}

In this section, we recall some results in \cite{Ch22+d}. 

\subsection{More notations on multisegments} \label{ss more notations on multiseg}

For an integer $c$, let $\mathrm{Mult}_{\rho,c}^a$ be the subset of $\mathrm{Mult}_{\rho}$ containing all multisegments $\mathfrak m$ such that any segment $\Delta$ in $\mathfrak m$ satisfies $a(\Delta) \cong \nu^c\rho$. The upperscript $a$ in $\mathrm{Mult}_{\rho,c}^a$  is for $a(\Delta)$.

For a multisegment $\mathfrak m$ in $\mathrm{Mult}_{\rho}$ and an integer $c$, let $\mathfrak m[c]$ be the submultisegment of $\mathfrak m$ containing all the segments $\Delta$ satisfying $a(\Delta)\cong \nu^c\rho$.

Fix an integer $c$. Let $\Delta_1=[c,b_1]_{\rho}, \Delta_2=[c,b_2]_{\rho}$ be two non-empty segments. We write $\Delta_1 \leq_c^a \Delta_2$ if $b_1 \leq b_2$, and write $\Delta_1 <_c^a \Delta_2$ if $b_1<b_2$.

For non-empty $\mathfrak m_1, \mathfrak m_2$ in $\mathrm{Mult}_{\rho, c}^a$, label the segments in $\mathfrak m_1$ as: $\Delta_{1,k} \leq_c^a   \ldots \leq_c^a \Delta_{1,2} \leq_c^a \Delta_{1,1}$ and label the segments in $\mathfrak m_2$ as: $\Delta_{2,r} \leq_c^a \ldots  \leq_c^a \Delta_{2,2}  \leq_c \Delta_{2,1}$. We define the lexicographical ordering: $\mathfrak m_1 \leq_c^a \mathfrak m_2$ if $k \leq r$ and, for any $i \leq k$, $\Delta_{1,i} \leq_c^a \Delta_{2,i}$. We write $\mathfrak m_1 <_c^a \mathfrak m_2$ if $\mathfrak m_1 \leq_c^a \mathfrak m_2$ and $\mathfrak m_1\neq \mathfrak m_2$.

\subsection{Highest derivative multisegments} \label{ss highest derivative mult}

A multisegment $\mathfrak m$ is said to be {\it at the point $\nu^c\rho$} if any segment $\Delta$ in $\mathfrak m$ takes the form $[c,b]_{\rho}$ for some $b \geq c$. For $\pi \in \mathrm{Irr}_{\rho}$, define $\mathfrak{mxpt}^a(\pi, c)$ to be the maximal multisegment in $\mathrm{Mult}^{a}_{\rho,c}$ such that $D_{\mathfrak{mxpt}^a(\pi,c)}(\pi)\neq 0$. The maximality is determined by the ordering $\leq^a_c$ defined above. Define the {\it highest derivative multisegment} of $\pi \in \mathrm{Irr}_{\rho}$ to be
\[  \mathfrak{hd}(\pi):=\sum_{c\in \mathbb Z} \mathfrak{mxpt}^a(\pi, c).
\]
It is shown in \cite{Ch22+d} that $D_{\mathfrak{hd}(\pi)}(\pi)$ is the highest derivative of $\pi$ in the sense of \cite{Ze80}.

\subsection{Removal process} \label{ss removal process}

We write $[a,b]_{\rho} \prec^L [a',b']_{\rho}$ if either $a<a'$; or $a=a'$ and $b<b'$. Here $L$ means to compare on the 'left' value $a$ and we avoid to further use $a$ for confusing with previous notations. A non-empty segment $\Delta=[a,b]_{\rho}$ is said to be {\it admissible} to a multisegment $\mathfrak h$ if there exists a segment of the form $[a,c]_{\rho}$ in $\mathfrak h$ for some $c\geq b$.  We now recall the removal process.

An {\it infinity multisegment}, denoted by $\infty$, is just a symbol which will be used to indicate the situation of non-admissibility. 

\begin{definition} \cite[Section 8.2]{Ch22+d} \label{def removal process}
Let $\mathfrak h\in \mathrm{Mult}_{\rho}$ and let $\Delta=[a,b]_{\rho}$ be admissible to $\mathfrak h$. The {\it removal process} on $\mathfrak h$ by $\Delta$ is an algorithm to carry out the following steps:
\begin{enumerate}
\item Pick a shortest segment $[a,c]_{\rho}$ in $\mathfrak h[a]$ satisfying $b \leq c$. Set $\Delta_1=[a,c]_{\rho}$. Set $a_1=a$ and $b_1=c$.
\item One recursively finds the $\prec^L$-minimal segment $\Delta_i=[a_i,b_i]_{\rho}$ in $\mathfrak h$ such that $a_{i-1}<a_i$ and $b\leq b_i<b_{i-1}$. The process stops if one can no longer find those segments.
\item Let $\Delta_1, \ldots, \Delta_r$ be all those segments. For $1\leq i<r$, define $\Delta_i^{tr}=[a_{i+1}, b_i]_{\rho}$ and $\Delta_r^{tr}=[b+1,b_r]_{\rho}$ (possibly empty).
\item Define
\[  \mathfrak r(\Delta, \mathfrak h):= \mathfrak h-\sum_{i=1}^r\Delta_i+\sum_{i=1}^r \Delta_i^{tr} .
\]
\end{enumerate} 
We call $\Delta_1, \ldots, \Delta_r$ to be the {\it removal sequence} for $(\Delta, \mathfrak h)$. We also define $\Upsilon(\Delta, \mathfrak h)=\Delta_1$, the first segment in the removal sequence. The first segment in the removal sequence will be used few times later in some inductive proofs.

One useful property of the removal sequence is that $\Delta_1 \supset \Delta_2 \supset \ldots \supset \Delta_r$, which we shall refer to the {\it nesting property} (of the removal sequence). 

If $\Delta$ is not admissible to $\mathfrak h$, we set $\mathfrak r(\Delta, \mathfrak h)=\infty$, the infinity multisegment. We also set $\mathfrak r(\Delta, \infty)=\infty$. 
\end{definition}

Some examples of removal process are provided in \cite[Section 8]{Ch22+d}.

\begin{remark}
A multisegment $\mathfrak h$ is called {\it generic} if any two segments in $\mathfrak h$ are unlinked. The special feature in this case is that $\mathrm{St}(\mathfrak h)$ is generic and $\mathfrak{hd}(\mathrm{St}(\mathfrak h))=\mathfrak h$. In such case, for $\Delta \in \mathrm{Seg}_{\rho}$, it is shown in \cite{Ch21} that $D_{\Delta}(\pi)$ is generic. On the other hand, $\mathfrak r(\Delta, \mathfrak h)$ coincides with the generic multisegment which has the same cuspidal support as $D_{\Delta}(\pi)$. 

\end{remark}

\subsection{Computations on removal process}

We recall the following properties for computations:

\begin{lemma} \label{lem removal process} \cite{Ch22+d}
Let $\mathfrak h \in \mathrm{Mult}_{\rho}$ and let $\Delta, \Delta' \in \mathrm{Seg}_{\rho}$ be admissible to $\mathfrak h$. Then the followings hold:
\begin{enumerate}
\item \cite[Lemma 8.7]{Ch22+d} Let $\mathfrak h^*=\mathfrak h-\Upsilon(\Delta, \mathfrak h)+{}^-\Upsilon(\Delta, \mathfrak h)$. Then $\mathfrak r(\Delta, \mathfrak h)=\mathfrak r({}^-\Delta, \mathfrak h^*)$.
\item \cite[Lemma 8.8]{Ch22+d} Write $\Delta=[a,b]_{\rho}$. For any $a'<a$, $\mathfrak r(\Delta, \mathfrak h)[a']=\mathfrak h[a']$.
\item \cite[Lemma 8.9]{Ch22+d} If $\Delta \in \mathfrak h$, then $\mathfrak r(\Delta, \mathfrak h)=\mathfrak h-\Delta$.
\item \cite[Lemma 8.10]{Ch22+d} Suppose $a(\Delta)=a(\Delta')$. Then 
\[  \Upsilon(\Delta, \mathfrak h)+\Upsilon(\Delta', \mathfrak r(\Delta, \mathfrak h))=\Upsilon(\Delta', \mathfrak h)+\Upsilon(\Delta, \mathfrak r(\Delta', \mathfrak h)) .
\]
\item \cite[Lemma 8.12]{Ch22+d} If $\Delta, \Delta'$ are unlinked, then $\mathfrak r(\Delta', \mathfrak r(\Delta, \mathfrak h))=\mathfrak r(\Delta, \mathfrak r(\Delta', \mathfrak h))$.
\end{enumerate}
\end{lemma}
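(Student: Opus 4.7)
The lemma collects five structural properties of the removal process, and my plan is to verify them in increasing order of subtlety, working directly from the algorithmic Definition~\ref{def removal process}. Parts~(2) and~(3) should fall out from unwinding the construction: for~(2), the removal sequence $\Delta_1, \ldots, \Delta_r$ satisfies $a = a_1 < a_2 < \cdots < a_r$ and each truncation $\Delta_i^{tr}$ has left endpoint $a_{i+1} > a$, so the sub-multisegment at any $a' < a$ is untouched; for~(3), the choice $\Delta_1 = \Delta = [a,b]_\rho$ is forced as the unique shortest segment at $a$ covering $b$, and no $\Delta_2 = [a_2, b_2]_\rho$ can satisfy $b \leq b_2 < b_1 = b$, forcing $r = 1$ and $\Delta_1^{tr} = \emptyset$.

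For part~(1), write $\Upsilon(\Delta, \mathfrak h) = [a,c]_\rho$ and $\Delta = [a,b]_\rho$. The idea is to compare the removal sequences of $(\Delta, \mathfrak h)$ and $({}^-\Delta, \mathfrak h^*)$ and show they yield identical truncations. After replacing $[a,c]_\rho$ by $[a+1,c]_\rho$ inside $\mathfrak h^*$, the first step of the new process picks the shortest segment at $a+1$ covering $[a+1, b]_\rho$. By the definition of $\Upsilon$ together with the nesting property $\Delta_1 \supset \Delta_2 \supset \cdots$, this shortest segment is either ${}^-\Upsilon(\Delta, \mathfrak h)$ itself or the segment $\Delta_2$ from the original sequence. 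A short case split then shows that the subsequent picks must coincide with $\Delta_2, \ldots, \Delta_r$ in order, so the net effect on $\mathfrak h$ agrees with $\mathfrak r(\Delta, \mathfrak h)$.

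For parts~(4) and~(5), the first step of a removal process only touches a single segment in $\mathfrak h$, so the commutativity identities reduce to comparisons about at most two segment extractions. In~(4), with $a(\Delta) = a(\Delta') = \nu^a \rho$, I will run a case analysis on the sizes of $c_1 = b(\Upsilon(\Delta, \mathfrak h))$ and $c_2 = b(\Upsilon(\Delta', \mathfrak h))$ relative to $b(\Delta)$ and $b(\Delta')$; in each case, both sides of the identity collapse to the multiset $\{[a, c_1]_\rho, [a, c_2]_\rho\}$. For~(5), unlinked means either disjoint supports or one contained in the other, and in both situations the two removal sequences act on essentially non-interacting parts of $\mathfrak h$, so commutativity follows after isolating the relevant at-point sub-multisegments.

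I expect the principal obstacle to lie in part~(1), because the shift ${}^-$ interacts non-trivially with the recursion: one must verify that replacing $\Upsilon(\Delta, \mathfrak h)$ by its shift does not introduce a new, strictly shorter candidate for $\Upsilon({}^-\Delta, \mathfrak h^*)$ which would derail the subsequent steps. Once this is settled, the remaining parts reduce to case checks kept manageable by the nesting property, which forces all relevant segment choices to be linearly ordered by inclusion.
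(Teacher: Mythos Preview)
The paper does not actually prove Lemma~\ref{lem removal process}: it is recalled verbatim from \cite{Ch22+d}, with each part carrying a citation to the corresponding lemma there. So there is no in-paper argument to compare against, and your proposal is effectively an attempt to reconstruct proofs that live in the cited reference.

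Your plans for parts~(2) and~(3) are fine, and your strategy for~(1) is essentially correct: the dichotomy ``the first pick for $({}^-\Delta,\mathfrak h^*)$ is either ${}^-\Upsilon(\Delta,\mathfrak h)$ or $\Delta_2$'' is exactly right, and in the second case the new removal sequence is $\Delta_2,\ldots,\Delta_r$ (one term shorter), with the missing truncation $\Delta_1^{tr}=[a_2,b_1]_\rho={}^-\Upsilon(\Delta,\mathfrak h)$ already sitting inside $\mathfrak h^*$. Your plan for~(4) is also sound, though the slogan ``both sides collapse to $\{[a,c_1]_\rho,[a,c_2]_\rho\}$'' is only literally true when $\Upsilon(\Delta,\mathfrak h)\neq\Upsilon(\Delta',\mathfrak h)$; when they coincide you instead get $\{\Upsilon,\text{next shortest}\}$ on each side, which still matches.

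The genuine gap is in part~(5). Your claim that for unlinked $\Delta,\Delta'$ the two removal sequences ``act on essentially non-interacting parts of $\mathfrak h$'' is false in general. Take $\mathfrak h=\{[0,10]_\rho,[5,8]_\rho\}$, $\Delta=[0,3]_\rho$, $\Delta'=[5,7]_\rho$ (unlinked, since the union is not a segment). The removal sequence for $(\Delta,\mathfrak h)$ is $[0,10]_\rho,[5,8]_\rho$, which consumes the very segment $\Upsilon(\Delta',\mathfrak h)=[5,8]_\rho$; the truncation $[5,10]_\rho$ it leaves behind then becomes the first pick for $\Delta'$. Conversely, doing $\Delta'$ first leaves the singleton $[8]_\rho$, which is then absorbed into the removal sequence for $\Delta$. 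The two orders do give the same answer $\{[8,10]_\rho,[4,8]_\rho\}$, but not because the processes are independent: the truncations of one process feed directly into the other. A correct proof of~(5) needs a real argument---for instance an induction on segment length via part~(1), or a careful bijection between the combined removal data in the two orders---and your plan does not yet supply one.
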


\subsection{Removal process for multisegments}

For $\mathfrak h \in \mathrm{Mult}_{\rho}$, and a multisegment $\mathfrak m=\left\{ \Delta_1, \ldots, \Delta_r\right\} \in \mathrm{Mult}_{\rho}$ written in an ascending order (defined in Section \ref{ss main results}), define:
\[   \mathfrak r(\mathfrak m, \mathfrak h)= \mathfrak r(\Delta_r, \ldots , \mathfrak r(\Delta_1, \mathfrak h)\ldots ) .  \]
We say that $\mathfrak m$ is {\it admissible} to $\mathfrak h$ if $\mathfrak r(\mathfrak m, \mathfrak h)\neq \infty$. 

For $\pi \in \mathrm{Irr}_{\rho}$ and $\mathfrak m \in \mathrm{Mult}_{\rho}$, define $\mathfrak r(\mathfrak m, \pi):=\mathfrak r(\mathfrak m, \mathfrak{hd}(\pi))$. The relation to derivatives is the following:

\begin{theorem} \cite[Theorem 10.2]{Ch22+d} \label{thm isomorphic derivatives} 
Let $\pi \in \mathrm{Irr}_{\rho}$. Let $\mathfrak m, \mathfrak m' \in \mathrm{Mult}_{\rho}$ be admissible to $\pi$. Then $\mathfrak m, \mathfrak m'$ are admissible to $\mathfrak{hd}(\pi)$, and furthermore, $D_{\mathfrak m}(\pi)\cong D_{\mathfrak m'}(\pi)$ if and only if $\mathfrak r(\mathfrak m, \pi)=\mathfrak r(\mathfrak m', \pi)$.
\end{theorem}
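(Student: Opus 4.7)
The plan is to establish the stronger identity
\[
\mathfrak r(\mathfrak m, \mathfrak{hd}(\pi)) \;=\; \mathfrak{hd}(D_{\mathfrak m}(\pi))
\]
for every $\mathfrak m$ admissible to $\pi$, together with the equivalence $\mathfrak r(\mathfrak m, \mathfrak{hd}(\pi)) \neq \infty$ if and only if $D_{\mathfrak m}(\pi) \neq 0$. Both halves of the theorem then follow cheaply: the admissibility transfer is immediate, and for the forward direction of the iff, $D_{\mathfrak m}(\pi) \cong D_{\mathfrak m'}(\pi)$ forces their highest derivative multisegments to agree, which reads exactly as $\mathfrak r(\mathfrak m, \pi) = \mathfrak r(\mathfrak m', \pi)$.

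To prove the key identity, I would run a double induction: outer on the number of segments of $\mathfrak m$, inner on the length of the largest segment. The outer step writes $\mathfrak m = \mathfrak m_0 + \Delta$ with $\Delta$ last in the ascending order, so that $D_{\mathfrak m}(\pi) = D_\Delta(D_{\mathfrak m_0}(\pi))$ and the removal process nests correspondingly. The inner step factors $D_\Delta = D_{{}^-\Delta} \circ D_{[a]_\rho}$ (writing $\Delta = [a,b]_\rho$) and matches this combinatorially via Lemma \ref{lem removal process}(1), which precisely states $\mathfrak r(\Delta, \mathfrak h) = \mathfrak r({}^-\Delta, \mathfrak h^*)$ with $\mathfrak h^* = \mathfrak h - \Upsilon(\Delta, \mathfrak h) + {}^-\Upsilon(\Delta, \mathfrak h)$. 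These reductions collapse onto the singleton base case $\Delta = [a]_\rho$, whose content is
\[
\mathfrak{hd}(D_{[a]_\rho}(\pi)) \;=\; \mathfrak{hd}(\pi) - \Upsilon([a]_\rho, \mathfrak{hd}(\pi)) + {}^-\Upsilon([a]_\rho, \mathfrak{hd}(\pi)).
\]
I would prove this by identifying which constituents of $\pi_{N_r}$ carry $\nu^a\rho$ on the rightmost factor, extracting $D_{[a]_\rho}(\pi)$ via the uniqueness clause in the definition of derivatives, and characterizing its $\mathfrak{mxpt}^a(\cdot, c)$ at each point $c$. The $\leq_c^a$-maximality defining $\mathfrak{mxpt}^a$ forces the shortest segment at point $\nu^a\rho$ -- which is precisely $\Upsilon([a]_\rho, \mathfrak{hd}(\pi))$ per Definition \ref{def removal process} -- to be the one shortened from the left, while Lemma \ref{lem removal process}(2) guarantees that segments at other points are untouched.

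The main obstacle is the reverse direction of the iff: $\mathfrak r(\mathfrak m, \pi) = \mathfrak r(\mathfrak m', \pi) \Rightarrow D_{\mathfrak m}(\pi) \cong D_{\mathfrak m'}(\pi)$. The key identity only matches top-layer data, and $\mathfrak{hd}$ by itself does not separate points of $\mathrm{Irr}_\rho$. My strategy is to iterate: for any auxiliary admissible multisegment $\mathfrak n$, the associativity of removal along the ascending-order convention (combined with the unlinked commutation of Lemma \ref{lem removal process}(5)) upgrades the identity to $\mathfrak{hd}(D_{\mathfrak n}(D_{\mathfrak m}(\pi))) = \mathfrak r(\mathfrak m + \mathfrak n, \pi)$, and identically for $\mathfrak m'$. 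Letting $\mathfrak n$ run through the successive highest derivative multisegments of $D_{\mathfrak m}(\pi)$ reconstructs the entire Zelevinsky multisegment of $D_{\mathfrak m}(\pi)$ purely from the $\mathfrak r$-data; since those data agree for $\mathfrak m$ and $\mathfrak m'$, the two representations must be isomorphic. Verifying that this iterated procedure genuinely yields a complete invariant, and handling the ordering bookkeeping, is where I expect the bulk of the technical work to lie.
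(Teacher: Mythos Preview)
The paper does not itself prove this theorem; it is quoted without proof from the companion article \cite[Theorem 10.2]{Ch22+d}, so there is no proof here to compare against. That said, your proposed strategy has a genuine gap that would need repair before it could stand on its own.

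The inner induction step asserts the factorisation $D_{\Delta}=D_{{}^-\Delta}\circ D_{[a]_\rho}$ for $\Delta=[a,b]_\rho$, and claims that Lemma~\ref{lem removal process}(1) is its combinatorial shadow. But these two recursions do not match. Lemma~\ref{lem removal process}(1) truncates the segment $\Upsilon(\Delta,\mathfrak h)$, namely the shortest segment in $\mathfrak h[a]$ \emph{containing $\Delta$}; your singleton base case instead truncates $\Upsilon([a]_\rho,\mathfrak h)$, the shortest segment in $\mathfrak h[a]$ outright. When these two segments differ, the representation-theoretic factorisation fails. Concretely, take $\mathfrak h=\mathfrak{hd}(\pi)=\{[0]_\rho,[0,2]_\rho\}$ and $\Delta=[0,1]_\rho$. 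Then $\Upsilon(\Delta,\mathfrak h)=[0,2]_\rho$ while $\Upsilon([0]_\rho,\mathfrak h)=[0]_\rho$; after applying $D_{[0]_\rho}$ the highest derivative multisegment becomes $\{[0,2]_\rho\}$, which has no segment with $a$-value $\nu\rho$, so $D_{[1]_\rho}\bigl(D_{[0]_\rho}(\pi)\bigr)=0$ even though $D_{[0,1]_\rho}(\pi)\neq 0$. Thus $D_\Delta\neq D_{{}^-\Delta}\circ D_{[a]_\rho}$ here, and the induction collapses.

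The actual argument in \cite{Ch22+d} (as reflected by the surrounding material, e.g.\ Theorem~\ref{thm effect of Steinberg}) does not attempt any such factorisation; it instead controls the $\varepsilon$-invariants $\varepsilon_{\Delta'}(D_\Delta(\pi))$ directly and shows they agree with $\varepsilon_{\Delta'}(\mathfrak r(\Delta,\pi))$ for the relevant range of $\Delta'$, which is enough to recover $\mathfrak{hd}(D_\Delta(\pi))$ via the definition of $\mathfrak{mxpt}^a$. If you want to salvage your induction, you would have to replace the cuspidal peel $D_{[a]_\rho}$ by a derivative that targets the \emph{correct} first segment $\Upsilon(\Delta,\mathfrak h)$; establishing that such a derivative exists and behaves as required is essentially the content of \cite[Theorem 9.3]{Ch22+d} and is not cheaper than the original proof.
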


\subsection{More relations to derivatives}

 For $\mathfrak h \in \mathrm{Mult}_{\rho}$ and $\Delta=[a,b]_{\rho} \in \mathrm{Seg}_{\rho}$, set
\[ \varepsilon_{\Delta}(\mathfrak h)=|\left\{ \widetilde{\Delta} \in \mathfrak h[a]: \Delta\subset \widetilde{\Delta} \right\}| . \]


\begin{theorem} \cite[Theorem 9.3]{Ch22+d} \label{thm effect of Steinberg}
Let $\pi \in \mathrm{Irr}_{\rho}$. Let $\Delta\in \mathrm{Seg}_{\rho}$ be admissible to $\pi$. Let $\Delta' \in \mathrm{Seg}_{\rho}$. Suppose either $a(\Delta')>a(\Delta)$; or $\Delta'$ and $\Delta$ are unlinked. Then $\varepsilon_{\Delta'}(D_{\Delta}(\pi))=\varepsilon_{\Delta'}(\mathfrak r(\Delta, \pi))$.
\end{theorem}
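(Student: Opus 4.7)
The plan is to pass from $\varepsilon_{\Delta'}(D_{\Delta}(\pi))$ to the combinatorics of iterating the removal process $\mathfrak r(\Delta',\cdot)$ on $\mathfrak r(\Delta,\pi)$, and then read the answer off directly from the multisegment $\mathfrak r(\Delta,\pi)[a(\Delta')]$. The key initial observation is that, under either of the stated hypotheses, the sequence of segments $\Delta,\Delta',\ldots,\Delta'$ (with $k$ copies of $\Delta'$) is in ascending order: distinct copies of $\Delta'$ are trivially unlinked, and the pair $\{\Delta,\Delta'\}$ is either unlinked or satisfies $a(\Delta)<a(\Delta')$. Consequently
\[
(D_{\Delta'})^{k}\circ D_{\Delta}(\pi)\;=\;D_{k\Delta'+\Delta}(\pi).
\]
By Theorem \ref{thm isomorphic derivatives} together with its underlying admissibility formalism, this representation is nonzero precisely when $\mathfrak r(k\Delta'+\Delta,\mathfrak{hd}(\pi))\neq\infty$, and by the inductive definition of $\mathfrak r(\cdot,\cdot)$ on multisegments the latter equals $\mathfrak r(k\Delta',\mathfrak r(\Delta,\pi))$. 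Hence the theorem reduces to the combinatorial identity
\[
\max\{\,k\geq 0:\mathfrak r(k\Delta',\mathfrak h)\neq\infty\,\}\;=\;\varepsilon_{\Delta'}(\mathfrak h)
\]
applied to $\mathfrak h=\mathfrak r(\Delta,\pi)$.

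For this identity, set $a':=a(\Delta')$ and inspect the removal sequence $\Delta_{1},\ldots,\Delta_{r}$ for the pair $(\Delta',\mathfrak h)$. By construction $\Delta_{1}=\Upsilon(\Delta',\mathfrak h)$ lies in $\mathfrak h[a']$ and contains $\Delta'$, while every $\Delta_{i}$ with $i\geq 2$ and every truncation $\Delta_{i}^{tr}$ starts at a position strictly larger than $a'$ (for $\Delta_{r}^{tr}=[b'+1,b_{r}]_{\rho}$ this uses $b'\geq a'$). Reading off the $a'$-component from Definition \ref{def removal process} therefore gives
\[
\mathfrak r(\Delta',\mathfrak h)[a']\;=\;\mathfrak h[a']-\Upsilon(\Delta',\mathfrak h).
\]
Since $\Upsilon(\Delta',\mathfrak h)$ is one of the segments in $\mathfrak h[a']$ containing $\Delta'$, one concludes $\varepsilon_{\Delta'}(\mathfrak r(\Delta',\mathfrak h))=\varepsilon_{\Delta'}(\mathfrak h)-1$ whenever the left-hand side is defined, and $\mathfrak r(\Delta',\mathfrak h)=\infty$ exactly when $\varepsilon_{\Delta'}(\mathfrak h)=0$. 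A short induction on $k$ then yields the identity.

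The main obstacle I expect lies in the very first step, namely in securing the equivalence between $D_{k\Delta'+\Delta}(\pi)\neq 0$ and $\mathfrak r(k\Delta'+\Delta,\mathfrak{hd}(\pi))\neq\infty$ in both directions. Theorem \ref{thm isomorphic derivatives} as stated gives one direction (admissibility to $\pi$ implies admissibility to $\mathfrak{hd}(\pi)$); the reverse is part of the admissibility formalism developed in \cite{Ch22+d} and must be invoked carefully. Once that equivalence is in place, the remainder of the argument is a direct unpacking of Definition \ref{def removal process} combined with the nesting property $\Delta_{1}\supset\ldots\supset\Delta_{r}$, neither of which presents any serious difficulty.
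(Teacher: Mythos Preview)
The paper does not prove this statement: it is quoted verbatim as \cite[Theorem 9.3]{Ch22+d} and used as a black box. So there is no proof in the present paper to compare against.

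That said, your argument has a structural problem worth flagging. You deduce the result from Theorem~\ref{thm isomorphic derivatives}, which is \cite[Theorem 10.2]{Ch22+d}. In the source paper \cite{Ch22+d} the theorem you are trying to prove is Theorem~9.3, and it precedes Theorem~10.2; it is in fact one of the ingredients feeding into the proof of the latter. So invoking Theorem~\ref{thm isomorphic derivatives} here is, at the level of the original development, circular. If your goal is merely to verify the statement within the logical framework of the \emph{present} paper (where both results are imported as given), then of course there is no circularity; but as an independent proof of \cite[Theorem 9.3]{Ch22+d} this does not stand on its own.

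You also correctly identify the second obstacle yourself: Theorem~\ref{thm isomorphic derivatives} as stated only asserts that admissibility to $\pi$ implies admissibility to $\mathfrak{hd}(\pi)$, not the converse, and you need both directions of the equivalence $D_{\mathfrak m}(\pi)\neq 0 \Leftrightarrow \mathfrak r(\mathfrak m,\mathfrak{hd}(\pi))\neq\infty$. This converse is indeed established in \cite{Ch22+d}, but again its proof there goes through the material of Section~9, so the same circularity concern applies.

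The purely combinatorial portion of your argument --- the identity $\max\{k:\mathfrak r(k\Delta',\mathfrak h)\neq\infty\}=\varepsilon_{\Delta'}(\mathfrak h)$ via $\mathfrak r(\Delta',\mathfrak h)[a']=\mathfrak h[a']-\Upsilon(\Delta',\mathfrak h)$ --- is clean and correct.
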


\section{Fine Chains} \label{s intersection union derivative closed}

Recall that $\mathcal S(\pi, \tau)$ is defined in Section \ref{ss main results}. We introduce a notion of fine chains in Definition \ref{def fine chain}  in order to give an effective comparison of the effect of two removal processes (Lemma \ref{lem coincide lemma}).

\subsection{A basic idea}

Let $\mathfrak h \in \mathrm{Mult}_{\rho}$. We consider two segments in the form $\Delta=[a,b]_{\rho}$ and $\Delta'=[a+1, b']_{\rho}$ for $b<b'$. In such case, let $\widetilde{\Delta}=\Delta \cup \Delta'=[a,b']_{\rho}$ and $\widetilde{\Delta}'=\Delta\cap \Delta'=[a+1,b]_{\rho}$. Let 
\[   \mathfrak h^*=\mathfrak h-\Upsilon(\Delta, \mathfrak h)+{}^-\Upsilon(\Delta, \mathfrak h), \quad \widetilde{\mathfrak h}=\mathfrak h-\Upsilon(\widetilde{\Delta}, \mathfrak h)+{}^-\Upsilon(\widetilde{\Delta}, \mathfrak h) .\]
 Applying Lemma \ref{lem removal process}, we have
\[  \mathfrak r(\left\{ \Delta', \Delta\right\}, \mathfrak h) =\mathfrak r(\left\{ \Delta', {}^-\Delta\right\}, \mathfrak h^*)
\]
and
\[  \mathfrak r(\left\{ \widetilde{\Delta}', \widetilde{\Delta}\right\}, \mathfrak h)=\mathfrak r(\left\{ \widetilde{\Delta}', {}^-\widetilde{\Delta} \right\}, \widetilde{\mathfrak h}) .
\]
Note that $\left\{ \Delta', {}^-\Delta\right\}=\left\{ \widetilde{\Delta}', {}^-\widetilde{\Delta} \right\}$. The following statements are equivalent:
\begin{enumerate}
\item $\mathfrak r(\left\{ \Delta, \Delta' \right\}, \mathfrak h) = \mathfrak r(\{ \widetilde{\Delta}, \widetilde{\Delta}'\}, \mathfrak h)$;
\item $\mathfrak h^*=\widetilde{\mathfrak h}$;
\item $\Upsilon(\Delta, \mathfrak h)=\Upsilon(\widetilde{\Delta}, \mathfrak h)$.
\end{enumerate}

The general case for the effect of intersection-union processes needs some modifications for the above consideration and we shall focus on the condition (3). In particular, Sections \ref{ss removal segments} and \ref{ss fine chain} will improve Lemma \ref{lem removal process}(1) to do some multiple 'cutting-off' on starting points.

\subsection{Multiple removal of starting points} \label{ss removal segments}

Let $\mathfrak h, \mathfrak n  \in \mathrm{Mult}_{\rho}$. Let $a$ be the smallest integer such that $\mathfrak n[a]\neq \emptyset$. Write $\mathfrak n[a]=\left\{\Delta_1, \ldots, \Delta_k \right\}$.

\begin{enumerate}
\item Suppose $\mathfrak n[a]$ is admissible to $\mathfrak h$. Let $\mathfrak r_i=\mathfrak r(\left\{ \Delta_{i}, \ldots, \Delta_1\right\}, \mathfrak h)$ and $\mathfrak r_0=\mathfrak h$. 
Define 
\[ \mathfrak{fs}(\mathfrak n, \mathfrak h)=\left\{ \Upsilon(\Delta_1,\mathfrak r_0), \ldots, \Upsilon(\Delta_k, \mathfrak r_{k-1})  \right\} .\]
($\mathfrak{fs}$ refers to first segments.)
\item Suppose $\mathfrak n[a]$ is not admissible to $\mathfrak h$. Define $\mathfrak{fs}(\mathfrak n, \mathfrak h)=\emptyset$.
\end{enumerate}

\begin{lemma} \label{lem independent ordering}
The notion $\mathfrak{fs}(\mathfrak n, \mathfrak h)$ is well-defined i.e. independent of an ordering for the segments in $\mathfrak n[a]$.
\end{lemma}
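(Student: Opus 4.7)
The plan is to show that $\mathfrak{fs}(\mathfrak n, \mathfrak h)$ is invariant under every adjacent transposition of the list $\Delta_1, \ldots, \Delta_k$, and then invoke the fact that any permutation is a product of such transpositions. The key observation is that all segments of $\mathfrak n[a]$ share the same starting value $\nu^a\rho$, so any two of them are nested (one contained in the other) and hence unlinked. This makes Lemma \ref{lem removal process}(5) applicable to any pair, and makes Lemma \ref{lem removal process}(4) applicable to compare the ``first segments'' produced in two consecutive steps.

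First I would treat admissibility: since all pairs in $\mathfrak n[a]$ are unlinked, Lemma \ref{lem removal process}(5) applied repeatedly shows that $\mathfrak r(\{\Delta_1,\ldots,\Delta_k\},\mathfrak h)$ is independent of the ordering, so the admissibility of $\mathfrak n[a]$ to $\mathfrak h$ is also independent of the ordering; in particular the two cases in the definition of $\mathfrak{fs}$ are selected in the same way for any ordering. In the non-admissible case the statement is trivial, so from now on I assume $\mathfrak n[a]$ is admissible.

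Next I would consider two orderings differing by a single adjacent swap at position $i$: one produces the list of first segments using $(\Delta_1,\ldots,\Delta_i,\Delta_{i+1},\ldots,\Delta_k)$ and the other using $(\Delta_1,\ldots,\Delta_{i+1},\Delta_i,\ldots,\Delta_k)$. The first $i-1$ entries of $\mathfrak{fs}$ produced are identical, as they only depend on $\Delta_1,\ldots,\Delta_{i-1}$, and the intermediate multisegment $\mathfrak r_{i-1}$ is the same in both orderings. For the two swapped steps, Lemma \ref{lem removal process}(4) applied with the common starting point $\nu^a\rho$ gives
\[
\Upsilon(\Delta_i,\mathfrak r_{i-1}) + \Upsilon(\Delta_{i+1},\mathfrak r(\Delta_i,\mathfrak r_{i-1})) = \Upsilon(\Delta_{i+1},\mathfrak r_{i-1}) + \Upsilon(\Delta_i,\mathfrak r(\Delta_{i+1},\mathfrak r_{i-1})),
\]
which is exactly the equality (as multisets of size two) of the pairs of first segments contributed at steps $i$ and $i+1$ under the two orderings. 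Moreover, by Lemma \ref{lem removal process}(5) (applied to the unlinked pair $\Delta_i,\Delta_{i+1}$), after performing both removals the resulting multisegment is the same; hence the subsequent intermediate multisegments $\mathfrak r_{i+1},\ldots,\mathfrak r_{k-1}$ and the remaining entries of $\mathfrak{fs}$ also coincide.

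Combining these observations, the multiset $\mathfrak{fs}(\mathfrak n,\mathfrak h)$ is unchanged under adjacent transpositions, so independent of the ordering chosen. The only subtle point, and the main bookkeeping one, is to read the additive identity in Lemma \ref{lem removal process}(4) as a multiset identity and to confirm that the two swapped removal steps leave the downstream state $\mathfrak r_{i+1}$ unchanged; both follow directly from the unlinkedness of segments sharing a starting point.
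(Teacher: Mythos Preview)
Your proposal is correct and follows essentially the same approach as the paper: the paper's proof is the one-line ``switch a consecutive pair of segments each time, and then apply Lemmas \ref{lem removal process}(4) and \ref{lem removal process}(5),'' which is precisely your adjacent-transposition argument with the same two lemmas. Your write-up simply makes the bookkeeping explicit (unlinkedness of segments with common starting point, preservation of the downstream state, the admissibility check), which is a faithful expansion of the paper's sketch.
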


\begin{proof}
One switches a consecutive pair of segments each time, and then applies Lemmas \ref{lem removal process}(4) and \ref{lem removal process}(5).
\end{proof}

Following from definitions, we also have:

\begin{lemma} \label{lem lemma depend on a}
With the notations as above,
\[  \mathfrak{fs}(\mathfrak n, \mathfrak h)=\mathfrak{fs}(\mathfrak n[a], \mathfrak h)=\mathfrak{fs}(\mathfrak n[a], \mathfrak h[a]).
\]
\end{lemma}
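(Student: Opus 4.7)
The first equality $\mathfrak{fs}(\mathfrak n, \mathfrak h)=\mathfrak{fs}(\mathfrak n[a], \mathfrak h)$ is essentially immediate from the definition: by construction $\mathfrak{fs}(\mathfrak n, \mathfrak h)$ only uses the segments in $\mathfrak n[a]$ (where $a$ is the smallest integer with $\mathfrak n[a]\neq \emptyset$), and applying the definition to $\mathfrak n[a]$ in place of $\mathfrak n$ produces the same list of $\Upsilon$-terms. The only thing to check is that admissibility of $\mathfrak n[a]$ to $\mathfrak h$ in the sense of the definition of $\mathfrak{fs}$ is the same in both cases, which is a tautology.

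The substantive claim is the second equality $\mathfrak{fs}(\mathfrak n[a], \mathfrak h)=\mathfrak{fs}(\mathfrak n[a], \mathfrak h[a])$. My plan is to prove by induction on $i$ that, writing $\mathfrak r_i=\mathfrak r(\{\Delta_i,\ldots,\Delta_1\}, \mathfrak h)$ and $\mathfrak r_i'=\mathfrak r(\{\Delta_i,\ldots,\Delta_1\}, \mathfrak h[a])$, we have
\[
\mathfrak r_i[a]=\mathfrak r_i'[a]\quad \text{and}\quad \Upsilon(\Delta_{i+1},\mathfrak r_i)=\Upsilon(\Delta_{i+1},\mathfrak r_i').
\]
The second of these follows from the first because $\Upsilon(\Delta_{i+1},\cdot)$ is by construction the shortest segment in the $(\cdot)[a]$-part containing $\Delta_{i+1}$ (recall $a(\Delta_{i+1})=\nu^a\rho$).

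The main step is therefore to verify the identity
\begin{equation*}
\mathfrak r(\Delta, \mathfrak h)[a]=\mathfrak h[a]-\Upsilon(\Delta,\mathfrak h)
\end{equation*}
for any $\Delta=[a,b]_{\rho}$ admissible to $\mathfrak h$. This is where I would invoke the removal process definition directly: in the removal sequence $\Delta_1,\ldots,\Delta_r$ associated to $(\Delta,\mathfrak h)$, only $\Delta_1=\Upsilon(\Delta,\mathfrak h)$ has starting point $a$ (since by step (2) of Definition \ref{def removal process} the starting points satisfy $a=a_1<a_2<\cdots<a_r$), and each truncation $\Delta_i^{tr}$ has starting point either $a_{i+1}>a$ or $b+1>a$. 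Hence restricting the formula in step (4) of Definition \ref{def removal process} to point $a$ yields exactly $\mathfrak h[a]-\Upsilon(\Delta,\mathfrak h)$.

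Combining the boxed identity with the inductive hypothesis $\mathfrak r_{i-1}[a]=\mathfrak r_{i-1}'[a]$, one gets
\[
\mathfrak r_i[a]=\mathfrak r_{i-1}[a]-\Upsilon(\Delta_i,\mathfrak r_{i-1})=\mathfrak r_{i-1}'[a]-\Upsilon(\Delta_i,\mathfrak r_{i-1}')=\mathfrak r_i'[a],
\]
which closes the induction and gives equality of both $\mathfrak{fs}$-sets term by term. The only potential obstacle is a degenerate case to keep in mind: if at some intermediate stage $\Delta_{i+1}$ fails to be admissible to $\mathfrak r_i$, I need admissibility to fail simultaneously for $\mathfrak r_i'$, but this is again controlled by $\mathfrak r_i[a]=\mathfrak r_i'[a]$ since admissibility of a segment starting at $a$ only tests $(\cdot)[a]$. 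No real obstacle is expected beyond carefully bookkeeping starting points in the removal sequence.
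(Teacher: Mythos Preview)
Your proposal is correct and matches the paper's approach: the paper asserts the lemma ``follows from definitions'' without further detail, and your argument is exactly a careful unpacking of those definitions, tracking that $\Upsilon(\Delta,\cdot)$ for $\Delta\in\mathfrak n[a]$ depends only on the $(\cdot)[a]$-part and that the removal process affects the $[a]$-part only by deleting $\Upsilon(\Delta,\mathfrak h)$.
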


We define a truncation of $\mathfrak h$:
\begin{align} \label{eqn truncate multiple points}
 \mathfrak{trr}(\mathfrak n, \mathfrak h) =\mathfrak h-\mathfrak{fs}(\mathfrak n, \mathfrak h) +{}^-(\mathfrak{fs}(\mathfrak n, \mathfrak h)) ,
\end{align}
and a truncation of $\mathfrak n$:
\[ \mathfrak{trd}(\mathfrak n, \mathfrak h)=\mathfrak n-\mathfrak n[a]+{}^-(\mathfrak n[a]). 
\]
Here we use $r$ in $\mathfrak{trr}$ for the derivative 'resultant' multisegment and $d$ for $\mathfrak{trd}$ for 'taking the derivative for the multisegment $\mathfrak n$'.

\begin{example}
Let $\mathfrak h=\left\{ [0,3]_{\rho}, [1,2]_{\rho},[1,4]_{\rho}, [1,5]_{\rho}, [2,3]_{\rho} \right\}$. Let $\mathfrak n=\left\{ [1,3]_{\rho}, [1,5]_{\rho}, [2]_{\rho} \right\}$. 

\[ \xymatrix{ &     & \stackrel{2}{ \bullet}  \ar@{-}[r]   &  \stackrel{3}{ \bullet}    &        &    & \\
             & \stackrel{1}{ \bullet}  \ar@{-}[r]  &   \stackrel{2}{ \bullet}     &    &    &     &   \\
              &  \stackrel{1}{ {\color{red} \bullet}}  \ar@{-}[r]   &  \stackrel{2}{ \bullet}   \ar@{-}[r]   &  \stackrel{3}{ \bullet} \ar@{-}[r]    &   \stackrel{4}{\bullet}    &     &     \\     
					   &  \stackrel{1}{ {\color{red} \bullet}}  \ar@{-}[r]    &   \stackrel{2}{ \bullet}  \ar@{-}[r]   &  \stackrel{3}{ \bullet}  \ar@{-}[r]   &  \stackrel{4}{\bullet} \ar@{-}[r]    &   \stackrel{5}{\bullet}   &     \\ 	
		 \stackrel{0}{\bullet}  \ar@{-}[r]												   &  \stackrel{1}{ \bullet}  \ar@{-}[r]    &   \stackrel{2}{ \bullet}  \ar@{-}[r]   &  \stackrel{3}{ \bullet}     &      &    & }
\]
The two red bullets in $\mathfrak h$ are 'truncated' to obtain $\mathfrak{trr}(\mathfrak n, \mathfrak h)=\left\{[0,3]_{\rho}, [1,2]_{\rho}, [2,4]_{\rho}, [2,5]_{\rho}, [2,3]_{\rho} \right\}$. We also have $\mathfrak{fs}(\mathfrak n, \mathfrak h)=\left\{ [1,4]_{\rho}, [1,5]_{\rho} \right\}$ and $\mathfrak{trd}(\mathfrak n, \mathfrak h)=\left\{ [2,3]_{\rho}, [2,5]_{\rho}, [2]_{\rho} \right\}$. 
\end{example}

\begin{lemma} \label{lem multiple truncate} (multiple removal of starting points, c.f. Lemma \ref{lem removal process}(1))
Let $\mathfrak n, \mathfrak h, a$ be as above. 
Then 
\[  \mathfrak r(\mathfrak n, \mathfrak h)=\mathfrak r(\mathfrak{trd}(\mathfrak n, \mathfrak h), \mathfrak{trr}(\mathfrak n, \mathfrak h)) .
\]
\end{lemma}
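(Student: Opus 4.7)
The plan is to establish the identity by iteratively applying Lemma \ref{lem removal process}(1), truncating one segment of $\mathfrak n[a]$ at a time, and to arrange the argument so that a compatibility between $\mathfrak r_j$ and the partial truncation of $\mathfrak h$ can be exploited. Write $\mathfrak n[a]=\{\Delta_1,\ldots,\Delta_k\}$ and fix the ordering so that $b(\Delta_1)\leq b(\Delta_2)\leq\cdots\leq b(\Delta_k)$; this choice is permissible because, by Lemma \ref{lem independent ordering}, $\mathfrak{fs}(\mathfrak n,\mathfrak h)$ is independent of the ordering within $\mathfrak n[a]$. Let $\mathfrak s_i=\Upsilon(\Delta_i,\mathfrak r_{i-1})$ as in the definition of $\mathfrak{fs}$. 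For $0\leq j\leq k$, introduce the partial truncations
\[
\mathfrak{trr}^{(j)}=\mathfrak h-\{\mathfrak s_1,\ldots,\mathfrak s_j\}+\{{}^-\mathfrak s_1,\ldots,{}^-\mathfrak s_j\},\quad \mathfrak{trd}^{(j)}=\mathfrak n-\{\Delta_1,\ldots,\Delta_j\}+\{{}^-\Delta_1,\ldots,{}^-\Delta_j\},
\]
which interpolate between $(\mathfrak n,\mathfrak h)$ at $j=0$ and $(\mathfrak{trd}(\mathfrak n,\mathfrak h),\mathfrak{trr}(\mathfrak n,\mathfrak h))$ at $j=k$.

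The core claim, proved by induction on $j$, is $\mathfrak r(\mathfrak n,\mathfrak h)=\mathfrak r(\mathfrak{trd}^{(j)},\mathfrak{trr}^{(j)})$. The base case $j=0$ is tautological. For the inductive step, I would process $\Delta_{j+1}$ first inside $\mathfrak{trd}^{(j)}$: it still sits at the smallest starting point $a$, since the added segments ${}^-\Delta_1,\ldots,{}^-\Delta_j$ lie at $a+1$ and the remaining segments of $\mathfrak n$ lie at starting points strictly above $a$. The key observation is that, inspecting the removal process, each step $\mathfrak r_{i-1}\to\mathfrak r_i$ removes exactly the segment $\mathfrak s_i$ from $\mathfrak r_{i-1}[a]$ and adds only segments at starting points strictly greater than $a$ (by the nesting property of the removal sequence, all subsequent segments $\Delta_2^{(i)},\ldots,\Delta_{r_i}^{(i)}$ and all truncations $(\Delta_l^{(i)})^{tr}$ start above $a$); consequently $\mathfrak r_j[a]=\mathfrak{trr}^{(j)}[a]=\mathfrak h[a]-\{\mathfrak s_1,\ldots,\mathfrak s_j\}$. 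Since $\Upsilon(\Delta_{j+1},-)$ depends only on the $[a]$-part, we obtain $\Upsilon(\Delta_{j+1},\mathfrak{trr}^{(j)})=\mathfrak s_{j+1}$, and a single application of Lemma \ref{lem removal process}(1) yields
\[
\mathfrak r(\Delta_{j+1},\mathfrak{trr}^{(j)})=\mathfrak r({}^-\Delta_{j+1},\mathfrak{trr}^{(j+1)}).
\]

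The remaining task is to commute the outer processing of ${}^-\Delta_{j+1}$ past the segments $\Delta_{j+2},\ldots,\Delta_k$ so that it merges into the ascending list $\mathfrak{trd}^{(j+1)}$. This is precisely where the $b$-increasing ordering pays off: since $b(\Delta_l)\geq b(\Delta_{j+1})$ for $l>j+1$, the segment ${}^-\Delta_{j+1}=[a+1,b(\Delta_{j+1})]_{\rho}$ is contained in $\Delta_l=[a,b(\Delta_l)]_{\rho}$, hence unlinked with it (the singleton case $\Delta_{j+1}=[a]_{\rho}$ is vacuous since then ${}^-\Delta_{j+1}=\emptyset$). Lemma \ref{lem removal process}(5) then legitimizes the desired swaps, closing the induction; the case $j=k$ gives the lemma.

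The main obstacle will be identifying the correct ordering of $\mathfrak n[a]$ at the outset. An arbitrary ordering can produce linked pairs $({}^-\Delta_{j+1},\Delta_l)$ that obstruct the commutation via Lemma \ref{lem removal process}(5), whereas the ordering by increasing right endpoint guarantees nestedness. One must then invoke Lemma \ref{lem independent ordering} to confirm that this specific ordering leaves $\mathfrak{fs}(\mathfrak n,\mathfrak h)$, and hence both $\mathfrak{trr}(\mathfrak n,\mathfrak h)$ and the statement of the lemma, intact.
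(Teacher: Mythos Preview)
Your argument is correct and follows essentially the same approach as the paper's proof: both order $\mathfrak n[a]$ so that the shortest segment is processed first, apply Lemma~\ref{lem removal process}(1) to peel off the starting point, and use Lemma~\ref{lem removal process}(5) to commute the resulting ${}^-\Delta$ past the longer remaining segments in $\mathfrak n[a]$ (which contain it and are therefore unlinked). The paper packages this as an induction on $k=|\mathfrak n[a]|$, invoking the inductive hypothesis on $\{\overline{\Delta}_2,\ldots,\overline{\Delta}_k\}$ inside $\mathfrak h_1^*$, whereas you unfold the same recursion into explicit partial truncations $\mathfrak{trr}^{(j)},\mathfrak{trd}^{(j)}$ and induct on $j$; your ``key observation'' that $\mathfrak r_j[a]=\mathfrak{trr}^{(j)}[a]$ is exactly what the paper uses implicitly when identifying $\mathfrak{trr}(\{\overline{\Delta}_2,\ldots,\overline{\Delta}_k\},\mathfrak h_1^*)$ with $\mathfrak{trr}(\mathfrak n,\mathfrak h)$.
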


\begin{proof}

Write $\mathfrak n[a]=\left\{\overline{\Delta}_1, \ldots, \overline{\Delta}_k\right\} $. Suppose that claim holds for positive integers less than $k$, where the basic
case is given by Lemma \ref{lem removal process}(1). Relabeling if necessary, $\overline{\Delta}_1$ is the shortest segment in $\mathfrak n[a]$. Let
\[  \mathfrak h_1^*=\mathfrak h- \overline{\Delta}_1+ {}^-\overline{\Delta}_1  .\]
We observe that:
\begin{align*}
 \mathfrak r(\mathfrak n[a], \mathfrak h)
&= \mathfrak r(\left\{ \overline{\Delta}_2, \ldots, \overline{\Delta}_k \right\}, \mathfrak r(\overline{\Delta}_1, \mathfrak h)) \\
& =\mathfrak r(\left\{ \overline{\Delta}_2, \ldots, \overline{\Delta}_k \right\}, \mathfrak r({}^-\overline{\Delta}_1, \mathfrak h_1^*)) \\
&= \mathfrak r({}^-\overline{\Delta}_1, \mathfrak r(\left\{ \overline{\Delta}_2, \ldots, \overline{\Delta}_k \right\}, \mathfrak h_1^*)) \\
&= \mathfrak r({}^-\overline{\Delta}_1, \mathfrak r(\left\{ {}^-\overline{\Delta}_2, \ldots, {}^-\overline{\Delta}_k \right\}, \mathfrak{trr}(\mathfrak n, \mathfrak h))) \\
&= \mathfrak r({}^-(\mathfrak n[a]), \mathfrak{trr}(\mathfrak n, \mathfrak h)) ,
\end{align*}
where the second equation follows from Lemma \ref{lem removal process}(1), the first, third and last equations follow from Lemma \ref{lem removal process}(5), and the fourth equation follows from the induction hypothesis.

The lemma then follows by applying $\mathfrak r(\mathfrak n-\mathfrak n[a],.)$ on the first and last terms.
\end{proof}

\subsection{Fine chains} \label{ss fine chain}

\begin{definition} \label{def fine chain}
For $c \in \mathbb Z$, we modify the ordering $<_c^a$ on $\mathrm{Mult}^a_{\rho,c} \cup \left\{ \infty\right\}$ as follows. For $\mathfrak p_1$, $\mathfrak p_2$ in $\mathrm{Mult}^a_{\rho,c} \cup \left\{ \infty \right\}$, if $\mathfrak p_1\neq \infty$ and $\mathfrak p_2=\infty$, we also write $\mathfrak p_1 <_c^a \mathfrak p_2$. If $\mathfrak p_1=\mathfrak p_2=\infty$, we write $\mathfrak p_1 \leq_c^a \mathfrak p_2$.

\end{definition}

\begin{definition} \label{def fine chain seq} (Collections of first segments in the removal sequence)
Let $\mathfrak h, \mathfrak n \in \mathrm{Mult}_{\rho}$. Set $\mathfrak n_0=\mathfrak n$ and $\mathfrak h_0=\mathfrak h$. We recursively define:
\[  \mathfrak h_i=\mathfrak{trr}(\mathfrak h_{i-1}, \mathfrak n_{i-1}), \quad \mathfrak n_i=\mathfrak{trd}(\mathfrak h_{i-1}, \mathfrak n_{i-1}) .
\]
The sequence of multisegments
\[  \mathfrak{fs}(\mathfrak n_0, \mathfrak h_0), \mathfrak{fs}(\mathfrak n_1, \mathfrak h_1), \ldots
\]
is called the {\it fine chain} for $(\mathfrak n, \mathfrak h)$. Since we usually fix $\mathfrak h$ and vary $\mathfrak n$ in our use of fine chains, we shall denote the fine chain by $\mathrm{fc}_{\mathfrak h}(\mathfrak n)$. It follows from the definition that 
\[ \mathfrak{fs}(\mathfrak n_i, \mathfrak h_i), \mathfrak{fs}(\mathfrak n_{i+1}, \mathfrak h_{i+1}), \ldots \]
 is also the fine chain for $(\mathfrak n_i, \mathfrak h_i)$. 
\end{definition}

\begin{example}
Let $\mathfrak h=\left\{ [0,4]_{\rho}, [1,5]_{\rho} \right\}$.
\begin{itemize}
\item Let $\mathfrak n=\left\{ [0,1]_{\rho}, [1,2]_{\rho} \right\}$. Then the fine chain for $(\mathfrak n, \mathfrak h)$ takes the form:
\[  \left\{ [0,4]_{\rho} \right\}, \left\{ [1,4]_{\rho}, [1,5]_{\rho} \right\}, \left\{ [2,4]_{\rho} \right\}. 
\]
\item Let $\mathfrak n=\left\{ [0,2]_{\rho}, [1]_{\rho} \right\}$. Then the fine chain for $(\mathfrak n, \mathfrak h)$ is the same as the previous one.
\end{itemize}
\end{example}

\begin{definition}
Let $\mathfrak h \in \mathrm{Mult}_{\rho}$. We say that two fine chains $\mathrm{fc}_{\mathfrak h}(\mathfrak n)$ and $\mathrm{fc}_{\mathfrak h}(\mathfrak n')$ {\it coincide} if 
\begin{enumerate}
\item $\mathfrak r(\mathfrak n, \mathfrak h), \mathfrak r(\mathfrak n', \mathfrak h)\neq \infty$; and
\item the two sequences $\mathrm{fc}_{\mathfrak h}(\mathfrak n)$ and $\mathrm{fc}_{\mathfrak h}(\mathfrak n')$ are equal.
\end{enumerate} 
\end{definition}

\begin{lemma} \label{lem coincide lemma}
Let $\mathfrak h \in \mathrm{Mult}_{\rho}$. Let $\mathfrak n, \mathfrak n' \in \mathrm{Mult}_{\rho}$. Then $\mathfrak r(\mathfrak n, \mathfrak h)=\mathfrak r(\mathfrak n', \mathfrak h) \neq \infty$ if and only if the fine chains $\mathrm{fc}_{\mathfrak h}(\mathfrak n)$ and $\mathrm{fc}_{\mathfrak h}(\mathfrak n')$ coincide.
\end{lemma}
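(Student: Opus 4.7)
The plan is to use Lemma \ref{lem multiple truncate} iteratively to align the fine chain with the removal process. Along the sequence $(\mathfrak n_i, \mathfrak h_i)$ of Definition \ref{def fine chain seq}, repeated application of Lemma \ref{lem multiple truncate} yields $\mathfrak r(\mathfrak n, \mathfrak h) = \mathfrak r(\mathfrak n_i, \mathfrak h_i)$ for every $i$; since each step strictly shrinks the leftmost column of $\mathfrak n_i$, eventually $\mathfrak n_I = \emptyset$ for some $I$ and $\mathfrak r(\mathfrak n, \mathfrak h) = \mathfrak h_I$. In this way the fine chain serves as an algorithmic record of the removal process.

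For the direction ``coincidence $\Rightarrow$ equal $\mathfrak r$-values'', since $\mathfrak h_{i+1}$ depends only on $\mathfrak h_i$ and $\mathfrak{fs}(\mathfrak n_i, \mathfrak h_i)$, equal fine chains force $\mathfrak h_i = \mathfrak h_i'$ for all $i$. The admissibility clauses in the definition of coincide, combined with the observation that $\mathfrak n_i \neq \emptyset$ together with $\mathfrak{fs}(\mathfrak n_i, \mathfrak h_i) = \emptyset$ would propagate to $\mathfrak r(\mathfrak n, \mathfrak h) = \infty$, imply that $\mathfrak n_i$ and $\mathfrak n_i'$ reach $\emptyset$ at the same index $I$; hence $\mathfrak r(\mathfrak n, \mathfrak h) = \mathfrak h_I = \mathfrak h_I' = \mathfrak r(\mathfrak n', \mathfrak h)$.

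For the reverse direction, by the same recursion it suffices to prove $\mathfrak{fs}(\mathfrak n, \mathfrak h) = \mathfrak{fs}(\mathfrak n', \mathfrak h)$, since Lemma \ref{lem multiple truncate} then transports the equality of $\mathfrak r$-values to $(\mathfrak n_1, \mathfrak h_1)$ and $(\mathfrak n_1', \mathfrak h_1')$ and one recurses. The key intermediate identity I will establish is
\[ \mathfrak{fs}(\mathfrak n, \mathfrak h) = \mathfrak h[a] - \mathfrak r(\mathfrak n, \mathfrak h)[a], \]
where $a$ is the smallest starting point in $\mathfrak n$. To prove it, I will rewrite $\mathfrak r(\mathfrak n, \mathfrak h) = \mathfrak r(\mathfrak{trd}(\mathfrak n, \mathfrak h), \mathfrak{trr}(\mathfrak n, \mathfrak h))$ via Lemma \ref{lem multiple truncate}; since every segment of $\mathfrak{trd}(\mathfrak n, \mathfrak h)$ has starting point strictly greater than $a$, the multisegment extension of Lemma \ref{lem removal process}(2) (a routine induction on the number of segments) shows that the outer removal preserves the $[a]$-part, so $\mathfrak r(\mathfrak n, \mathfrak h)[a] = \mathfrak{trr}(\mathfrak n, \mathfrak h)[a] = \mathfrak h[a] - \mathfrak{fs}(\mathfrak n, \mathfrak h)$. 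Applying the identity also to $\mathfrak n'$ with its own smallest starting point $a'$, the equality $\mathfrak r(\mathfrak n, \mathfrak h) = \mathfrak r(\mathfrak n', \mathfrak h)$ together with the non-emptiness of $\mathfrak{fs}$ at the smallest starting point first forces $a = a'$ (otherwise, for $a < a'$, one reads off $\mathfrak r(\mathfrak n', \mathfrak h)[a] = \mathfrak h[a]$ while $\mathfrak r(\mathfrak n, \mathfrak h)[a]$ strictly loses the non-empty $\mathfrak{fs}(\mathfrak n, \mathfrak h)$), and then gives $\mathfrak{fs}(\mathfrak n, \mathfrak h) = \mathfrak{fs}(\mathfrak n', \mathfrak h)$.

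The main obstacle is the intermediate identity above, and in particular the multisegment extension of Lemma \ref{lem removal process}(2): one must verify that applying $\mathfrak r(\Delta, \cdot)$ with $a(\Delta) > a''$ leaves the $[a'']$-part invariant throughout an ascending sequence of segment applications. Once this identity is in hand, both directions of the lemma reduce to careful bookkeeping with Lemma \ref{lem multiple truncate}.
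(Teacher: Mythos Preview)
Your proposal is correct and follows essentially the same route as the paper's proof: both directions rest on Lemma~\ref{lem removal process}(2) (iterated over the ascending sequence) together with Lemma~\ref{lem multiple truncate}, and your key identity $\mathfrak{fs}(\mathfrak n,\mathfrak h)=\mathfrak h[a]-\mathfrak r(\mathfrak n,\mathfrak h)[a]$ is precisely what the paper compresses into the single phrase ``Lemma~\ref{lem removal process}(2) implies that $\mathfrak h_i=\mathfrak h_i'$ for all $i$.'' Your version is more explicit in justifying $a=a'$ and in handling termination, but the underlying mechanism is identical.
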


\begin{proof}
We write fine chains $\mathrm{fc}_{\mathfrak h}(\mathfrak n)$ and $\mathrm{fc}_{\mathfrak h}(\mathfrak n')$ with notations in Definition \ref{def fine chain seq} as:
\[  \mathfrak{fs}(\mathfrak n_0, \mathfrak h_0), \mathfrak{fs}(\mathfrak n_1, \mathfrak h_1), \ldots
\]
and
\[  \mathfrak{fs}(\mathfrak n_0', \mathfrak h_0'), \mathfrak{fs}(\mathfrak n_1', \mathfrak h_1'), \ldots
\]
with $\mathfrak n_0=\mathfrak n$, $\mathfrak n_0'=\mathfrak n'$, $\mathfrak h_0=\mathfrak h_0'=\mathfrak h$. 

For the only if direction, Lemma \ref{lem removal process}(2) implies that $\mathfrak h_i=\mathfrak h_i'$ for all $i$. Then, from the construction of $\mathfrak h_i$ and $\mathfrak h_i'$, we have that $\mathfrak{fs}(\mathfrak n_{i-1}, \mathfrak h_{i-1})=\mathfrak{fs}(\mathfrak n_{i-1}', \mathfrak h_{i-1}')$. In other words, the fine chains for $(\mathfrak n, \mathfrak h)$ and $(\mathfrak n', \mathfrak h)$ coincide.

For the if direction, since the fine chains coincide, we must have $\mathfrak h_i=\mathfrak h_i'$ by (\ref{eqn truncate multiple points}). In particular, $\mathfrak r(\mathfrak n, \mathfrak h)=\mathfrak r(\mathfrak n', \mathfrak h)$ as desired.
\end{proof}

\subsection{Fine chain ordering}

Multisegments $\mathfrak n$ and $\mathfrak n'$ are said to be of the {\it same cuspidal support} if $\cup_{\Delta \in \mathfrak n} \Delta=\cup_{\Delta \in \mathfrak n'} \Delta$ (counting multiplicities). 

\begin{definition}
Let $\mathfrak n, \mathfrak n' \in \mathrm{Mult}_{\rho}$ be of the same cuspidal support. Let $\mathfrak h\in \mathrm{Mult}_{\rho}$. Suppose $\mathfrak r(\mathfrak n, \mathfrak h)\neq \infty$ and $\mathfrak r(\mathfrak n', \mathfrak h)\neq \infty$. Write $\mathrm{fc}_{\mathfrak h}(\mathfrak n)$ as $\mathfrak{s}_1, \mathfrak{s}_2, \ldots$ and write $\mathrm{fc}_{\mathfrak h}(\mathfrak n')$ as $\mathfrak{s}_1', \mathfrak{s}_2', \ldots$. Similar to the notations in Definition \ref{def fine chain seq}, set inductively $\mathfrak n_i=\mathfrak n_{i-1}-\mathfrak{s}_i+{}^-\mathfrak{s}_i$ and $\mathfrak n_i'=\mathfrak n'_{i-1}-\mathfrak{s}'_i+{}^-\mathfrak{s}'_i$, where $\mathfrak n_0=\mathfrak n$ and $\mathfrak n_0'=\mathfrak n'$. Let $c_i$ (resp. $c_i'$) be the smallest integer such that $\mathfrak n_i[c_i]\neq \emptyset$ (resp. $\mathfrak n'_i[c_i']\neq \emptyset$).

 We define $\mathfrak n <^{fc} \mathfrak n'$, called the {\it fine chain ordering}, if there exists some $i$ such that for any $j<i$, $\mathfrak{s}_j=\mathfrak{s}_j'$ and  
\[ \mathfrak{s}_i <^a_{c_{i-1}} \mathfrak{s}_i' . \] 
We write $\mathfrak n \leq^{fc} \mathfrak n'$ if either $\mathfrak n <^{fc}\mathfrak n'$ or $\mathrm{fc}_{\mathfrak h}(\mathfrak n)=\mathrm{fc}_{\mathfrak h}(\mathfrak n')$. Note that $\leq^{fc}$ is transitive.
\end{definition}

\section{Closure under intersection-union process} \label{s closure intersect union}

The main result in this section is Theorem \ref{thm closed under zelevinsky}, which proves convex structure of $\mathcal S(\pi, \tau)$. Theorem \ref{thm isomorphic derivatives} transfers the problem on convexity to some explicit combinatorics on the removal process.


\begin{lemma} \label{lem single comparison}
Let $\mathfrak h \in \mathrm{Mult}_{\rho}$. Let $\mathfrak m_1$ be in $\mathrm{Mult}^a_{\rho,c}$. Let $\mathfrak m_2 \in \mathrm{Mult}^a_{\rho, c}$ be obtained from $\mathfrak m_1$ by replacing one segment in $\mathfrak m_1$ with a longer segment of the form $[c,b]_{\rho}$ for some $b \in \mathbb Z$. Then 
\[ \mathfrak{fs}(\mathfrak m_1, \mathfrak h) \leq_c^a \mathfrak{fs}(\mathfrak m_2, \mathfrak h). 
\]
\end{lemma}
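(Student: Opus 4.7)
The plan is to reduce the lex comparison to a single-segment inequality between $\Upsilon$ values, and then lift that inequality by a sorting argument. By Lemma \ref{lem lemma depend on a} I may assume $\mathfrak h = \mathfrak h[c]$, since the removal process accesses only segments in $\mathfrak h[c]$ when all input segments start at $c$. Write $\mathfrak m_1 = \mathfrak m_0 + \Delta$ where $\Delta=[c,b]_\rho$ is the segment being replaced, and $\mathfrak m_2 = \mathfrak m_0 + \Delta'$ with $\Delta'=[c,b']_\rho$ the longer segment ($b' \geq b$). Using the ordering-independence from Lemma \ref{lem independent ordering}, I will process the segments of $\mathfrak m_0$ first, which produces a common submultiset $\mathfrak{fs}(\mathfrak m_0, \mathfrak h)$ and a common intermediate multisegment $\mathfrak h^\ast := \mathfrak r(\mathfrak m_0, \mathfrak h)$. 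The multisegment $\mathfrak{fs}(\mathfrak m_i,\mathfrak h)$ is then obtained by appending $\Upsilon(\Delta^{(i)}, \mathfrak h^\ast)$, with $\Delta^{(1)} = \Delta$ and $\Delta^{(2)} = \Delta'$.

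The core step is the single-segment inequality $\Upsilon(\Delta, \mathfrak h^\ast) \leq_c^a \Upsilon(\Delta', \mathfrak h^\ast)$, which is immediate from Definition \ref{def removal process}: $\Upsilon(\Delta, \mathfrak h^\ast)$ is the shortest segment $[c,c^\ast]_\rho$ in $\mathfrak h^\ast[c]$ with $c^\ast \geq b$, while $\Upsilon(\Delta', \mathfrak h^\ast)$ is the shortest such with $c^\ast \geq b'$. Since $b' \geq b$, the candidate pool for $\Delta'$ is contained in the candidate pool for $\Delta$, so the minimum over the smaller pool is no shorter than the minimum over the larger one.

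Finally, to pass from this single-element replacement to the lex ordering on multisegments, I will apply the elementary sorting fact: if a multiset $B$ is obtained from a multiset $A$ of the same cardinality by replacing one element $\Xi$ with a $\leq_c^a$-larger element $\Xi'$, then $A \leq_c^a B$. Sorting $A$ in descending length order as $A_1 \geq \cdots \geq A_k$ and locating the insertion position $j' \leq j$ of $\Xi'$ (where $\Xi = A_j$), the sorted $B$ has its $i$-th entry weakly longer than $A_i$ at every position; the verification is a short case check over the ranges $i < j'$ (entries agree), $i = j'$ (replaced by $\Xi' \geq A_{j'}$), $j' < i \leq j$ (shift-to-previous, weakly larger by descending order), and $i > j$ (entries agree). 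Applying this with $\Xi = \Upsilon(\Delta, \mathfrak h^\ast)$ and $\Xi' = \Upsilon(\Delta', \mathfrak h^\ast)$ yields the desired conclusion. The one place requiring care is the admissibility boundary: if one of the two multisegments fails to be admissible then the convention $\mathfrak{fs} = \emptyset$ kicks in, and this edge case will be handled by a short separate check aligned with how the lemma is invoked in subsequent sections.
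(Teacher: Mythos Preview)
Your proposal is correct and follows essentially the same approach as the paper's proof: both use Lemma \ref{lem independent ordering} to place the differing segment last, observe that the common prefix $\mathfrak m_0$ contributes identical first segments, and then compare the single remaining $\Upsilon$-values via the minimality clause in Definition \ref{def removal process}. Your write-up is more explicit about the final sorting step needed to pass from a one-element replacement to the componentwise inequality $\leq_c^a$, which the paper leaves implicit; otherwise the arguments coincide.
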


\begin{proof}
The only difference between $\mathfrak m_1$ and $\mathfrak m_2$ is in one segment. We can arrange those segments to be the last ones in the process of obtaining $\mathfrak{fs}(\mathfrak m_1, \mathfrak h)$ and $\mathfrak{fs}(\mathfrak m_2, \mathfrak h)$ respectively, by Lemma \ref{lem independent ordering}. Thus the only difference between $\mathfrak{fs}(\mathfrak m_1, \mathfrak h)$ and $\mathfrak{fs}(\mathfrak m_2, \mathfrak h)$ is only one segment. For such ones, the first segment produced for $\mathfrak m_2$ is longer than such produced for $\mathfrak m_1$. The other first segments produced for $\mathfrak m_1$ and $\mathfrak m_2$ coincide. Hence, we then have $\mathfrak{fs}(\mathfrak m_1, \mathfrak h) \leq_c^a \mathfrak{fs}(\mathfrak m_2, \mathfrak h)$. 
\end{proof}

\begin{lemma} \label{lem intersection union fine sequ comp}
Let $\mathfrak h \in \mathrm{Mult}_{\rho}$. Fix $\mathfrak n \in \mathrm{Mult}_{\rho}$. Let $\mathcal N=\mathcal N(\mathfrak n)$ be the set of all multisegments of the same cuspidal support as $\mathfrak n$. Then, for $\mathfrak n', \mathfrak n'' \in \mathcal N$,
\[   \mathfrak n' \leq_Z \mathfrak n'' \quad \Longrightarrow \quad \mathfrak n'' \leq^{fc} \mathfrak n' .
\]

\end{lemma}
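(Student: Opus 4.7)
The plan is to reduce to a single elementary intersection-union operation (using transitivity of $\leq^{fc}$ and the fact that $\leq_Z$ is generated by such operations) and then induct on the total length $\sum_{\Delta \in \mathfrak n''} |\Delta|$. So I assume
\[
\mathfrak n' = \mathfrak n'' - \Delta - \Delta' + \widetilde\Delta + \widetilde\Delta',
\]
where $\Delta = [a,b]_\rho$ and $\Delta' = [a', b']_\rho$ are linked with $a < a'$ (forcing $b < b'$ and $a' \leq b + 1$), and $\widetilde\Delta = [a, b']_\rho$, $\widetilde\Delta' = [a', b]_\rho$ (empty when $a' = b + 1$). Let $c_0$ be the smallest integer with $\mathfrak n''[c_0] \neq \emptyset$; since cuspidal supports agree, this equals the analogous value for $\mathfrak n'$. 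The argument splits on whether $a > c_0$ or $a = c_0$.

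\textbf{Case $a > c_0$.} Then $\mathfrak n'[c_0] = \mathfrak n''[c_0]$, so by \leref{lemma depend on a} the first terms of the two fine chains agree, and in particular $\mathfrak h_1' = \mathfrak h_1''$. Since $\mathfrak{trd}$ acts only on segments starting at $c_0$, it leaves $\Delta, \Delta', \widetilde\Delta, \widetilde\Delta'$ untouched, so $\mathfrak n_1'$ is obtained from $\mathfrak n_1''$ by exactly the same elementary intersection-union on $\Delta, \Delta'$. The total length has strictly decreased, so the induction hypothesis (with $\mathfrak h_1$ in place of $\mathfrak h$) yields $\mathfrak n_1'' \leq^{fc} \mathfrak n_1'$; combined with the coincidence of first terms, $\mathfrak n'' \leq^{fc} \mathfrak n'$.

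\textbf{Case $a = c_0$.} Then $\mathfrak n'[c_0]$ differs from $\mathfrak n''[c_0]$ only by replacing $\Delta = [c_0, b]_\rho$ with the strictly longer $\widetilde\Delta = [c_0, b']_\rho$. \leref{single comparison} gives $\mathfrak{fs}(\mathfrak n'', \mathfrak h) \leq^a_{c_0} \mathfrak{fs}(\mathfrak n', \mathfrak h)$. Strict inequality immediately yields $\mathfrak n'' <^{fc} \mathfrak n'$. In the equality case, $\mathfrak h_1' = \mathfrak h_1''$, and I track the pair one layer deeper: applying $^-$ to segments starting at $c_0$ turns $\Delta, \Delta'$ into $^-\Delta = [c_0+1, b]_\rho$ and the unchanged $\Delta'$, and turns $\widetilde\Delta, \widetilde\Delta'$ into $^-\widetilde\Delta = [c_0+1, b']_\rho$ and the unchanged $\widetilde\Delta'$. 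If $a' > c_0 + 1$, then $^-\Delta, \Delta'$ remain linked and $\mathfrak n_1'$ is obtained from $\mathfrak n_1''$ by the elementary intersection-union on this new pair (with $^-\widetilde\Delta, \widetilde\Delta'$ as union and intersection), so induction applies. If $a' = c_0 + 1$, direct verification gives $^-\widetilde\Delta = \Delta'$ and $\widetilde\Delta' = {}^-\Delta$, so $\mathfrak n_1' = \mathfrak n_1''$ and the fine chains coincide trivially.

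The main obstacle is the equality case of \leref{single comparison}, where the first term of the fine chain does not distinguish $\mathfrak n''$ from $\mathfrak n'$. One must then trace the elementary intersection-union through one layer of the fine chain and verify the dichotomy $a' > c_0 + 1$ versus $a' = c_0 + 1$: the former descends to another elementary move (enabling induction), while the latter is exactly the collision in which the altered pair reconstitutes the original pair, producing identical multisegments at the next layer. The use of the length induction, together with \leref{lemma depend on a} and the compatibility of $\mathfrak{trd}$ with segments above $c_0$, is what makes the inductive structure persist across both cases.
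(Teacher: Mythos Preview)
Your proof is correct and follows essentially the same route as the paper: both reduce to a single elementary intersection--union and then track the difference through the fine chain level by level, invoking \leref{single comparison} at the first level where the two multisegments differ. Your version makes explicit, via induction on total length and the case split $a>c_0$ versus $a=c_0$ (with the further dichotomy $a'=c_0+1$ versus $a'>c_0+1$), exactly what the paper compresses into the sentence ``it is straightforward to see from the intersection--union operation that $\mathfrak n_i''[c_i]$ is obtained from $\mathfrak n_i'[c_i]$ by replacing a segment with a longer one.''
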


\begin{proof}
By the transitivity of $\leq_Z$, we reduce to the case that $\mathfrak n'$ is obtained from $\mathfrak n''$ by an elementary intersection-union operation. Let $\Delta_1$ and $\Delta_2$ be the two linked segments involved in the elementary intersection-union operation. Relabeling if necessary, we write:
\[ \Delta_1=[a_1, b_1]_{\rho}, \quad \Delta_2=[a_2, b_2]_{\rho},
\]
with $a_1<a_2$ and $b_1 < b_2$. 

We again write $\mathrm{fc}_{\mathfrak h}(\mathfrak n')$ as $\mathfrak s_1', \mathfrak s_2', \ldots$ and write $\mathrm{fc}_{\mathfrak h}(\mathfrak n'')$ as $\mathfrak s_1'', \mathfrak s_2'', \ldots$. Similar to notations in Definition \ref{def fine chain seq}, set $\mathfrak n_i'=\mathfrak n_{i-1}'-\mathfrak s'_i+{}^-\mathfrak s_i'$ and $\mathfrak n_i''=\mathfrak n_{i-1}''-\mathfrak s''_i+{}^-\mathfrak s_i''$. Again let $c_i$ be the smallest integer such tht $\mathfrak n_i'[c_i]\neq \emptyset$.  It is straighforward to see from the intersection-union operation that $\mathfrak n_i''[c_i]$ is obtained from $\mathfrak n_i'[c_i]$ by replacing a segment with a longer one (of the form $[c_i,b]_{\rho}$). Thus now Lemma \ref{lem single comparison} implies that $\mathfrak n'' \leq^{fc} \mathfrak n'$.
\end{proof}

\begin{theorem} \label{thm multisegment intersect union}
Let $\mathfrak n, \mathfrak n' \in \mathrm{Mult}_{\rho}$. Suppose $\mathfrak n' \leq_Z \mathfrak n$. Let $\mathfrak n'' \in \mathrm{Mult}_{\rho}$ such that 
\[ \mathfrak n' \leq_Z \mathfrak n'' \leq_Z \mathfrak n.
\]
Then, if $\mathfrak r(\mathfrak n, \mathfrak h)=\mathfrak r(\mathfrak n', \mathfrak h)$, then $\mathfrak r(\mathfrak n, \mathfrak h)=\mathfrak r(\mathfrak n'', \mathfrak h)$.

\end{theorem}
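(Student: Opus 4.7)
The plan is to recast the equality $\mathfrak r(\mathfrak n, \mathfrak h) = \mathfrak r(\mathfrak n', \mathfrak h)$ as coincidence of fine chains via Lemma \ref{lem coincide lemma}, and then to use Lemma \ref{lem intersection union fine sequ comp} to sandwich $\mathfrak n''$ in the fine chain ordering between $\mathfrak n$ and $\mathfrak n'$.

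First, I would apply Lemma \ref{lem coincide lemma} to the hypothesis $\mathfrak r(\mathfrak n, \mathfrak h) = \mathfrak r(\mathfrak n', \mathfrak h) \neq \infty$, obtaining that the fine chains $\mathrm{fc}_{\mathfrak h}(\mathfrak n)$ and $\mathrm{fc}_{\mathfrak h}(\mathfrak n')$ coincide. Since elementary intersection-union operations preserve the cuspidal support, all of $\mathfrak n, \mathfrak n'', \mathfrak n'$ lie in the common set $\mathcal N(\mathfrak n)$. Applying Lemma \ref{lem intersection union fine sequ comp} to the relations $\mathfrak n'' \leq_Z \mathfrak n$ and $\mathfrak n' \leq_Z \mathfrak n''$ respectively then yields
\[
\mathfrak n \leq^{fc} \mathfrak n'' \leq^{fc} \mathfrak n' .
\]
The ordering $\leq^{fc}$ is lexicographic on the components of the fine chain, so if either of the two inequalities above were strict, the composite would give $\mathfrak n <^{fc} \mathfrak n'$, contradicting $\mathrm{fc}_{\mathfrak h}(\mathfrak n) = \mathrm{fc}_{\mathfrak h}(\mathfrak n')$. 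Therefore $\mathrm{fc}_{\mathfrak h}(\mathfrak n'') = \mathrm{fc}_{\mathfrak h}(\mathfrak n)$, and a second invocation of Lemma \ref{lem coincide lemma} gives $\mathfrak r(\mathfrak n'', \mathfrak h) = \mathfrak r(\mathfrak n, \mathfrak h)$, as desired.

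The main obstacle I anticipate is a technical one: the fine chain ordering $\leq^{fc}$ is defined only when both sides are admissible, so the squeeze above presumes $\mathfrak r(\mathfrak n'', \mathfrak h) \neq \infty$ before it can be invoked literally. To handle this, I would decompose a $\leq_Z$-chain from $\mathfrak n$ down to $\mathfrak n'$ into elementary intersection-union operations passing through $\mathfrak n''$, and then prove by induction along this chain that each intermediate multisegment is admissible to $\mathfrak h$ and has the same fine chain as $\mathfrak n$. The inductive step relies on Lemma \ref{lem single comparison} together with the extended ordering on $\mathrm{Mult}^a_{\rho, c} \cup \{\infty\}$ from Definition \ref{def fine chain}: if at any point an intermediate were non-admissible, the corresponding entry of its fine chain would strictly exceed (in the extended $<^a_c$) the admissible fine chain of $\mathfrak n'$ sitting at the bottom, which would break the lexicographic squeeze.
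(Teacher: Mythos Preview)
Your proposal is correct and follows essentially the same route as the paper: both arguments pass through Lemma~\ref{lem coincide lemma} to translate equality of $\mathfrak r(\cdot,\mathfrak h)$ into coincidence of fine chains, invoke Lemma~\ref{lem intersection union fine sequ comp} on the two Zelevinsky inequalities to obtain the sandwich $\mathfrak n \leq^{fc} \mathfrak n'' \leq^{fc} \mathfrak n'$, and then use transitivity together with $\mathrm{fc}_{\mathfrak h}(\mathfrak n)=\mathrm{fc}_{\mathfrak h}(\mathfrak n')$ to force equality throughout. The paper runs this as a proof by contradiction rather than a direct squeeze, but the logical content is identical. Your extra paragraph on the admissibility of $\mathfrak n''$ is a point the paper leaves implicit; the paper's intention is that the extended ordering of Definition~\ref{def fine chain} (allowing $\infty$ as a top element) absorbs this case, and your inductive argument along an elementary chain makes that precise.
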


\begin{proof}
If $\mathfrak r(\mathfrak n, \mathfrak h) \neq \mathfrak r(\mathfrak n'', \mathfrak h)$, Lemmas \ref{lem coincide lemma} and \ref{lem intersection union fine sequ comp} imply that $\mathfrak n <^{fc} \mathfrak n''$. By Lemma \ref{lem intersection union fine sequ comp} again, $\mathfrak n<_Z \mathfrak n'$. Now the transitivity of $<^{fc}$ implies that $\mathfrak n <^{fc} \mathfrak n'$. However, Lemma \ref{lem coincide lemma} then implies $\mathfrak r(\mathfrak n', \mathfrak h) \neq \mathfrak r(\mathfrak n, \mathfrak h)$, giving a contradiction.
\end{proof}

We translate the combinatorial statement in Theorem \ref{thm multisegment intersect union} to its representation-theoretic counterpart:

\begin{theorem} \label{thm closed under zelevinsky}
Let $\pi \in \mathrm{Irr}_{\rho}$ and let $\tau$ be a simple quotient of $\pi^{(i)}$. Let $\mathfrak n, \mathfrak n' \in \mathcal S(\pi, \tau)$ with $\mathfrak n' \leq_Z \mathfrak n$. For any $\mathfrak n'' \in \mathrm{Mult}_{\rho}$ such that $\mathfrak n' \leq_Z \mathfrak n'' \leq_Z \mathfrak n$, we have $\mathfrak n'' \in \mathcal S(\pi, \tau)$. 
\end{theorem}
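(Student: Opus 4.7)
The plan is to translate the statement into its combinatorial counterpart via the dictionary provided by Theorem \ref{thm isomorphic derivatives}, and then apply Theorem \ref{thm multisegment intersect union} directly. Writing $\mathfrak h = \mathfrak{hd}(\pi)$, the hypothesis $\mathfrak n, \mathfrak n' \in \mathcal S(\pi, \tau)$ gives $D_{\mathfrak n}(\pi) \cong D_{\mathfrak n'}(\pi) \cong \tau$, so by Theorem \ref{thm isomorphic derivatives} both multisegments are admissible to $\mathfrak h$ and
\[ \mathfrak r(\mathfrak n, \mathfrak h) = \mathfrak r(\mathfrak n', \mathfrak h) \neq \infty. \]
Applying Theorem \ref{thm multisegment intersect union} to the chain $\mathfrak n' \leq_Z \mathfrak n'' \leq_Z \mathfrak n$ then yields $\mathfrak r(\mathfrak n'', \mathfrak h) = \mathfrak r(\mathfrak n, \mathfrak h)$; in particular $\mathfrak n''$ is admissible to $\mathfrak h$.

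To invoke the ``if'' direction of Theorem \ref{thm isomorphic derivatives} and conclude $D_{\mathfrak n''}(\pi) \cong D_{\mathfrak n}(\pi) \cong \tau$, I still need $\mathfrak n''$ to be admissible to $\pi$, i.e.\ $D_{\mathfrak n''}(\pi) \neq 0$. I would establish this by induction along an elementary chain $\mathfrak n = \mathfrak m_0 \geq_Z \mathfrak m_1 \geq_Z \cdots \geq_Z \mathfrak m_r = \mathfrak n''$ of intersection-union operations realizing $\mathfrak n'' \leq_Z \mathfrak n$. Each intermediate $\mathfrak m_i$ lies inside the $\leq_Z$-interval between $\mathfrak n'$ and $\mathfrak n$, so Theorem \ref{thm multisegment intersect union} gives $\mathfrak r(\mathfrak m_i, \mathfrak h) = \mathfrak r(\mathfrak m_{i-1}, \mathfrak h)$ at every step. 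Assuming inductively that $\mathfrak m_{i-1}$ is admissible to $\pi$, one deduces $D_{\mathfrak m_i}(\pi) \neq 0$ from a direct single-step analysis on Jacquet modules, together with the irreducible-or-zero dichotomy for iterated derivatives on representations in $\mathrm{Irr}_\rho$; Theorem \ref{thm isomorphic derivatives} then promotes this to $D_{\mathfrak m_i}(\pi) \cong D_{\mathfrak m_{i-1}}(\pi) \cong \tau$, and iterating down the chain gives $\mathfrak n'' \in \mathcal S(\pi, \tau)$.

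The main obstacle is precisely this admissibility bootstrap, i.e.\ upgrading the combinatorial condition $\mathfrak r(\mathfrak n'', \mathfrak h) \neq \infty$ to the representation-theoretic condition $D_{\mathfrak n''}(\pi) \neq 0$. The combinatorial machinery of Sections \ref{s intersection union derivative closed} and \ref{s closure intersect union} provides everything else essentially for free; the admissibility transfer is the one piece that requires input from the Jacquet-module side rather than from pure bookkeeping with removal processes.
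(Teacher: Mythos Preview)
Your approach is exactly the paper's: the proof there is a single sentence citing Theorem~\ref{thm isomorphic derivatives} and Theorem~\ref{thm multisegment intersect union}, which is precisely your first paragraph.

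The admissibility issue you flag is real as far as the statement of Theorem~\ref{thm isomorphic derivatives} recorded \emph{in this paper} goes, but the paper is tacitly using the full content of \cite[Theorem~10.2]{Ch22+d}: there, admissibility of $\mathfrak m$ to $\pi$ is \emph{equivalent} to admissibility of $\mathfrak m$ to $\mathfrak{hd}(\pi)$ (i.e.\ $D_{\mathfrak m}(\pi)\neq 0 \iff \mathfrak r(\mathfrak m,\mathfrak{hd}(\pi))\neq\infty$). Once you grant that, Theorem~\ref{thm multisegment intersect union} gives $\mathfrak r(\mathfrak n'',\mathfrak h)=\mathfrak r(\mathfrak n,\mathfrak h)\neq\infty$, hence $\mathfrak n''$ is admissible to $\pi$ immediately, and Theorem~\ref{thm isomorphic derivatives} finishes. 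Your inductive chain argument is therefore unnecessary; it is also not complete as written, since the ``direct single-step analysis on Jacquet modules'' you invoke is never specified and is not obviously easier than the equivalence you are trying to avoid.
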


\begin{proof}

This follows from Theorem \ref{thm isomorphic derivatives} and Theorem \ref{thm multisegment intersect union}.
\end{proof}

We also have the following combinatorial consequence:

\begin{corollary}
We use the notations as in Lemma \ref{lem intersection union fine sequ comp}. Let 
\[ \widetilde{\mathcal N}  := \left\{ \mathfrak n \in \mathcal N : \mathfrak r(\mathfrak n, \mathfrak h) \neq \infty \right\} .
\]
We define an equivalence relation $\sim$ on $\widetilde{\mathcal N}$ by: $\mathfrak n \sim \mathfrak n'$ if and only if $\mathfrak r(\mathfrak n, \mathfrak h)=\mathfrak r(\mathfrak n', \mathfrak h)$. Define $\preceq_Z$ on $\widetilde{\mathcal N}/ \sim$ by: for $N, N' \in \widetilde{\mathcal N} / \sim$, write $N \preceq_Z N'$ if there exists $\mathfrak n \in N$ and $\mathfrak n' \in N'$ such that $\mathfrak n \leq_Z \mathfrak n'$. We similarly define the notion $\preceq^{fc}$ on $\widetilde{\mathcal N}$ by replacing $\leq_Z$ with $\leq^{fc}$. Then, the following holds:
\begin{itemize}
\item Both $\preceq_Z$ and $\preceq^{fc}$ define a well-defined poset structure on $\widetilde{\mathcal N}/\sim$. 
\item The identity map on $\widetilde{\mathcal N}/\sim$ induces an order-reversing map between $ (\widetilde{\mathcal N}/\sim, \preceq_Z)$ and $(\widetilde{\mathcal N}/\sim, \preceq^{fc})$. 
\end{itemize}
\end{corollary}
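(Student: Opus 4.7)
The plan is to transfer the poset structure of $\preceq^{fc}$, which is manifestly well-behaved, to $\preceq_Z$ via the order-reversing correspondence supplied by Lemma~\ref{lem intersection union fine sequ comp}.

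First I would verify that $\preceq^{fc}$ descends to a well-defined partial order on $\widetilde{\mathcal N}/\sim$. By Lemma~\ref{lem coincide lemma}, the fine chain $\mathrm{fc}_{\mathfrak h}(\mathfrak n)$ is a complete invariant of each $\sim$-equivalence class, so, since $\leq^{fc}$ is defined purely in terms of fine chains, it descends unambiguously. Reflexivity is immediate; transitivity is recorded right after the definition of $\leq^{fc}$; and antisymmetry follows because mutual $\leq^{fc}$ forces the two fine chains to coincide, whence Lemma~\ref{lem coincide lemma} yields $\sim$-equivalence. The order-reversing direction is then immediate: any witness $\mathfrak n \leq_Z \mathfrak n'$ for $N \preceq_Z N'$ yields $\mathfrak n' \leq^{fc} \mathfrak n$ by Lemma~\ref{lem intersection union fine sequ comp}, hence $N' \preceq^{fc} N$. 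This settles the second bullet.

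Turning to the poset structure of $\preceq_Z$: reflexivity is trivial, and antisymmetry follows by applying the order-reversing direction twice and invoking the antisymmetry of $\preceq^{fc}$ just established.

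The hard part will be transitivity of $\preceq_Z$. Given $N_1 \preceq_Z N_2 \preceq_Z N_3$, the order-reversing step only supplies $N_3 \preceq^{fc} N_1$, and converting this back into a Zelevinsky-comparable pair between $N_1$ and $N_3$ is not automatic, because the existential definition of $\preceq_Z$ only produces witnesses in $N_2$ that are generally two \emph{distinct} representatives of the class. My plan is to upgrade the order-reversing direction into the biconditional $N \preceq_Z N' \Leftrightarrow N' \preceq^{fc} N$; once this biconditional is available, transitivity of $\preceq_Z$ is inherited from that of $\preceq^{fc}$. To prove the converse $N' \preceq^{fc} N \Rightarrow N \preceq_Z N'$, I would start from $\mathfrak m' := \mathfrak n_{\min}(N')$, the unique $\leq_Z$-minimum provided by the combinatorial counterpart of Theorem~\ref{thm unique minimal} (transported via Theorem~\ref{thm isomorphic derivatives}), and inductively perform intersection-union operations on $\mathfrak m'$ driven by the first position at which the fine chains of $N$ and $N'$ disagree. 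Each such operation moves strictly downward in $\leq_Z$, and Theorem~\ref{thm multisegment intersect union} (convexity) controls exactly when the equivalence class is preserved; by selecting the linked pair to act on so that the fine chain changes by exactly the prescribed entry, one lands after finitely many steps on an element $\mathfrak m \in N$ with $\mathfrak m \leq_Z \mathfrak m'$, as required.
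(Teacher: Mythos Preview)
Your treatment of the $\preceq^{fc}$ poset axioms, of antisymmetry for $\preceq_Z$, and of the second bullet is correct and matches the paper's reasoning; the paper simply says the only non-evident part is antisymmetry and cites Lemmas~\ref{lem coincide lemma} and~\ref{lem intersection union fine sequ comp}. Where you diverge is transitivity of $\preceq_Z$: the paper treats it as one of the ``evident'' parts and offers no argument, while you (correctly) observe that the existential definition allows the two witnesses in the middle class to be distinct representatives, so transitivity does not follow formally from transitivity of $\leq_Z$ alone.

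Your proposed repair, however, has a genuine gap. Upgrading the order-reversal to the biconditional $N \preceq_Z N' \Leftrightarrow N' \preceq^{fc} N$ would indeed yield transitivity, but your sketch of the backward implication is not supported by the tools in hand. Starting from $\mathfrak n_{\min}(N')$ and performing elementary intersection--union operations ``so that the fine chain changes by exactly the prescribed entry'' presupposes a controlled inverse to Lemma~\ref{lem intersection union fine sequ comp}: that any admissible increase of a term $\mathfrak s_i$ in a fine chain is realisable by some elementary operation on the underlying multisegment. Nothing in the paper establishes this; Lemmas~\ref{lem single comparison} and~\ref{lem intersection union fine sequ comp} only show that an elementary operation pushes the fine chain weakly upward, not that one can steer it to a specified target, and Theorem~\ref{thm multisegment intersect union} (convexity) controls when the class is \emph{preserved}, which is the opposite of what you need. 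Note also that $\leq^a_c$ on $\mathrm{Mult}^a_{\rho,c}$ is not a total order, so ``moving toward $\mathrm{fc}_{\mathfrak h}(N)$'' is not a single well-defined direction. Finally, you invoke the unique $\leq_Z$-minimum of each class (Proposition~\ref{prop unique combin}/Theorem~\ref{thm unique minimal}), which is proved only later in the paper; while not circular, it lies outside the toolkit the corollary is meant to draw on. In short: you have identified a point the paper glosses over, but your argument for the missing direction does not close the gap.
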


\begin{proof}
For the first bullet, the only non-evident part is the antisymmetry, which indeed follows from Lemmas \ref{lem coincide lemma} and \ref{lem intersection union fine sequ comp}. The second bullet is a direct consequence of Lemma \ref{lem intersection union fine sequ comp}.
\end{proof}

\section{Minimizability} \label{ss minimizable function}

\subsection{Basic example on minimality}

\begin{example} \label{ex non-minimal two segment case}
Let $\mathfrak h=\left\{ [0,5]_{\rho}, [3,8]_{\rho} \right\}$. Let $\mathfrak n=\left\{ [0,3]_{\rho}, [3,4]_{\rho} \right\}$. Then $\mathfrak r([0,3]_{\rho}, \mathfrak h)=\left\{ [4,5]_{\rho}, [3,8]_{\rho} \right\}$ and so $\mathfrak r([3,4]_{\rho}, \mathfrak r([0,3]_{\rho}, \mathfrak h))=\left\{ [4,8]_{\rho}, [5]_{\rho} \right\}$. Note that the segment $[5]_{\rho}$ comes from truncating the segment $[4,5]_{\rho}$ in $\mathfrak r([0,3]_{\rho}, \mathfrak h)$ and the segment $[4,5]_{\rho}$ indeed comes from truncating the segment $[0,5]_{\rho}$. One wonders if one can 'combine' these two effects. Indeed, if one could consider $\mathfrak n'=\left\{ [0,4]_{\rho}, [3]_{\rho} \right\}$, then $\mathfrak r([3]_{\rho}, \mathfrak h)=\left\{ [0,5]_{\rho}, [4,8]_{\rho} \right\}$ and $\mathfrak r([0,4]_{\rho}, \mathfrak r([3]_{\rho}, \mathfrak h))=\left\{ [5]_{\rho}, [4,8]_{\rho} \right\}$. In the last removal process, $[5]_{\rho}$ is obtained directly from truncating $[0,5]_{\rho}$ once.

\end{example}

For convenience, we define a multisegment analogue of $\mathcal S(\pi, \tau)$. For $\mathfrak h, \mathfrak p \in \mathrm{Mult}_{\rho}$,
\[ \mathcal S'( \mathfrak h, \mathfrak p)=\left\{ \mathfrak m \in \mathrm{Mult}_{\rho}: \mathfrak r(\mathfrak m, \mathfrak h)=\mathfrak p \right\} . \]
The above example shows that $\mathfrak n$ is not $\leq_Z$-minimal in $\mathcal S'(\mathfrak h, \mathfrak r(\mathfrak n, \mathfrak h))$. The intuition in Example \ref{ex non-minimal two segment case} will be formulated properly in Section \ref{ss non-overlapping property}, but we shall first deal with more general multisegments (rather than only two segments) below.

\subsection{Local minimizability}

We now define minimizability in Definition \ref{def minimizable function} to show the uniqueness for the $\leq_Z$-minimal element in $\mathcal S(\pi, \tau)$ in Theorem \ref{thm unique module}. 

\begin{definition} \label{def minimizable function}
Let $\mathfrak h \in \mathrm{Mult}_{\rho}$ and let $\mathfrak n \in \mathrm{Mult}_{\rho}$ be admissible to $\mathfrak h$. Let $a$ be the smallest integer such that $\mathfrak n[a]\neq \emptyset$. We say that $(\mathfrak n, \mathfrak h)$ is {\it locally minimizable} if there exists a segment $\overline{\Delta}$ in $\mathfrak n[a+1]$ such that the following holds:
\[  |\left\{ \Delta \in \mathfrak n[a]: \overline{\Delta} \subset \Delta       \right\}| < |\left\{ \Delta \in \mathfrak{fs}(\mathfrak n, \mathfrak h) : \overline{\Delta} \subset \Delta \right\}| .\]
We emphasis that the non-strict inequality $\leq$ always holds. 
\end{definition}

\begin{remark}
We explain more on Definition \ref{def minimizable function}. As suggested from the terminology, the local minimizability $(\mathfrak n, \mathfrak h)$ is to find some $\mathfrak n' <_Z\mathfrak n$ such that $\mathfrak r(\mathfrak n', \mathfrak h)=\mathfrak r(\mathfrak n, \mathfrak h)$. For instance, if all segments $\Delta$ in $\mathfrak n[a]$ satisfy $\overline{\Delta} \subset \Delta$, the removal process guarantees that any $\Delta$ in $\mathfrak{fs}(\mathfrak n, \mathfrak h)$ also satisfies $\overline{\Delta}\subset \Delta$. Hence, the inequality in Definition \ref{def minimizable function} is not satisfied. On the other hand, all segments in $\mathfrak n[a]$ are not linked to $\overline{\Delta}$ and so there is no intersection-union operation for segments in $\mathfrak n[a]$ and $\overline{\Delta}$.

\end{remark}

We have one simple way to check local minimizability by just working on one segment. The proof of the following lemma is straightforward from definitions:

\begin{lemma} \label{lem one segment non local min}
Let $\mathfrak h \in \mathrm{Mult}_{\rho}$ and let $\mathfrak n \in \mathrm{Mult}_{\rho}$ be admissible to $\mathfrak h$. Let $a$ be the smallest integer such that $\mathfrak n[a] \neq \emptyset$. Let $\Delta$ be a segment in $\mathfrak n[a]$. If there exists a segment $\overline{\Delta}$ in $\mathfrak n[a+1]$ such that $\overline{\Delta} \not\subset \Delta$ and $\overline{\Delta} \subset \Upsilon(\Delta, \mathfrak h)$, then $(\mathfrak n, \mathfrak h)$ is locally minimizable.
\end{lemma}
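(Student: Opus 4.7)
The plan is to verify that the same $\overline{\Delta}$ supplied by the hypothesis already witnesses local minimizability of $(\mathfrak n, \mathfrak h)$ in the sense of Definition \ref{def minimizable function}. I would begin by writing $\mathfrak n[a] = \{\Delta_1, \ldots, \Delta_k\}$ with $\Delta_1 = \Delta$, and then use Lemma \ref{lem independent ordering} to compute $\mathfrak{fs}(\mathfrak n, \mathfrak h)$ using this particular ordering: setting $\mathfrak r_0 = \mathfrak h$ and $\mathfrak r_i = \mathfrak r(\Delta_i, \mathfrak r_{i-1})$, the multiset $\mathfrak{fs}(\mathfrak n, \mathfrak h)$ equals $\{\Upsilon(\Delta, \mathfrak h), \Upsilon(\Delta_2, \mathfrak r_1), \ldots, \Upsilon(\Delta_k, \mathfrak r_{k-1})\}$, with $\Upsilon(\Delta, \mathfrak h)$ as a distinguished entry.

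Next I would invoke the nesting property from Definition \ref{def removal process}: if $\Delta_i = [a, b_i]_{\rho}$, then step (1) of the removal process forces $\Upsilon(\Delta_i, \mathfrak r_{i-1})$ to be of the form $[a, c_i]_{\rho}$ with $c_i \geq b_i$, so $\Delta_i \subset \Upsilon(\Delta_i, \mathfrak r_{i-1})$. Consequently, for each $i \geq 2$ with $\overline{\Delta} \subset \Delta_i$ we automatically have $\overline{\Delta} \subset \Upsilon(\Delta_i, \mathfrak r_{i-1})$, which gives
\[ |\{i \in \{2, \ldots, k\} : \overline{\Delta} \subset \Upsilon(\Delta_i, \mathfrak r_{i-1})\}| \geq |\{i \in \{2, \ldots, k\} : \overline{\Delta} \subset \Delta_i\}| = |\{\Delta' \in \mathfrak n[a] : \overline{\Delta} \subset \Delta'\}|, \]
where the last equality uses $\overline{\Delta} \not\subset \Delta = \Delta_1$.

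Finally, the hypothesis $\overline{\Delta} \subset \Upsilon(\Delta, \mathfrak h)$ contributes the extra index $i = 1$ to the full count $|\{\Delta' \in \mathfrak{fs}(\mathfrak n, \mathfrak h) : \overline{\Delta} \subset \Delta'\}|$, yielding the strict inequality demanded by Definition \ref{def minimizable function}. I do not expect any real obstacle here: the argument is a direct bookkeeping consequence of the ordering-independence of $\mathfrak{fs}$ combined with the nesting property, in line with the author's assertion that the proof is straightforward from the definitions.
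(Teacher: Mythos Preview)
Your proposal is correct and matches the spirit of the paper, which simply asserts the lemma is ``straightforward from definitions'' and gives no explicit argument; your bookkeeping via the ordering $\Delta_1=\Delta$ together with $\Delta_i\subset\Upsilon(\Delta_i,\mathfrak r_{i-1})$ is exactly the intended verification. One terminological quibble: the containment $\Delta_i\subset\Upsilon(\Delta_i,\mathfrak r_{i-1})$ comes directly from step~(1) of Definition~\ref{def removal process} (as you also note), not from what the paper calls the ``nesting property'', which refers instead to the chain $\Delta_1\supset\cdots\supset\Delta_r$ within a single removal sequence.
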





\begin{example}
Let $\mathfrak h=\left\{ [0,1]_{\rho}, [1,4]_{\rho}, [1,5]_{\rho}, [1,6]_{\rho}, [2,5]_{\rho}, [3,4]_{\rho} \right\}$, let $\mathfrak n=\left\{[1,3]_{\rho}, [1,6]_{\rho}, [2,4]_{\rho} \right\}$ and let $\mathfrak n'=\left\{[1,3]_{\rho}, [1,6]_{\rho}, [2,5]_{\rho} \right\}$. 

\[ \xymatrix{ & &     &    &  \stackrel{3}{ \bullet} \ar@{-}[r]    &   \stackrel{4}{ \bullet}     &    & \\
    & &     & \stackrel{2}{ \bullet} \ar@{-}[r]    &  \stackrel{3}{ \bullet} \ar@{-}[r]    &   \stackrel{4}{ \bullet}  \ar@{-}[r]   &   \stackrel{5}{ \bullet}   & \\
       &      & \stackrel{1}{{\color{blue} \bullet}}  \ar@{-}[r]  &   \stackrel{2}{{\color{blue} \bullet}} \ar@{-}[r]    &   \stackrel{3}{{\color{blue} \bullet}} \ar@{-}[r]  &  \stackrel{4}{{\color{blue} \bullet}}   &     &   \\
       &       &  \stackrel{1}{  \bullet}  \ar@{-}[r]   &  \stackrel{2}{ \bullet}   \ar@{-}[r]   &  \stackrel{3}{ \bullet} \ar@{-}[r]    &   \stackrel{4}{\bullet}  \ar@{-}[r]  &   \stackrel{5}{\bullet}   &     \\     
			      &	   &  \stackrel{1}{ {\color{blue} \bullet}}  \ar@{-}[r]    &   \stackrel{2}{{\color{blue} \bullet}}  \ar@{-}[r]   &  \stackrel{3}{{\color{blue} \bullet}}  \ar@{-}[r]   &  \stackrel{4}{{\color{blue}\bullet}} \ar@{-}[r]    &   \stackrel{5}{{\color{blue} \bullet}} \ar@{-}[r]  &   \stackrel{6}{{\color{blue} \bullet}}    \\ 	
	&	 \stackrel{0}{\bullet}  \ar@{-}[r]												   &  \stackrel{1}{ \bullet}     &      &       &      &    & }
\]
The blue points represent $\mathfrak{fs}(\mathfrak n, \mathfrak h)$ and $\mathfrak{fs}(\mathfrak n', \mathfrak h)$. Note that 
\[  |\left\{ \Delta \in \mathfrak n[1]: [2,4]_{\rho} \subset \Delta \right\}|=1, \quad |\left\{ \Delta \in \mathfrak{fs}(\mathfrak n, \mathfrak h): [2,4]_{\rho} \subset \Delta \right\}|=2
\]
and so $(\mathfrak n, \mathfrak h)$ is locally minimizable. On the other hand,
\[  |\left\{ \Delta \in \mathfrak n'[1]: [2,5]_{\rho} \subset \Delta \right\}|=1, \quad |\left\{ \Delta \in \mathfrak{fs}(\mathfrak n', \mathfrak h): [2,5]_{\rho} \subset \Delta \right\}|=1 .
\]
Hence $(\mathfrak n', \mathfrak h)$ is not locally minimizable.


\end{example}

\begin{lemma} \label{lem inductive finding intersection}
Let $\mathfrak h \in \mathrm{Mult}_{\rho}$ and let $\mathfrak n \in \mathrm{Mult}_{\rho}$ be admissible to $\mathfrak h$. Let $\mathfrak n'=\mathfrak{trd}(\mathfrak n, \mathfrak h)$ and $\mathfrak h'=\mathfrak{trr}(\mathfrak n, \mathfrak h)$. Let $a$ be the smallest integer such that $\mathfrak n'[a]\neq \emptyset$. Fix some $c>a+1$. Fix a segment $\overline{\Delta}$ of the form $[c,d]_{\rho}$ for some $d$. Suppose
\[  (*) \quad  |\left\{ \Delta \in \mathfrak n'[a+1] :  \overline{\Delta} \subset \Delta \right\}| < |\left\{ \Delta \in \mathfrak{fs}(\mathfrak n', \mathfrak h'): \overline{\Delta} \subset \Delta \right\}| .
\]
\begin{itemize}
\item There exists a segment $\widetilde{\Delta}$ in $\mathfrak n[a]+\mathfrak n[a+1]$ such that 
\[  \mathfrak{fs}(\mathfrak n, \mathfrak h) =\mathfrak{fs}(\widetilde{\mathfrak n}, \mathfrak h), \quad \mathfrak{fs}(\mathfrak n', \mathfrak h')=\mathfrak{fs}(\widetilde{\mathfrak n}', \mathfrak h') ,
\]
where $\widetilde{\mathfrak n}$ is obtained from $\mathfrak n$ by an elementary intersection-union process between $\widetilde{\Delta}$ and $\overline{\Delta}$, and $\widetilde{\mathfrak n}'=\mathfrak{trd}(\widetilde{\mathfrak n}, \mathfrak h)$.
\item Furthermore, if the segment $\widetilde{\Delta}$ cannot be chosen in $\mathfrak n[a+1]$, then 
\[ |\left\{ \Delta \in \mathfrak n[a]: \overline{\Delta} \subset \Delta \right\}|<|\left\{ \Delta \in \mathfrak{fs}(\mathfrak n, \mathfrak h): \overline{\Delta} \subset \Delta \right\}|.  \]
\end{itemize}
\end{lemma}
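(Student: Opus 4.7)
The plan is to exploit positional structure to simplify, then use condition $(*)$ to locate a suitable $\widetilde{\Delta}$. Let $a_0$ denote the smallest index with $\mathfrak n[a_0]\neq\emptyset$. Since $\mathfrak n'=\mathfrak n-\mathfrak n[a_0]+{}^-(\mathfrak n[a_0])$ empties position $a_0$ in $\mathfrak n'$ and contributes only to position $a_0+1$, one has $a\geq a_0+1$ and $\mathfrak n'[a+1]=\mathfrak n[a+1]$. Any candidate $\widetilde{\Delta}\in\mathfrak n[a]+\mathfrak n[a+1]$ has start index $\geq a>a_0$, and $\overline{\Delta}$ has start $c>a+1>a_0$; consequently the elementary intersection-union of $\widetilde{\Delta}$ with $\overline{\Delta}$ leaves $\mathfrak n[a_0]$ unchanged, so $\mathfrak{fs}(\widetilde{\mathfrak n},\mathfrak h)=\mathfrak{fs}(\mathfrak n,\mathfrak h)$ is automatic from Lemma \ref{lem lemma depend on a}. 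The real task reduces to securing $\mathfrak{fs}(\widetilde{\mathfrak n}',\mathfrak h')=\mathfrak{fs}(\mathfrak n',\mathfrak h')$ together with the linkedness of $\widetilde{\Delta}$ and $\overline{\Delta}$.

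To find $\widetilde{\Delta}$, I first attempt $\mathfrak n[a+1]$. Using Lemma \ref{lem independent ordering}, I reorder the removal process on $\mathfrak n'[a+1]$ so that segments $\Delta$ with $\overline{\Delta}\subset\Delta$ are processed before the others. Condition $(*)$ then forces some later step, processing a $\Delta^*$ with $\overline{\Delta}\not\subset\Delta^*$, to produce a first segment $\mathfrak s$ containing $\overline{\Delta}$. Since $\mathfrak s$ starts at $a+1$ and contains $\overline{\Delta}$, such a $\Delta^*$ must itself be linked to $\overline{\Delta}$ (otherwise no segment in the current $\mathfrak h'[a+1]$ shortest-containing $\Delta^*$ could contain $\overline{\Delta}$). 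Setting $\widetilde{\Delta}=\Delta^*$, the upgraded segment $\Delta^*\cup\overline{\Delta}$ still picks $\mathfrak s$ as its first segment, because any segment of the current $\mathfrak h'[a+1]$ containing $\Delta^*\cup\overline{\Delta}$ contains $\Delta^*$ and hence is $\supseteq\mathfrak s$ by the minimality characterization of $\Upsilon$. Combined with Lemma \ref{lem single comparison} and a swap argument that rearranges the remaining processing, $\mathfrak{fs}(\widetilde{\mathfrak n}',\mathfrak h')=\mathfrak{fs}(\mathfrak n',\mathfrak h')$ follows.

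For the second bullet, I argue contrapositively: if no $\widetilde{\Delta}\in\mathfrak n[a+1]$ yields equality of $\mathfrak{fs}$-multisets (because every candidate is either unlinked to $\overline{\Delta}$ or its upgrade strictly inflates the multiset), then the excess long first segment in $\mathfrak{fs}(\mathfrak n',\mathfrak h')$ must originate from the contribution of ${}^-(\mathfrak{fs}(\mathfrak n,\mathfrak h))$ to $\mathfrak h'[a]$ under the truncation $\mathfrak h'=\mathfrak{trr}(\mathfrak n,\mathfrak h)$. Tracing back through this truncation, the number of first segments in $\mathfrak{fs}(\mathfrak n,\mathfrak h)$ containing $\overline{\Delta}$ strictly exceeds $|\{\Delta\in\mathfrak n[a]:\overline{\Delta}\subset\Delta\}|$, which is the required inequality; a suitable $\widetilde{\Delta}\in\mathfrak n[a]$ is then located by the same mechanism as in the previous paragraph, applied one level lower with $\mathfrak h$ in place of $\mathfrak h'$. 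The principal obstacle is establishing exact equality of $\mathfrak{fs}$-multisets in the swap argument, as opposed to the one-sided estimate from Lemma \ref{lem single comparison}, together with a precise accounting of how excess long first segments pass through the truncation $\mathfrak{trr}(\mathfrak n,\mathfrak h)$; both steps require order-dependent bookkeeping between segments and first segments.
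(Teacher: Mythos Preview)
There is a typo in the lemma statement which you have taken at face value: the paper's own proof, and the way the lemma is invoked in Lemma~\ref{lem maxmize intersect}, make clear that $a$ is meant to be the smallest index with $\mathfrak n[a]\neq\emptyset$, not $\mathfrak n'[a]\neq\emptyset$. With the intended reading $a=a_0$, so that $\mathfrak n'[a+1]={}^-(\mathfrak n[a])+\mathfrak n[a+1]$ and $\mathfrak{fs}(\mathfrak n,\mathfrak h)$, $\mathfrak{fs}(\mathfrak n',\mathfrak h')$ are computed at levels $a$ and $a+1$ respectively. Your central shortcut---that $\mathfrak{fs}(\widetilde{\mathfrak n},\mathfrak h)=\mathfrak{fs}(\mathfrak n,\mathfrak h)$ is automatic from Lemma~\ref{lem lemma depend on a} because $\widetilde{\Delta}$ lives above level $a_0$---therefore collapses: when $\widetilde{\Delta}\in\mathfrak n[a]$ the intersection-union alters $\mathfrak n[a]$ itself, and preserving $\mathfrak{fs}(\mathfrak n,\mathfrak h)$ is genuine work.

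That work is the substantive part of the proof (the paper's Case~2). The paper orders $\mathfrak n'[a+1]$ with the ${}^-(\mathfrak n[a])$ block first and the $\mathfrak n[a+1]$ block second. For a segment $\Delta_i\in{}^-(\mathfrak n[a])$ with first segment $\Lambda_i\in\mathfrak h'[a+1]$, one compares with the first segment $\Lambda_i'\in\mathfrak h[a]$ for ${}^+\Delta_i\in\mathfrak n[a]$; the minimality defining $\Upsilon$ and the construction of $\mathfrak h'$ force $\Lambda_i\subset\Lambda_i'$. Thus $\overline{\Delta}\subset\Lambda_i$ implies ${}^+\Delta_i\cup\overline{\Delta}\subset\Lambda_i'$, so replacing ${}^+\Delta_i$ by ${}^+\Delta_i\cup\overline{\Delta}$ still selects $\Lambda_i'$ and $\mathfrak{fs}(\mathfrak n,\mathfrak h)$ is unchanged. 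Your sketch contains neither this two-level $\Lambda_i\subset\Lambda_i'$ comparison nor the case split on which it rests (Case~1: equality in the second bullet's inequality forces the excess in $(*)$ into the $\mathfrak n[a+1]$ block, so $\widetilde{\Delta}$ can be taken there; Case~2: strict inequality, which \emph{is} the second bullet). The second bullet is then the contrapositive of Case~1, not the looser ``tracing back through the truncation'' you describe.
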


\begin{proof}
Recall that
\[ {}^-(\mathfrak n[a])= \left\{  {}^-\Delta : \Delta \in \mathfrak n[a], \Delta\neq [a]_{\rho} \right\}. \]
Let $r=|{}^-(\mathfrak n[a])|$ and let $s=|\mathfrak n[a+1]|$. We arrange the segments in $\mathfrak n'[a+1]$ as follows: the first $r$ segments are those in ${}^-(\mathfrak n[a])$, and the remaining segments in $\mathfrak n'[a+1]$ are those in $\mathfrak n[a+1]$. To facilitate discussions, the first $r$ segments in $\mathfrak n'[a+1]$ are labelled as
\[   \Delta_1, \ldots, \Delta_r
 \]
and the remaining segments in $\mathfrak n'[a+1]$ are:
\[  \widetilde{\Delta}_{1}, \ldots, \widetilde{\Delta}_s .
\]
We also set 
\[   \Lambda_i=\Upsilon(\Delta_i, \mathfrak r(\left\{ \Delta_1, \ldots, \Delta_{i-1} \right\}, \mathfrak h') ,
\]
\[  \widetilde{\Lambda}_i=\Upsilon(\widetilde{\Delta}_i, \mathfrak r(\left\{ \widetilde{\Delta}_1, \ldots, \widetilde{\Delta}_{i-1}, \Delta_1, \ldots, \Delta_r\right\}, \mathfrak h')) .
\]

{\bf Case 1:}
\[  |\left\{ \Delta \in \mathfrak n[a]: \overline{\Delta} \subset \Delta \right\}|=|\left\{ \Delta \in \mathfrak{fs}(\mathfrak n, \mathfrak h): \overline{\Delta} \subset \Delta \right\}|.
\]
This condition and the nesting property imply that for the first $r$ segments $\Delta_i$, if $\overline{\Delta} \not\subset \Delta_i$, then 
\[   \overline{\Delta} \not\subset \Lambda_i .
\]
Thus condition (*) implies that there exists a segment $\widetilde{\Delta}_i$ such that $\overline{\Delta}\not\subset \widetilde{\Delta}_i$ and $\overline{\Delta} \subset \widetilde{\Lambda}_i$. Now we do the intersection-union operation on $\overline{\Delta}$ and $\widetilde{\Delta}_i$ to obtain $\widetilde{\mathfrak n}$ from $\mathfrak n$. Then $\mathfrak n[a]=\widetilde{\mathfrak n}[a]$ and so $\mathfrak{fs}(\mathfrak n, \mathfrak h)=\mathfrak{fs}(\widetilde{\mathfrak n}, \mathfrak h)$. And, $\mathfrak n'$ and $\widetilde{\mathfrak n}':=\mathfrak{trd}(\widetilde{\mathfrak n}, \mathfrak h')$ are only differed by $\widetilde{\Delta}_i$ and $\widetilde{\Delta}_i\cup \overline{\Delta}$. However, if we impose the same ordering in computing $\mathfrak{fs}(\widetilde{\mathfrak n}', \mathfrak h')$, it is straightforward to use $\overline{\Delta} \subset \widetilde{\Lambda}_i$ to see that 
\[  \mathfrak{fs}(\mathfrak n', \mathfrak h')=\mathfrak{fs}(\widetilde{\mathfrak n}', \mathfrak h').
\]

{\bf Case 2:}
\[  |\left\{ \Delta \in \mathfrak n[a]: \overline{\Delta} \subset \Delta \right\}| < |\left\{ \Delta \in \mathfrak{fs}(\mathfrak n, \mathfrak h): \overline{\Delta} \subset \Delta \right\}| .
\]

Now, by (*), there exists a segment $\widetilde{\Delta}=\Delta_i$ or $\widetilde{\Delta}_i$ in $\mathfrak n'$ such that $\overline{\Delta} \not\subset \widetilde{\Delta}$ and $\overline{\Delta} \subset \Lambda$, where $\Lambda=\Lambda_i$ or $\widetilde{\Lambda}_i$ according to $\widetilde{\Delta}$. 

If $\widetilde{\Delta}=\widetilde{\Delta}_i$ for some $i$, then the intersection-union operation is done between $\widetilde{\Delta}$ and $\overline{\Delta}$. The argument is similar to Case 1 and we omit the details.

We now consider the case that $\widetilde{\Delta}=\Delta_i$ for some $i$. For convenience, set ${}^+[a+1,e]_{\rho}=[a,e]_{\rho}$ for any $e$. Note that all ${}^+\Delta_k$ ($k=1, \ldots, r$) constitute all the non-singleton segments in $\mathfrak n[a]$. We can use the ordering ${}^+\Delta_1, \ldots, {}^+\Delta_r$ (with other singleton segments at the end) to compute $\mathfrak{fs}(\mathfrak n, \mathfrak h)$; and similarly use that ordering with ${}^+\Delta_i$ replaced by ${}^+\Delta_i\cup \overline{\Delta}$ to compute $\mathfrak{fs}(\widetilde{\mathfrak n}, \mathfrak h)$. 

The only difference in computing $\mathfrak{fs}(\mathfrak n, \mathfrak h)$ and $\mathfrak{fs}(\widetilde{\mathfrak n}, \mathfrak h)$ is to compute the first segments for ${}^+\Delta_i$ and ${}^+\Delta_i\cup \overline{\Delta}$, but we can still guarantee that choices for first segments (for computing $\mathfrak{fs}(\mathfrak n, \mathfrak h)$ and $\mathfrak{fs}(\mathfrak n', \mathfrak h')$) still coincide by using the nesting property of the removal process and the condition $\overline{\Delta}\subset \Lambda$. Hence, $\mathfrak{fs}(\mathfrak n, \mathfrak h)=\mathfrak{fs}(\widetilde{\mathfrak n}, \mathfrak h)$. In more details, we again consider and arrange the segments in $\mathfrak n[a]$ as follows: the first $r$ segments are ${}^+\Delta_1, \ldots, {}^+\Delta_r$ and the remaining segments in $\mathfrak{fs}(\mathfrak n, \mathfrak h)$ are the singleton segments $[a]_{\rho}$. Now, for $i=1, \ldots, r$, let
\[  \Lambda'_i= \Upsilon({}^+\Delta_i, \mathfrak r(\left\{ {}^+\Delta_1, \ldots, {}^+\Delta_{i-1}\right\}, \mathfrak h)) ,   
\] 
where are not singletons (since each ${}^+\Delta_i$ is not a singleton). From definitions, those $\Lambda'_i$ are in $\mathfrak{fs}(\mathfrak n, \mathfrak h)$ and so those ${}^-\Lambda'_i$ are in $\mathfrak h'$ by (\ref{eqn truncate multiple points}). Now, from the minimality in choosing the first segments, we have that:
\[   \Lambda_i \subset \Lambda_i' 
\]
Thus $\Delta_i\cup \overline{\Delta} \subset \Lambda_i$ implies that ${}^+\Delta_i\cup \overline{\Delta} \subset \Lambda_i'$. When one chooses those first segments, the one in $\mathfrak h[a]$ for ${}^+\Delta_i\cup \overline{\Delta}$ will then agree that for ${}^+\Delta_i$.

Computing $\mathfrak{fs}(\mathfrak n', \mathfrak h')=\mathfrak{fs}(\widetilde{\mathfrak n}', \mathfrak h')$ is again similar since the only difference between $\mathfrak n'$ and $\widetilde{\mathfrak n}'$ is $\Delta_i$ and $\Delta_i \cup \overline{\Delta}$. In more details, the first $i-1$ segments for both $\mathfrak n'[a+1]$ and $\widetilde{\mathfrak n}'$ can be arranged as $\Delta_1, \ldots, \Delta_{i-1}$, and so the choices for the first segments in both cases are $\Lambda_1, \ldots, \Lambda_{i-1}$. The $i$-th segment for $\mathfrak n'[a+1]$ can be arranged as $\Delta_i$ while the $i$-th segment for $\widetilde{\mathfrak n}'[a+1]$ can be arranged as $\Delta_i \cup \overline{\Delta} (\subset \Lambda_i)$. Now the next choice for the first segment in the case of $\mathfrak n'[a+1]$ is $\Lambda_i$, and by the minimal choice of $\Lambda_i$ for $\mathfrak n'[a+1]$, the next choice for the first segment in the case of $\widetilde{\mathfrak n}'[a+1]$ must also be $\Lambda_i$. 
\end{proof}

\section{Uniqueness of minimality in $\mathcal S(\pi, \tau)$} \label{s unique minimal}

The terminology of minimizability is suggested by the following lemma:

\begin{lemma} \label{lem maximzable and resultant}
Let $\mathfrak h \in \mathrm{Mult}_{\rho}$ and let $\mathfrak n \in \mathrm{Mult}_{\rho}$ be admissible to $\mathfrak h$. Let the fine chain $\mathrm{fc}_{\mathfrak h}(\mathfrak n)$ take the form
\[ \mathfrak{fs}(\mathfrak n_0, \mathfrak h_0), \mathfrak{fs}(\mathfrak n_1, \mathfrak h_1), \ldots \]
as in Definition \ref{def fine chain seq}. If $(\mathfrak n_j, \mathfrak h_j)$ is not locally minimizable for any $j$, then there is no multisegment $\mathfrak n'$ such that $\mathfrak n' \lneq_Z \mathfrak n$ and $\mathfrak r(\mathfrak n, \mathfrak h)=\mathfrak r(\mathfrak n', \mathfrak h)$. 
\end{lemma}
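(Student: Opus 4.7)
The plan is to argue by contrapositive: assuming there exists $\mathfrak n^* \lneq_Z \mathfrak n$ with $\mathfrak r(\mathfrak n^*, \mathfrak h) = \mathfrak r(\mathfrak n, \mathfrak h)$, I will exhibit a level $j^*$ at which $(\mathfrak n_{j^*}, \mathfrak h_{j^*})$ is locally minimizable, contradicting the hypothesis. By the closure property (Theorem \ref{thm multisegment intersect union}) together with the transitivity of $\leq_Z$, it suffices to treat the case in which $\mathfrak n^*$ is obtained from $\mathfrak n$ by a single elementary intersection-union on two linked segments $\Delta = [a_1, b_1]_\rho$ and $\Delta' = [a_2, b_2]_\rho$ with $a_1 < a_2$ and $b_1 < b_2$; the linked condition forces $b_1 \geq a_2 - 1$.

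Next, I observe that by Lemma \ref{lem coincide lemma} the fine chains of $\mathfrak n$ and $\mathfrak n^*$ coincide, so the intermediate truncated hosts $\mathfrak h_j = \mathfrak h^*_j$ agree at every level; moreover, because $\mathfrak{trd}$ depends only on the minimum starting point, $\mathfrak n_j$ and $\mathfrak n^*_j$ share the same minimum and differ only in the pair-related segments. I take $j^*$ to be the level at which $\min(\mathfrak n_{j^*}) = a_2 - 1$. The condition $b_1 \geq a_2 - 1$ guarantees that $j^*$ exists: the descendant of $\Delta$ has its start advance until it reaches $a_2 - 1$ (appearing as $[a_2 - 1, b_1]_\rho$, or as the singleton $[a_2 - 1]_\rho$ in the degenerate case $b_1 = a_2 - 1$), while $\Delta'$ remains untouched since its start $a_2$ exceeds the running minimum.

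The decisive step is to verify local minimizability at $j^*$ using $\overline{\Delta} := \Delta' = [a_2, b_2]_\rho \in \mathfrak n_{j^*}[a_2]$. Writing $\widetilde\Delta$ for the descendant of $\Delta$ in $\mathfrak n_{j^*}$ and $\mathcal R := \mathfrak n_{j^*}[a_2 - 1] \setminus \{\widetilde\Delta\}$ for the portion shared with $\mathfrak n^*_{j^*}$, I note that $\widetilde\Delta$ (endpoint $b_1 < b_2$, or a singleton) cannot contain $\overline{\Delta}$, so the left side of the local minimizability inequality is exhausted by segments of $\mathcal R$. For the right side, Lemma \ref{lem independent ordering} lets me process $\mathcal R$ first on both sides; by the nesting property of the removal process, each first segment arising from a $\Delta \in \mathcal R$ that contains $\overline{\Delta}$ likewise contains $\overline{\Delta}$, furnishing an injection into the right side count. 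Processing $\widetilde\Delta$ last in $\mathfrak n_{j^*}$ (respectively $[a_2 - 1, b_2]_\rho$ last in $\mathfrak n^*_{j^*}$), the fine chain coincidence forces the two additional first segments to agree; this common first segment has the form $[a_2 - 1, c]_\rho$ with $c \geq b_2$ (since it must cover $[a_2 - 1, b_2]_\rho$), so it contains $\overline{\Delta}$ and provides one further contribution. This yields the strict inequality $\mathrm{RHS} \geq \mathrm{LHS} + 1$, the required contradiction.

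The main technical care will lie in (a) confirming that the target level $j^*$ is always reached, regardless of extraneous segments appearing in $\mathfrak n$, and (b) uniformly handling the degenerate case $b_1 = a_2 - 1$, where $\Delta \cap \Delta' = \emptyset$ and $\widetilde\Delta$ is a singleton at $j^*$. Both reduce to a direct monotonicity analysis of $\min(\mathfrak n_j)$ in $j$ combined with the linked condition; in particular, the singleton scenario is handled by the same counting argument since $\widetilde\Delta$ still fails to contain $\overline{\Delta}$ and the fine chain coincidence continues to pin down the common first segment.
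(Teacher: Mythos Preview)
Your proof is correct and follows essentially the same route as the paper: argue by contrapositive, use the convexity result (Theorem~\ref{thm multisegment intersect union}) to reduce to a single elementary intersection-union, locate the level $j^*$ at which the truncated descendant of the shorter segment has start $a_2-1$, and then exploit the fine-chain coincidence (Lemma~\ref{lem coincide lemma}) to establish local minimizability there via the witness $\overline{\Delta}=\Delta'$. The paper compresses your counting argument into the chain of inequalities
\[
|\{\Delta\in\mathfrak n_{j^*}[a_2-1]:\overline{\Delta}\subset\Delta\}|<|\{\Delta\in\mathfrak n^*_{j^*}[a_2-1]:\overline{\Delta}\subset\Delta\}|\le|\{\Delta\in\mathfrak{fs}(\mathfrak n^*_{j^*},\mathfrak h_{j^*}):\overline{\Delta}\subset\Delta\}|,
\]
using that first segments always contain their inputs for the second inequality and then invoking $\mathfrak{fs}(\mathfrak n_{j^*},\mathfrak h_{j^*})=\mathfrak{fs}(\mathfrak n^*_{j^*},\mathfrak h_{j^*})$; your version unpacks this by ordering $\mathcal R$ first and isolating the last first-segment, which is the same reasoning made explicit.
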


\begin{proof}
Suppose there is a multisegment $\mathfrak n'$ such that $\mathfrak n' \lneq_Z \mathfrak n$ and $\mathfrak r(\mathfrak n, \mathfrak h)=\mathfrak r(\mathfrak n', \mathfrak h)$. Then, by Theorem \ref{thm closed under zelevinsky}, we may take $\mathfrak n'$ to be obtained from $\mathfrak n$ by an elementary intersection-union process. Let $\widetilde{\Delta}=[\widetilde{a}, \widetilde{b}]_{\rho}, \overline{\Delta}=[\overline{a}, \overline{b}]_{\rho}$ in $\mathfrak n$ be the segments involved in the intersection-union process, and switching labeling if necessary, we may assume that $\widetilde{a} <\overline{a}$.  

We similarly obtain the fine chain 
\[ \mathfrak{fs}(\mathfrak n'_0, \mathfrak h'_0), \mathfrak{fs}(\mathfrak n'_1, \mathfrak h'_1), \ldots, 
\]
for $(\mathfrak n', \mathfrak h)$. We consider $j$ such that $\overline{a}-1$ is the smallest integer $c$ such that $\mathfrak n_j[c]\neq \emptyset$. (Such $j$ exists by using the condition that $\widetilde{\Delta}$ and $\overline{\Delta}$ are linked.) 

Then, in $\mathfrak n_j$, we have a segment $[\overline{a}-1,\widetilde{b}]_{\rho}$ coming by truncating $\widetilde{\Delta}$. If we replace $[\overline{a}-1,\widetilde{b}]_{\rho}$ in $\mathfrak n_j$ by $[\overline{a}-1,\overline{b}]_{\rho}$, this gives $\mathfrak n_j'$. 

Now
\[     |\left\{ \Delta \in \mathfrak n_j[a-1]: \overline{\Delta}\subset \Delta \right\}|          < |\left\{ \Delta \in \mathfrak n_j'[a-1]: \overline{\Delta} \subset \Delta \right\}|  \leq |\left\{ \Delta \in \mathfrak{fs}(\mathfrak n_j', \mathfrak h_j): \widetilde{\Delta} \subset \Delta \right\}|,
\]
where the first strict inequality comes from $[\overline{a}-1, \overline{b}]_{\rho}$. But by Lemma \ref{lem coincide lemma}, two fine chains coincide and in particular $\mathfrak{fs}(\mathfrak n_j, \mathfrak h_j)=\mathfrak{fs}(\mathfrak n'_j, \mathfrak h_j)$. Hence, we now have that $(\mathfrak n_j, \mathfrak h_j)$ is locally minimizable by Definition \ref{def minimizable function} as desired.
\end{proof}

We now prove the converse of Lemma \ref{lem maximzable and resultant}. The main idea is to use Lemma \ref{lem inductive finding intersection} to locate a suitable choice of a segment for the intersection-union process. Since the local minimizability is for the multisegments in the fine chain $\mathrm{fc}_{\mathfrak h}(\mathfrak n)$, one may not be able to immediately find a segment that originally comes from $\mathfrak n$ and so the second bullet of  Lemma \ref{lem inductive finding intersection} allows one to trace back and inductively use Lemma \ref{lem inductive finding intersection} to find suitable segments coming from an original segment in $\mathfrak n$ responsible for the intersection-union operation.

\begin{lemma} \label{lem maxmize intersect}
We keep using the notations as in Lemma \ref{lem maximzable and resultant}. If $(\mathfrak n_j, \mathfrak h_j)$ is locally minimizable for some $j$, then there is a multisegment $\mathfrak n'$ such that $\mathfrak n' \lneq_Z \mathfrak n$ and $\mathfrak r(\mathfrak n, \mathfrak h)=\mathfrak r(\mathfrak n', \mathfrak h')$. 
\end{lemma}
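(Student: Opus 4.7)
The plan is to build the desired $\mathfrak n'$ by a single elementary intersection-union operation on $\mathfrak n$ using two segments identified via downward induction on the fine chain depth. Let $j$ be the index at which $(\mathfrak n_j, \mathfrak h_j)$ is locally minimizable, witnessed by a segment $\overline\Delta \in \mathfrak n_j[a_j+1]$, where $a_j$ is the smallest integer with $\mathfrak n_j[a_j] \neq \emptyset$. The strict inequality in Definition \ref{def minimizable function}, together with the nesting property of the removal sequence, pinpoints a segment $\Delta \in \mathfrak n_j[a_j]$ such that $\overline\Delta \not\subset \Delta$ while $\overline\Delta$ is contained in the corresponding first segment in $\mathfrak{fs}(\mathfrak n_j, \mathfrak h_j)$; $\Delta$ and $\overline\Delta$ are then linked in $\mathfrak n_j$. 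Intersection-unioning them in $\mathfrak n_j$ yields a multisegment with unchanged $\mathfrak{fs}(\cdot,\mathfrak h_j)$ (by the nesting argument used in the proof of Lemma \ref{lem inductive finding intersection}), giving the base of the descent.

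Next I perform a downward induction from $k = j-1$ to $k = 0$, applying Lemma \ref{lem inductive finding intersection} at each stage to the pair $(\mathfrak n_{k-1}, \mathfrak h_{k-1})$ with the same $\overline\Delta$. Since $\overline\Delta$ starts at $c = a_j + 1$, and $a_k < a_j$ for $k < j$, we have $c > a_k + 1$, so the lemma's hypothesis on $c$ is satisfied throughout the induction. The initial condition $(*)$ at $k = j-1$ is established by translating the local minimizability at stage $j$ back through the truncation $\mathfrak n_{j-1} \to \mathfrak n_j$, using Lemma \ref{lem lemma depend on a} together with the relationship between $\mathfrak{fs}(\mathfrak n_{j-1}, \mathfrak h_{j-1})$ and $\mathfrak{fs}(\mathfrak n_j, \mathfrak h_j)$. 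At each stage, either the first bullet of Lemma \ref{lem inductive finding intersection} produces a candidate $\widetilde\Delta \in \mathfrak n_{k-1}[a_{k-1}+1]$ and the induction stops, or its second bullet yields condition $(*)$ at stage $k-1$ and the induction continues. As $k$ strictly decreases, the procedure terminates at some stage $k^* \geq 1$ with $\widetilde\Delta \in \mathfrak n_{k^*-1}[a_{k^*-1}+1]$.

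Because $\widetilde\Delta$ starts at $a_{k^*-1}+1$, strictly greater than $a_0, a_1, \ldots, a_{k^*-2}$, it survives the earlier truncations unchanged, and hence belongs to $\mathfrak n = \mathfrak n_0$. Likewise, the ancestor $\overline\Delta_0 \in \mathfrak n$ of $\overline\Delta$ (the unique segment of $\mathfrak n$ whose iterated left-truncations yield $\overline\Delta$) is linked to $\widetilde\Delta$. Setting $\mathfrak n' := \mathfrak n - \widetilde\Delta - \overline\Delta_0 + (\widetilde\Delta \cup \overline\Delta_0) + (\widetilde\Delta \cap \overline\Delta_0)$ produces $\mathfrak n' \lneq_Z \mathfrak n$. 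Iterated application of the first bullet of Lemma \ref{lem inductive finding intersection} at every intermediate stage shows the fine chains $\mathrm{fc}_{\mathfrak h}(\mathfrak n)$ and $\mathrm{fc}_{\mathfrak h}(\mathfrak n')$ coincide, whereupon Lemma \ref{lem coincide lemma} gives $\mathfrak r(\mathfrak n', \mathfrak h) = \mathfrak r(\mathfrak n, \mathfrak h)$. The main obstacle I anticipate is bookkeeping in the initial translation step: establishing condition $(*)$ at stage $j-1$ from local minimizability at stage $j$ requires a careful case analysis in terms of whether $a_j = a_{j-1}+1$ or $a_j > a_{j-1}+1$, and verifying that the intersection-union at $\mathfrak n$ actually recovers the one at $\mathfrak n_{k^*-1}$ after iterated truncation relies on the reversibility of $\mathfrak{trd}$ on the specific segments involved.
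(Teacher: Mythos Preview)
Your approach is essentially the same as the paper's: use local minimizability at stage $j$ to identify the witness $\overline\Delta$, then descend via repeated application of Lemma \ref{lem inductive finding intersection} until a partner segment $\widetilde\Delta$ landing in the original $\mathfrak n$ is found, perform the elementary intersection-union there, and conclude via Lemma \ref{lem coincide lemma}. The paper organizes this as a case split (Case 1: $\widetilde\Delta$ already lies in $\mathfrak n[a_j]$; Case 2: iterate Lemma \ref{lem inductive finding intersection}), while you phrase it as a single downward induction; the content is the same.

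One simplification you are missing: there is no need for an ``ancestor'' $\overline\Delta_0$. Since every truncation in the fine chain up to stage $j$ occurs at an index $a_i < a_j$, no segment starting at $a_j+1$ is ever touched, so $\mathfrak n_j[a_j+1]=\mathfrak n[a_j+1]$ and hence $\overline\Delta$ is already a segment of $\mathfrak n$. This removes the reversibility worry you flag at the end. The paper uses this observation tacitly (writing ``a certain segment in $\mathfrak n[a+1]$'').

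A small gap in your termination argument: you assert the descent halts at some $k^*\geq 1$ with $\widetilde\Delta\in\mathfrak n_{k^*-1}[a_{k^*-1}+1]$, but at the bottom step (input $(\mathfrak n_0,\mathfrak h_0)$) the first bullet of Lemma \ref{lem inductive finding intersection} may produce $\widetilde\Delta\in\mathfrak n_0[a_0]$ rather than $\mathfrak n_0[a_0+1]$. This is harmless---either way $\widetilde\Delta\in\mathfrak n$---but your stated stopping criterion should allow for it. Likewise, your verification that $(*)$ propagates from stage $k$ to $k-1$ implicitly uses $a_{k-1}=a_{k-2}+1$; when this fails (all segments at $a_{k-2}$ are singletons and $\mathfrak n_{k-2}[a_{k-2}+1]=\emptyset$) the propagation still works because the left-hand side of $(*)$ becomes zero, but this case deserves a sentence. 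The paper's proof is equally terse on these points.
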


\begin{proof}
We pick any $j$ such that $(\mathfrak n_j, \mathfrak h_j)$ is locally minimizable. Let $a$ be the smallest integer such that $\mathfrak n_j[a]\neq \emptyset$. The below argument is similar if $j=1$ and so we assume $j >1$ for the convenience of using the stated form of Lemma \ref{lem inductive finding intersection}. 

Note that $\mathfrak n_j[a]={}^-(\mathfrak n_{j-1}[a-1])+\mathfrak n[a]$. (Here we have $\mathfrak n_{j-1}[a]=\mathfrak n[a]$.) The local minimizability condition implies that we can use the first bullet of Lemma \ref{lem inductive finding intersection} (set $c=a+1>a-1+1$ in our case) with respect to a certain segment in $\mathfrak n[a+1]$, denoted by $\overline{\Delta}$. Then, that lemma implies that we can find a segment $\widetilde{\Delta}$ in ${}^-(\mathfrak n_{j-1}[a-1])+\mathfrak n[a]$ satisfying the required properties.

The first case is that $\widetilde{\Delta}$ comes from $\mathfrak n[a]$. In this case, let $\widetilde{\mathfrak n}$ be the multisegment obtained from $\mathfrak n$ by the intersection-union operation of the segments $\widetilde{\Delta}$ and $\overline{\Delta}$. Then it is straightforward from the definitions that 
\[  \mathfrak{fs}(\mathfrak n_0, \mathfrak h_0)=\mathfrak{fs}(\widetilde{\mathfrak n}_0, \mathfrak h_0), \ldots, \mathfrak{fs}(\mathfrak n_{j-1}, \mathfrak h_{j-1})=\mathfrak{fs}(\widetilde{\mathfrak n}_{j-1}, \mathfrak h_{j-1}),
\]
where $\mathfrak h_0=\mathfrak h$, $\mathfrak{fs}(\widetilde{\mathfrak n}_k, \mathfrak h_k)$ are the first $j-1$ terms of $\mathrm{fc}_{\mathfrak h}(\widetilde{\mathfrak n})$. However, $\mathfrak{fs}(\mathfrak n_j, \mathfrak h_j)=\mathfrak{fs}(\widetilde{\mathfrak n}_j, \mathfrak h_j)$ is guaranted by Lemma \ref{lem inductive finding intersection}, and so $\mathfrak{trr}(\mathfrak n_j, \mathfrak h_j)=\mathfrak{trr}(\widetilde{\mathfrak n}_j, \mathfrak h_j)$. But then $\mathfrak n_{j+1}=\widetilde{\mathfrak n}_{j+1}$, which follows from that after truncating the points $\nu^c\rho$ (with $c\leq a$) for $\widetilde{\Delta}\cup \overline{\Delta}$ and $\widetilde{\Delta}\cap \overline{\Delta}$, the truncated segments coincide. Now the remaining terms in two fine chains also agree by the definitions. Hence, two fine chains coincide and so $\mathfrak r(\mathfrak n, \mathfrak h)=\mathfrak r(\widetilde{\mathfrak n}, \mathfrak h)$ by Lemma \ref{lem coincide lemma}.

The second case is that $\widetilde{\Delta}$ cannot come from $\mathfrak n[a]$. This case can be obtained using a repeated application of Lemma \ref{lem inductive finding intersection}, followed by Lemma \ref{lem coincide lemma}.
\end{proof}

\begin{proposition} \label{prop unique combin}
Let $\mathfrak h, \mathfrak p \in \mathrm{Mult}_{\rho}$.
 Then there exists a unique minimal element in $\mathcal S'(\mathfrak h, \mathfrak p)$ if $\mathcal S'(\mathfrak h, \mathfrak p)\neq \emptyset$. 
\end{proposition}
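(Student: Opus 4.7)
The plan is to prove Proposition \ref{prop unique combin} by strong induction on the cardinality of the cuspidal support of any $\mathfrak{n}\in\mathcal{S}'(\mathfrak{h},\mathfrak{p})$ (a quantity determined by $\mathfrak{h}$ and $\mathfrak{p}$). The base case, cuspidal support of cardinality zero, is trivial since $\mathfrak{n}=\emptyset$ is then the unique element.

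For the inductive step, suppose $\mathfrak{n},\mathfrak{n}'$ are two $\leq_Z$-minimal elements of $\mathcal{S}'(\mathfrak{h},\mathfrak{p})$; the goal is to show $\mathfrak{n}=\mathfrak{n}'$. By Lemma \ref{lem coincide lemma}, the fine chains of $(\mathfrak{n},\mathfrak{h})$ and $(\mathfrak{n}',\mathfrak{h})$ coincide, so the smallest starting level $c_0$, the truncation $\mathfrak{h}^*:=\mathfrak{trr}(\mathfrak{n},\mathfrak{h})=\mathfrak{trr}(\mathfrak{n}',\mathfrak{h})$, and the first fine-chain term $\mathfrak{fs}_0$ are all common. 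Setting $\mathfrak{n}^*:=\mathfrak{trd}(\mathfrak{n},\mathfrak{h})$ and $(\mathfrak{n}')^*:=\mathfrak{trd}(\mathfrak{n}',\mathfrak{h})$, Lemma \ref{lem multiple truncate} gives $\mathfrak{r}(\mathfrak{n}^*,\mathfrak{h}^*)=\mathfrak{r}((\mathfrak{n}')^*,\mathfrak{h}^*)=\mathfrak{p}$. The minimality of $\mathfrak{n}$ (resp.\ $\mathfrak{n}'$) combined with Lemma \ref{lem maxmize intersect} rules out any locally minimizable step in its fine chain; since the fine chain of $(\mathfrak{n}^*,\mathfrak{h}^*)$ is precisely the tail of that of $(\mathfrak{n},\mathfrak{h})$, Lemma \ref{lem maximzable and resultant} makes $\mathfrak{n}^*$ and $(\mathfrak{n}')^*$ minimal in $\mathcal{S}'(\mathfrak{h}^*,\mathfrak{p})$. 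Since the cuspidal support has strictly decreased, the induction hypothesis gives $\mathfrak{n}^*=(\mathfrak{n}')^*$.

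What remains is to recover $\mathfrak{n}$ uniquely from $\mathfrak{n}^*$. At any level $c>c_0+1$, $\mathfrak{n}[c]=\mathfrak{n}^*[c]=(\mathfrak{n}')^*[c]=\mathfrak{n}'[c]$, so any discrepancy must lie at levels $c_0$ and $c_0+1$. I would write $x(b),x'(b)$ for the multiplicities of $[c_0,b]_\rho$ in $\mathfrak{n}[c_0],\mathfrak{n}'[c_0]$, and $y(b),y'(b)$ for the multiplicities of $[c_0+1,b]_\rho$ in $\mathfrak{n}[c_0+1],\mathfrak{n}'[c_0+1]$; the identity $\mathfrak{n}^*[c_0+1]=(\mathfrak{n}')^*[c_0+1]$ then gives $x(b)+y(b)=x'(b)+y'(b)$ for $b\geq c_0+1$, while the bijection between $\mathfrak{n}[c_0]$ (resp.\ $\mathfrak{n}'[c_0]$) and $\mathfrak{fs}_0$ forces $\sum_b x(b)=\sum_b x'(b)=|\mathfrak{fs}_0|$.

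Assuming for contradiction that $x\neq x'$, let $b^*$ be the largest index where $x(b^*)\neq x'(b^*)$; the total-count identity forces $b^*\geq c_0+1$. Without loss of generality $x(b^*)>x'(b^*)$, whence $y'(b^*)>y(b^*)\geq 0$, so $\overline{\Delta}:=[c_0+1,b^*]_\rho\in\mathfrak{n}'[c_0+1]$. Non-local minimizability of $(\mathfrak{n}',\mathfrak{h})$ at $\overline{\Delta}$ yields $\sum_{b\geq b^*}x'(b)=|\{\Lambda\in\mathfrak{fs}_0:\overline{\Delta}\subset\Lambda\}|$, whereas the always-holding reverse inequality---which is valid for every test segment $\overline{\Delta}=[c_0+1,d]_\rho$ via the $\Upsilon$-to-$\Delta$ correspondence implicit in $\mathfrak{fs}_0$---gives $\sum_{b\geq b^*}x(b)\leq|\{\Lambda\in\mathfrak{fs}_0:\overline{\Delta}\subset\Lambda\}|$. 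Combined with $\sum_{b>b^*}x(b)=\sum_{b>b^*}x'(b)$ from the maximality of $b^*$, one derives a contradiction; hence $x=x'$, then $y=y'$, and therefore $\mathfrak{n}=\mathfrak{n}'$. The main obstacle is this final top-level rigidity step, which rests on the fact that the $\leq$-direction of the local-minimizability inequality remains valid for \emph{every} candidate segment $\overline{\Delta}=[c_0+1,d]_\rho$, not merely those already appearing in $\mathfrak{n}[c_0+1]$.
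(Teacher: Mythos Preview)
Your argument is correct, and the final rigidity step is indeed the crux; the universal inequality
\[
|\{\Delta\in\mathfrak n[c_0]:\overline\Delta\subset\Delta\}|\le|\{\Lambda\in\mathfrak{fs}_0:\overline\Delta\subset\Lambda\}|
\]
holds for every segment $\overline\Delta$ with $a(\overline\Delta)=\nu^{c_0+1}\rho$, not only for those in $\mathfrak n[c_0+1]$, because the correspondence $\Delta_i\mapsto\Upsilon(\Delta_i,\mathfrak r_{i-1})$ built into $\mathfrak{fs}(\mathfrak n,\mathfrak h)$ satisfies $\Delta_i\subset\Upsilon(\Delta_i,\mathfrak r_{i-1})$ by Step~(1) of Definition~\ref{def removal process}. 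With that, your largest-discrepancy argument goes through.

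Your route differs from the paper's in two respects. First, the paper runs two interleaved inductions---one on the number of segments (when $\mathfrak n[a]\cap\mathfrak n'[a]\neq\emptyset$, peel off a common segment and pass to $\mathcal S'(\mathfrak r(\widetilde\Delta,\mathfrak h),\mathfrak p)$) and one on $l_{rel}$ (when $\mathfrak n[a]\cap\mathfrak n'[a]=\emptyset$)---whereas you use a single induction on the cuspidal-support cardinality. Second, for the endgame the paper argues by contradiction from disjointness: taking $\Delta\subsetneq\Delta'$ the shortest segments of $\mathfrak n[a]$ and $\mathfrak n'[a]$, it deduces ${}^-\Delta'\in\mathfrak n[a+1]$ and invokes Lemma~\ref{lem one segment non local min} to produce an explicit witness of local minimizability of $(\mathfrak n,\mathfrak h)$. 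Your counting argument with the multiplicities $x(b),x'(b),y(b),y'(b)$ bypasses the case split and never needs Lemma~\ref{lem one segment non local min}; it is more streamlined, at the cost of being slightly less concrete about which segment actually causes the failure. Both approaches rest on the same pair of Lemmas~\ref{lem maximzable and resultant} and~\ref{lem maxmize intersect} to pass minimality from $\mathfrak n$ to $\mathfrak n^*$.
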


\begin{proof}
We first explain the main idea of the proof, which is inductive in nature. For this, for a segment $[a,b]_{\rho} \in \mathrm{Seg}_{\rho}$, define $l_{rel}([a,b]_{\rho})=b-a+1$, and a multisegment $\mathfrak n \in \mathrm{Mult}_{\rho}$, define $l_{rel}(\mathfrak n)=\sum l_{rel}(\Delta)$, where the sum runs for all segments in $\mathfrak n$.  One first picks two minimal multisegments $\mathfrak n$ and $\mathfrak n'$ in $\mathcal S'(\mathfrak h, \mathfrak p)$. One then finds $\prec^L$-minimal segments $\widetilde{\Delta}$ and $\widetilde{\Delta}'$ in $\mathfrak n$ and $\mathfrak n'$ respectively. If $\widetilde{\Delta}=\widetilde{\Delta}'$, then one uses induction on the size of $\mathfrak n$ to argue $\mathfrak n-\widetilde{\Delta}=\mathfrak n'-\widetilde{\Delta}$. If $\widetilde{\Delta} \neq \widetilde{\Delta}'$, then one first reduces to the case that $\widetilde{\Delta} \subsetneq \widetilde{\Delta}'$. Then one applies induction hypothesis on the $l_{rel}(\mathfrak n)$ to show ${}^-\widetilde{\Delta}'$ is also in $\mathfrak n$. Then one shows that $\widetilde{\Delta}$ and $\widetilde{\Delta}'$ in $\mathfrak n$ give rise the local minimizability. We now give more details below.

Let $\mathfrak n, \mathfrak n'$ be two minimal multisegments in $\mathcal S'(\mathfrak h, \mathfrak p)$. Let $a$ be the smallest integer such that $\mathfrak n[a]\neq 0$. Then, by a comparison on cuspidal representations, $a$ is also the smallest integer such that $\mathfrak n'[a]\neq 0$. 

Suppose $\mathfrak n[a]\cap \mathfrak n'[a] \neq \emptyset$. Let $\widetilde{\Delta} \in \mathfrak n[a] \cap \mathfrak n'[a]$. Then we consider $\mathcal S'(\mathfrak r(\widetilde{\Delta}, \mathfrak h), \mathfrak p)$. The minimality for $\mathfrak n$ and $\mathfrak n'$ also guarantees that $\mathfrak n-\widetilde{\Delta}$ and $\mathfrak n'-\widetilde{\Delta}$ are also minimal in $\mathcal S'(\mathfrak r(\widetilde{\Delta}, \mathfrak h), \mathfrak p)$. Thus, by induction on the size of $\mathfrak n$, we have that $\mathfrak n-\widetilde{\Delta}=\mathfrak n'-\widetilde{\Delta}$ and so $\mathfrak n=\mathfrak n'$.

Now suppose $\mathfrak n[a] \cap \mathfrak n'[a] =\emptyset$ to obtain a contradiction. Let $\Delta$ and $\Delta'$ be the shortest segment in $\mathfrak n[a]$ and $\mathfrak n'[a]$ respectively. Switching labeling if necessary, we may assume that $\Delta \subsetneq \Delta'$. Then, by Lemma \ref{lem multiple truncate},  
\[  \mathfrak r( \mathfrak n, \mathfrak h)=\mathfrak r(\mathfrak{trd}(\mathfrak n, \mathfrak h), \mathfrak{trr}(\mathfrak n, \mathfrak h)), \quad \mathfrak r(\mathfrak n', \mathfrak h)=\mathfrak r(\mathfrak{trd}(\mathfrak n', \mathfrak h), \mathfrak{trr}(\mathfrak n', \mathfrak h)) .
\]
By Lemma \ref{lem coincide lemma}, 
\[ \mathfrak{trr}(\mathfrak n, \mathfrak h)=\mathfrak{trr}(\mathfrak n', \mathfrak h) .
\]

By Lemma \ref{lem maxmize intersect}, $(\mathfrak{trr}(\mathfrak n, \mathfrak h), \mathfrak{trd}(\mathfrak n, \mathfrak h))$ and the terms from the fine chains are not locally minimizable. Similarly, this also holds for  $(\mathfrak{trr}(\mathfrak n', \mathfrak h), \mathfrak{trd}(\mathfrak n, \mathfrak h))$. However, Lemma \ref{lem maximzable and resultant} implies that both $\mathfrak{trd}(\mathfrak n, \mathfrak h)$ and $\mathfrak{trd}(\mathfrak n', \mathfrak h)$ are minimal in $\mathcal S'(\mathfrak{trr}(\mathfrak n, \mathfrak h) ,\mathfrak p)=\mathcal S'(\mathfrak{trr}(\mathfrak n', \mathfrak h), \mathfrak p)$. Hence, by induction on $l_{rel}(\mathfrak n)$,
\[  \mathfrak{trd}(\mathfrak n, \mathfrak h)=\mathfrak{trd}(\mathfrak n', \mathfrak h) .
\]
But then, the disjointness assumption implies that ${}^-\Delta' \in \mathfrak n$. But $ {}^-\Delta' \not\subset \Delta$ and ${}^-\Delta' \subset \Upsilon(\Delta, \mathfrak h)$. By Lemma \ref{lem one segment non local min}, this implies 
\[   |\left\{ \widetilde{\Delta} \in \mathfrak n[a]: {}^-\Delta'\subset \widetilde{\Delta}  \right\}|< |\left\{ \widetilde{\Delta} \in \mathfrak{fs}(\mathfrak n, \mathfrak h):  {}^-\Delta'\subset \widetilde{\Delta} \right\}| .
\]
Hence, $(\mathfrak n, \mathfrak h)$ is locally minimizable. This contradicts to Lemma \ref{lem maxmize intersect}.
 \end{proof}

\begin{theorem} \label{thm unique module}
Let $\pi \in \mathrm{Irr}_{\rho}$ and let $\tau$ be a simple quotient of $\pi^{(i)}$ for some $i$. Then $\mathcal S(\pi, \tau)$ has a unique minimal element if $\mathcal S(\pi, \tau)\neq \emptyset$. Here the minimality is with respect to $\leq_Z$. 
\end{theorem}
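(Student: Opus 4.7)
The plan is to reduce this representation-theoretic statement to the purely combinatorial Proposition \ref{prop unique combin} via the translation in Theorem \ref{thm isomorphic derivatives}. First I would fix some $\mathfrak m_0 \in \mathcal S(\pi, \tau)$ (which exists by the nonemptyness hypothesis) and set $\mathfrak h = \mathfrak{hd}(\pi)$ and $\mathfrak p = \mathfrak r(\mathfrak m_0, \mathfrak h)$. Because $D_{\mathfrak m_0}(\pi) \cong \tau \neq 0$, the multisegment $\mathfrak m_0$ is admissible to $\pi$ and hence, by Theorem \ref{thm isomorphic derivatives}, admissible to $\mathfrak h$, so $\mathfrak p \neq \infty$.

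The key observation is then the equality of sets
\[
\mathcal S(\pi, \tau) \;=\; \mathcal S'(\mathfrak h, \mathfrak p).
\]
For the inclusion $\subseteq$, any $\mathfrak n \in \mathcal S(\pi, \tau)$ satisfies $D_{\mathfrak n}(\pi) \cong \tau \cong D_{\mathfrak m_0}(\pi)$ with both sides nonzero, so Theorem \ref{thm isomorphic derivatives} applies and yields $\mathfrak r(\mathfrak n, \mathfrak h) = \mathfrak r(\mathfrak m_0, \mathfrak h) = \mathfrak p$. Conversely, for $\mathfrak n \in \mathcal S'(\mathfrak h, \mathfrak p)$ one has $\mathfrak r(\mathfrak n, \mathfrak h) = \mathfrak p \neq \infty$, so $\mathfrak n$ is admissible to $\mathfrak h$; a little care is needed here, because Theorem \ref{thm isomorphic derivatives} is stated under the hypothesis that $\mathfrak n$ be admissible to $\pi$, but the proof in \cite{Ch22+d} shows that admissibility to $\pi$ is equivalent to admissibility to $\mathfrak{hd}(\pi) = \mathfrak h$, so we indeed get $D_{\mathfrak n}(\pi) \cong D_{\mathfrak m_0}(\pi) \cong \tau$, i.e., $\mathfrak n \in \mathcal S(\pi, \tau)$.

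With the set-theoretic identification in hand, the partial order $\leq_Z$ is the same on both sides since it is defined purely on $\mathrm{Mult}_{\rho}$. Proposition \ref{prop unique combin} provides a unique $\leq_Z$-minimal element of $\mathcal S'(\mathfrak h, \mathfrak p)$, and transferring across the identification gives a unique $\leq_Z$-minimal element of $\mathcal S(\pi, \tau)$, as desired.

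The heavy lifting has already been done in Proposition \ref{prop unique combin}, whose proof rests on Lemmas \ref{lem maximzable and resultant} and \ref{lem maxmize intersect} characterizing minimality through local minimizability of the terms in the fine chain. The only potentially delicate point in the present deduction is the equivalence of admissibility to $\pi$ and to $\mathfrak{hd}(\pi)$, which must be invoked to freely move between $\mathcal S(\pi, \tau)$ and $\mathcal S'(\mathfrak h, \mathfrak p)$; this is precisely the content carried along in Theorem \ref{thm isomorphic derivatives} and is what makes the combinatorial reduction lossless.
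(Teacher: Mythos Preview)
Your proposal is correct and follows exactly the paper's approach: the paper's proof of Theorem \ref{thm unique module} is the one-line deduction ``This follows from Proposition \ref{prop unique combin} and Theorem \ref{thm isomorphic derivatives},'' and you have simply unpacked the identification $\mathcal S(\pi,\tau)=\mathcal S'(\mathfrak{hd}(\pi),\mathfrak p)$ that makes this work. Your remark about the equivalence of admissibility to $\pi$ and to $\mathfrak{hd}(\pi)$ is well placed and is indeed part of what Theorem \ref{thm isomorphic derivatives} carries over from \cite{Ch22+d}.
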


\begin{proof}
This follows from Proposition \ref{prop unique combin} and Theorem \ref{thm isomorphic derivatives}.
\end{proof}

\section{Examples of minimality} \label{s example minimal}

\subsection{Minimality for the highest derivative multisegment} \label{ss proof of thm}

For $\pi \in \mathrm{Irr}_{\rho}$, let $\pi^-$ be the Bernstein-Zelevinsky highest derivative of $\pi$ in the sense of \cite[Section 3.5]{BZ77}. We shall not need the explicit definition, but we only use \cite[Theorem 1.3]{Ch22+d}.

\begin{theorem} \label{thm minimal highest derivative}
Let $\pi \in \mathrm{Irr}_{\rho}$. Then $\mathfrak{hd}(\pi)$ is minimal in $\mathcal S(\pi, \pi^-)$.
\end{theorem}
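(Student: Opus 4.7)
The plan is to invoke Lemma \ref{lem maximzable and resultant}, which reduces the minimality of $\mathfrak{hd}(\pi)$ in $\mathcal S(\pi,\pi^-)$ to a concrete combinatorial statement about the fine chain $\mathrm{fc}_{\mathfrak{hd}(\pi)}(\mathfrak{hd}(\pi))$. First, by \cite[Theorem 1.3]{Ch22+d}, $D_{\mathfrak{hd}(\pi)}(\pi)\cong\pi^-$, so $\mathfrak{hd}(\pi)\in\mathcal S(\pi,\pi^-)$. Theorem \ref{thm isomorphic derivatives} translates membership in $\mathcal S(\pi,\pi^-)$ to the condition $\mathfrak r(\cdot,\mathfrak{hd}(\pi))=\mathfrak r(\mathfrak{hd}(\pi),\mathfrak{hd}(\pi))$. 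Thus it suffices to show that no $\mathfrak n'\lneq_Z\mathfrak{hd}(\pi)$ satisfies $\mathfrak r(\mathfrak n',\mathfrak{hd}(\pi))=\mathfrak r(\mathfrak{hd}(\pi),\mathfrak{hd}(\pi))$, and by Lemma \ref{lem maximzable and resultant} this holds provided no term $(\mathfrak n_j,\mathfrak h_j)$ in the fine chain $\mathrm{fc}_{\mathfrak{hd}(\pi)}(\mathfrak{hd}(\pi))$ is locally minimizable.

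The key computation I expect to make is that the fine chain stabilizes to the ``diagonal'': $\mathfrak n_j=\mathfrak h_j$ for every $j\geq 0$. The base case is $\mathfrak n_0=\mathfrak h_0=\mathfrak{hd}(\pi)$. For the inductive step, assume $\mathfrak n=\mathfrak h$, let $a$ be the smallest integer with $\mathfrak n[a]\neq\emptyset$, and write $\mathfrak n[a]=\{\Delta_1,\ldots,\Delta_k\}$. Since each $\Delta_i$ lies in $\mathfrak h$, Lemma \ref{lem removal process}(3) gives $\Upsilon(\Delta_1,\mathfrak h)=\Delta_1$, and iterating (since removing $\Delta_1,\ldots,\Delta_{i-1}$ leaves $\Delta_i$ still as an element of the resulting multisegment), $\Upsilon(\Delta_i,\mathfrak h-\Delta_1-\cdots-\Delta_{i-1})=\Delta_i$. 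Hence $\mathfrak{fs}(\mathfrak n,\mathfrak h)=\mathfrak n[a]=\mathfrak h[a]$. Comparing the definitions of $\mathfrak{trr}$ and $\mathfrak{trd}$ then yields $\mathfrak{trr}(\mathfrak n,\mathfrak h)=\mathfrak{trd}(\mathfrak n,\mathfrak h)$, which completes the induction.

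With this in hand, at any level $j$ let $a_j$ denote the minimal starting point in $\mathfrak n_j$. The computation above shows $\mathfrak{fs}(\mathfrak n_j,\mathfrak h_j)=\mathfrak n_j[a_j]$, so for any segment $\overline{\Delta}\in\mathfrak n_j[a_j+1]$ one has
\[ |\{\Delta\in\mathfrak n_j[a_j]:\overline{\Delta}\subset\Delta\}|=|\{\Delta\in\mathfrak{fs}(\mathfrak n_j,\mathfrak h_j):\overline{\Delta}\subset\Delta\}|, \]
contradicting the strict inequality in Definition \ref{def minimizable function}. Hence no $(\mathfrak n_j,\mathfrak h_j)$ is locally minimizable, and Lemma \ref{lem maximzable and resultant} finishes the proof.

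I don't foresee a significant obstacle: the entire argument rests on the ``diagonal'' structure $\mathfrak n=\mathfrak h$ being preserved throughout the fine chain, which in turn is a direct consequence of Lemma \ref{lem removal process}(3). The only mildly subtle point is keeping track of multiplicities when the same segment appears several times in $\mathfrak{hd}(\pi)[a]$, but this is handled transparently by the multiset convention together with Lemma \ref{lem independent ordering}.
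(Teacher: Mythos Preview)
Your proof is correct, but it takes a different route from the paper's own argument. The paper reduces minimality via Theorem~\ref{thm closed under zelevinsky} (convexity) to showing that any elementary intersection--union $\mathfrak n$ of $\mathfrak{hd}(\pi)$ satisfies $D_{\mathfrak n}(\pi)=0$; it then argues this directly from the \emph{defining maximality} of $\mathfrak{hd}(\pi)$: the intersection--union forces $\mathfrak n[a_1]\not\leq^a_{a_1}\mathfrak{hd}(\pi)[a_1]$, and Theorems~\ref{thm isomorphic derivatives} and~\ref{thm effect of Steinberg} then give the vanishing of the derivative. Your approach instead stays entirely inside the combinatorial fine-chain machinery: you observe that the fine chain $\mathrm{fc}_{\mathfrak{hd}(\pi)}(\mathfrak{hd}(\pi))$ satisfies $\mathfrak n_j=\mathfrak h_j$ at every step (the ``diagonal'' property, coming from $\Upsilon(\Delta,\mathfrak h)=\Delta$ when $\Delta\in\mathfrak h$), whence $\mathfrak{fs}(\mathfrak n_j,\mathfrak h_j)=\mathfrak n_j[a_j]$ and local minimizability fails trivially; Lemma~\ref{lem maximzable and resultant} then finishes. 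The paper's proof is shorter and appeals directly to the representation-theoretic meaning of $\mathfrak{hd}(\pi)$, whereas yours is a nice illustration that Lemma~\ref{lem maximzable and resultant} is already strong enough to recover this case with no extra representation-theoretic input. One minor comment: your citation of Lemma~\ref{lem removal process}(3) for $\Upsilon(\Delta,\mathfrak h)=\Delta$ is slightly off---that lemma gives $\mathfrak r(\Delta,\mathfrak h)=\mathfrak h-\Delta$, and the statement about $\Upsilon$ follows immediately from Step~(1) of Definition~\ref{def removal process}.
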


\begin{proof}
It is shown in \cite[Theorem 1.3]{Ch22+d} that $D_{\mathfrak{hd}(\pi)}(\pi)\cong \pi^-$. Theorem \ref{thm closed under zelevinsky} reduces to show that if $\mathfrak n$ is a multisegment obtained by an elementary intersection-union process from $\mathfrak{hd}(\pi)$, then $D_{\mathfrak n}(\pi)=0$. 

Let $\Delta_1=[a_1,b_1]_{\rho}, \Delta_2=[a_2,b_2]_{\rho}$ be two linked segments in $\mathfrak{hd}(\pi)$. Relabeling if necessary, we assume that $a_1<a_2$. Define
\[  \mathfrak n=\mathfrak{hd}(\pi)-\left\{ \Delta_1, \Delta_2 \right\}+\Delta_1\cup \Delta_2+ \Delta_1\cap \Delta_2 . \]
Then, $\mathfrak n[e]=\mathfrak{hd}(\pi)[e]$ for any $e <a_1$ and $\mathfrak n[a_1]\not\leq_{a_1}\mathfrak{hd}(\pi)[a_1]$. 

Let $c$ be the smallest integer such that $\mathfrak{hd}(\pi)[c]\neq 0$. By Theorems \ref{thm isomorphic derivatives} and \ref{thm effect of Steinberg}, 
\[  D_{\mathfrak n[a_1]} \ldots D_{\mathfrak n[c]}(\pi)=D_{\mathfrak n[a_1]}\circ D_{\mathfrak{hd}(\pi)[a_1-1]}\circ \ldots \circ D_{\mathfrak{hd}(\pi)[c]}(\pi) =0 ,
\]
and so $\mathfrak n \notin \mathcal S(\pi, \pi^-)$. 
\end{proof}

\subsection{Minimal multisegment for the generic case}

\begin{proposition} \label{prop unique generic}
Let $\pi \in \mathrm{Irr}_{\rho}$ be generic. Let $\tau$ be a (generic) simple quotient of $\pi^{(i)}$ for some $i$. Then the minimal multisegment in $\mathcal S(\pi, \tau)$ is generic i.e. any two segments in the minimal multisegment are unlinked.
\end{proposition}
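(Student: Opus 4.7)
The approach is to argue by contradiction against the unique $\leq_Z$-minimum $\mathfrak m_0 \in \mathcal S(\pi, \tau)$ produced by Theorem \ref{thm unique module}: assume $\mathfrak m_0$ contains a linked pair, extract a strict predecessor $\mathfrak m_0' <_Z \mathfrak m_0$ via an elementary intersection-union operation, and contradict minimality by showing $\mathfrak m_0' \in \mathcal S(\pi, \tau)$. Before starting, I would record that $\mathfrak h := \mathfrak{hd}(\pi)$ is generic (since $\pi$ is), that $\tau$ is generic (by the remark after Definition \ref{def removal process} via \cite{Ch21}, so $\mathfrak q := \mathfrak{hd}(\tau)$ is also generic), and that every intermediate $\mathfrak h_j$ of any fine chain $\mathrm{fc}_{\mathfrak h}(\cdot)$ remains generic, since the operation $[c,d]_\rho \mapsto [c+1,d]_\rho$ preserves pairwise unlinkedness.

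Write the linked pair as $\Delta = [a,b]_\rho, \Delta' = [a',b']_\rho \in \mathfrak m_0$ with $a < a'$ and $b < b'$ (so $a' \leq b+1$), and put $\mathfrak m_0' := \mathfrak m_0 - \Delta - \Delta' + (\Delta \cup \Delta') + (\Delta \cap \Delta') <_Z \mathfrak m_0$. To show $\mathfrak r(\mathfrak m_0', \mathfrak h) = \mathfrak r(\mathfrak m_0, \mathfrak h)$, Theorem \ref{thm isomorphic derivatives} and Lemma \ref{lem maxmize intersect} reduce matters to exhibiting a step $j$ of $\mathrm{fc}_{\mathfrak h}(\mathfrak m_0)$ at which $(\mathfrak n_j, \mathfrak h_j)$ is locally minimizable. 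The plan is to take $j$ to be the step where $a'-1$ is the smallest starting point of $\mathfrak n_j$: since $a'-1 \leq b$, the descendant $[a'-1, b]_\rho$ of $\Delta$ still belongs to $\mathfrak n_j[a'-1]$, while $\Delta'$ remains in $\mathfrak n_j[a']$. Then I would verify the hypotheses of Lemma \ref{lem one segment non local min} with $\Delta_{\mathrm{test}} := [a'-1, b]_\rho$ and $\overline\Delta := \Delta'$: the condition $\overline\Delta \not\subset \Delta_{\mathrm{test}}$ is immediate from $b' > b$, while $\overline\Delta \subset \Upsilon(\Delta_{\mathrm{test}}, \mathfrak h_j) = [a'-1, c]_\rho$ amounts to $c \geq b'$. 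For the latter, truncations at fine-chain steps $< j$ only touch levels $\leq a'-1$, so $\mathfrak h_j[a'] = \mathfrak h[a']$, and Lemma \ref{lem removal process}(2) applied along the ascending-order removal defining $\mathfrak r(\mathfrak m_0, \mathfrak h)$ gives admissibility of $\Delta'$ already to $\mathfrak h$ itself, producing some $[a', c']_\rho \in \mathfrak h_j[a']$ with $c' \geq b'$; genericity of $\mathfrak h_j$ then forces every $[a'-1, c^*]_\rho \in \mathfrak h_j[a'-1]$ to satisfy $c^* \geq c'$ (else $[a'-1, c^*]_\rho$ and $[a', c']_\rho$ would be linked), so in particular $c \geq c' \geq b'$.

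The main obstacle is exactly this inclusion $\overline\Delta \subset \Upsilon(\Delta_{\mathrm{test}}, \mathfrak h_j)$: it rests on the two bookkeeping observations above—that $\mathfrak h_j[a'] = \mathfrak h[a']$ at the chosen step (by the scope of fine-chain truncations) and that $\Delta'$'s admissibility to the relevant intermediate multisegment depends only on $\mathfrak h[a']$ (by Lemma \ref{lem removal process}(2) together with the ascending-order convention). Once these are in place, the generic chain structure at level $a'-1$ of $\mathfrak h_j$ closes the argument, Lemma \ref{lem maxmize intersect} yields the desired $\mathfrak n' <_Z \mathfrak m_0$ in $\mathcal S(\pi, \tau)$, and the contradiction with the minimality of $\mathfrak m_0$ forces $\mathfrak m_0$ to be generic.
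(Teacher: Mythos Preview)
Your approach is genuinely different from the paper's. The paper argues representation-theoretically: it proves (Lemma~\ref{lem jacquet generic quotient}) that for generic $\pi$ every simple submodule $\tau\boxtimes\omega$ of $\pi_{N_i}$ has both factors generic, writes $\omega\cong\mathrm{St}(\Delta_1)\times\cdots\times\mathrm{St}(\Delta_k)$ with the $\Delta_i$ mutually unlinked, observes that $\{\Delta_1,\ldots,\Delta_k\}\in\mathcal S(\pi,\tau)$, and concludes by uniqueness since a generic multisegment is automatically $\leq_Z$-minimal. Your route is purely combinatorial through the removal process and local minimizability, which would be attractive if it worked, since it would avoid the Jacquet-module input.

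However, the argument has two concrete gaps at the key step. First, the claim that each $\mathfrak h_j$ in the fine chain remains generic is false: take $\mathfrak h=\{[0,10]_\rho,[0,2]_\rho\}$ (generic) and any admissible $\mathfrak m_0$ whose level-$0$ part contains $[0,4]_\rho$; then $\Upsilon([0,4]_\rho,\mathfrak h)=[0,10]_\rho$ and $\mathfrak h_1=\{[1,10]_\rho,[0,2]_\rho\}$, in which $[0,2]_\rho$ and $[1,10]_\rho$ are linked. The operation $[c,d]_\rho\mapsto[c+1,d]_\rho$ does \emph{not} preserve pairwise unlinkedness when another segment $[c,d']_\rho$ with $d'<d$ stays untruncated. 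Second, the assertion that admissibility of $\Delta'$ in the ascending removal forces some $[a',c']_\rho\in\mathfrak h[a']$ with $c'\geq b'$ is also false: with $\mathfrak h=\{[0,5]_\rho,[1]_\rho\}$ and $\mathfrak m_0=\{[0,1]_\rho,[1,2]_\rho\}$ one checks $\mathfrak r(\mathfrak m_0,\mathfrak h)=\{[3,5]_\rho\}\neq\infty$, yet $\mathfrak h[1]=\{[1]_\rho\}$ contains no segment ending at $\geq 2$. Lemma~\ref{lem removal process}(2) only controls levels \emph{below} $a(\Delta)$, so it cannot transport admissibility of $\Delta'$ back from an intermediate stage to $\mathfrak h$ itself at level $a'$.

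In the second example your desired containment $\Delta'\subset\Upsilon(\Delta_{\mathrm{test}},\mathfrak h_j)$ does happen to hold, so the strategy may be salvageable, but the justification you give for it does not stand. One possible repair is to trace the segment of $\mathfrak h$ that ultimately makes $\Delta'$ admissible (some $[e',c']_\rho\in\mathfrak h$ with $e'\leq a'$, $c'\geq b'$, obtained because every segment of any $\mathfrak r(\cdot,\mathfrak h)$ is a left-truncation of one in $\mathfrak h$) and compare it in the \emph{original} generic $\mathfrak h$ with the ancestor of $\Upsilon(\Delta_{\mathrm{test}},\mathfrak h_j)$; but carrying this through requires controlling how these two ancestors evolve along the fine chain, which your write-up does not address.
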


One may prove the above proposition by some analysis of derivative resultant multisegments. We shall give another proof using the following lemma:

\begin{lemma} \label{lem jacquet generic quotient}
Let $\pi \in \mathrm{Irr}_{\rho}(G_n)$ be generic. For any $i$, and for any irreducible submodule $\tau_1 \boxtimes \tau_2$ of $\pi_{N_i}$ as $G_{n-i}\times G_i$-representation, both $\tau_1$ and $\tau_2$ are generic.
\end{lemma}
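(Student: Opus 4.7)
The plan is to turn the hypothesis into a surjection onto $\pi$ via adjointness, and then read off genericity of the two factors from the top Bernstein--Zelevinsky derivative.

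Let $P_i$ denote the standard parabolic of $G_n$ with unipotent radical $N_i$ and Levi $G_{n-i}\times G_i$, and let $\bar{P}_i$ be its opposite. By Bernstein's second adjointness, the embedding $\tau_1\boxtimes \tau_2 \hookrightarrow \pi_{N_i}$ corresponds to a nonzero $G_n$-intertwiner from the parabolic induction of $\tau_1\boxtimes\tau_2$ along $\bar{P}_i$ to $\pi$, which is surjective by irreducibility of $\pi$. Conjugating $\bar{P}_i$ to the standard parabolic of type $(i, n-i)$ by the Weyl element that swaps the two block positions in the Levi identifies the induction with $\tau_2\times \tau_1$, so $\pi$ is a quotient of $\tau_2\times \tau_1$.

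Next, I apply the top BZ derivative $(-)^{(n)}$. Each BZ derivative functor is exact, and for an irreducible representation of $G_n$ the nonvanishing of $(-)^{(n)}$ is equivalent to genericity (the top derivative being identified with the space of Whittaker functionals). The surjection $\tau_2\times\tau_1 \twoheadrightarrow \pi$ therefore gives a surjection on top derivatives; since $\pi^{(n)}\neq 0$ by genericity, we have $(\tau_2\times\tau_1)^{(n)}\neq 0$. At the top index the Bernstein--Zelevinsky filtration on the derivative of a parabolic induction collapses to a single term, yielding the Leibniz identity $(\tau_2\times\tau_1)^{(n)} \cong \tau_2^{(i)}\otimes \tau_1^{(n-i)}$ as vector spaces, and so the nonvanishing of this tensor forces both $\tau_1^{(n-i)}$ and $\tau_2^{(i)}$ to be nonzero, which is equivalent to $\tau_1$ and $\tau_2$ being generic.

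The hard part is the top-index Leibniz identity; although a standard consequence of the BZ filtration on the derivative of an induced representation, one should confirm it behaves compatibly with the $\rho$-adapted conventions used in the paper. Another point to handle carefully is the direction in Bernstein's second adjointness, so that the map after adjunction goes from the induced representation onto $\pi$ (rather than the reverse).
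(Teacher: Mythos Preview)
Your proof is correct and takes a genuinely different route from the paper's. The paper exploits the explicit structure $\pi\cong\mathrm{St}(\Delta_1)\times\cdots\times\mathrm{St}(\Delta_r)$ for unlinked segments, invokes \cite[Corollary 2.6]{Ch21} to see that any simple quotient of $\pi_{N_i}$ has generic first factor $\tau_1$, and then applies a left (twisted) Jacquet functor to the surjection $\pi_{N_i}\twoheadrightarrow\tau_1\boxtimes\tau_2$ to reduce the genericity of $\tau_2$ to a second application of the same corollary on the other side. Your argument bypasses the external reference entirely: Bernstein's second adjointness turns the embedding into a surjection $\tau_2\times\tau_1\twoheadrightarrow\pi$, and a single application of the exact functor $(-)^{(n)}$ together with the top-degree Leibniz identity $(\tau_2\times\tau_1)^{(n)}\cong\tau_2^{(i)}\otimes\tau_1^{(n-i)}$ forces both top derivatives to be nonzero at once. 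Your approach is shorter, symmetric in the two factors, and self-contained; the paper's approach, on the other hand, stays closer to the derivative machinery $D_\Delta$ developed in \cite{Ch21,Ch22+d} and so fits more organically into that framework. Your two caveats are well placed but both check out: the direction of second adjointness indeed yields a map \emph{into} $\pi$ (since $r_{P_i}$ is right adjoint to induction from $\bar P_i$), and the top-degree Leibniz identity is insensitive to the $\rho$-adapted conventions because the filtration on $(\sigma_1\times\sigma_2)^{(k)}$ collapses to the single term $\sigma_1^{(n_1)}\otimes\sigma_2^{(n_2)}$ when $k=n_1+n_2$.
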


\begin{proof}
Recall that for a generic $\pi$, 
\[   \pi \cong \mathrm{St}(\Delta_1) \times \ldots \times \mathrm{St}(\Delta_r) 
\]
for mutually unlinked segments $\Delta_1, \ldots, \Delta_r$. 

Now, with a suitable arrangment on the orderings of the segments, one may argue as in \cite[Corollary 2.6]{Ch21} to have that a simple quotient of $\pi_{N_i}$ takes the form $\tau \boxtimes \omega$ for some generic $\tau \in \mathrm{Irr}(G_{n-i})$. Hence it remains to show $\omega$ is also generic. We consider
\[   \pi_{N_i} \twoheadrightarrow \tau \boxtimes \omega 
\]
and taking the twisted Jacquet functor on the $G_{n-i}$-parts yields that
\[  {}^{(n-i)}\pi \twoheadrightarrow \omega .
\]
Now using \cite[Corollary 2.6]{Ch21} for left derivatives, we have that $\omega$ is also generic as desired.
\end{proof}

\noindent
{\it Proof of Proposition \ref{prop unique generic}.} Let $\pi \in \mathrm{Irr}_{\rho}$ be generic and let $\tau$ be a simple (generic) quotient of $\pi^{(i)}$. Then, $\pi_{N_i}$ has a simple quotient of the form $\tau \boxtimes \omega$ for some $\omega \in \mathrm{Irr}_{\rho}(G_i)$. By Lemma \ref{lem jacquet generic quotient}, $\omega$ is also  generic and hence $\omega \cong \mathrm{St}(\Delta_1)\times \ldots \times \mathrm{St}(\Delta_k)$ for some mutually unlinked segments $\Delta_1, \ldots, \Delta_k$. Now, $\pi$ is the unique submodule of $\tau \times\mathrm{St}(\Delta_1)\times \ldots \times \mathrm{St}(\Delta_k)$. By a standard argument, we have that:
\[ D_{\Delta_1}\circ \ldots \circ  D_{\Delta_k}(\pi) \cong \tau .
\]
Hence, $\left\{ \Delta_1, \ldots, \Delta_k \right\} \in \mathcal S(\pi, \tau)$. The minimality of $\left\{\Delta_1, \ldots, \Delta_k\right\}$ is automatic since any generic multisegment is minimal in $\mathrm{Mult}_{\rho}$ with respect to $\leq_Z$. Now the statement follows from the uniqueness in Theorem \ref{thm unique module}. \qed

\section{Non-uniqueness of maximal elements in $\mathcal S(\pi, \tau)$} \label{no unique max element}

\subsection{Highest derivative multisegments}

Let $\pi \in \mathrm{Irr}_{\rho}$. Then $\mathcal S(\pi, \pi^-)$ contains a unique maximal multisegment, and such multisegment has all segments to be singletons. Combining with Theorem \ref{thm closed under zelevinsky}, one can describe all multisegments in $\mathcal S(\pi, \pi^-)$.

\subsection{Failure of uniqueness of maximality}
As mentioned in \cite{Ch22+d}, in general, derivatives of cuspidal representations are not enough for constructing all simple quotients of Bernstein-Zelevinsky derivatives, and the set $\mathcal S(\pi, \tau)$ may contain some multisegments whose segments are not all singletons. We give an example to show that in general, there is no uniqueness for $\leq_Z$-maximal elements in $\mathcal S(\pi, \tau)$. 

Let 
\[ \mathfrak h=\left\{ [0,3]_{\rho}, [0,1]_{\rho}, [1,2]_{\rho}, [1,2]_{\rho}, [2]_{\rho}, [3]_{\rho} \right\} .
\]
Let $\mathfrak n=\left\{ [0,3]_{\rho}, [1,2]_{\rho} \right\}$. Then 
\[ \mathfrak r:=\mathfrak r(\mathfrak n, \mathfrak h)=\left\{ [0,1]_{\rho}, [1,2]_{\rho}, [2]_{\rho}, [3]_{\rho} \right\} =\mathfrak h-\mathfrak n.
\]

We claim that 
\[ \mathcal S'(\mathfrak h, \mathfrak r)=\left\{ \mathfrak n, \left\{ [0,3]_{\rho}, [1]_{\rho}, [2]_{\rho} \right\}, \left\{ [0,2]_{\rho}, [1,3]_{\rho} \right\} \right\} .
\]
It is direct to check that the three elements are in $\mathcal S(\mathfrak h, \mathfrak r)$, and the last two elements are both maximal.

To see that there are no more elements, we first observe that any multisegment $\mathfrak n'$ in $\mathcal S'(\mathfrak h, \mathfrak r)$ has only one segment $\widetilde{\Delta}$ with $a(\widetilde{\Delta})=\nu^0$. By considering the first segment in the removal sequence $\mathfrak r(\widetilde{\Delta}, \mathfrak h)$, we note that $[0]_{\rho}, [0,1]_{\rho} \notin \mathcal S'(\mathfrak h, \mathfrak r)$. In other words, $[0,2]_{\rho} $ or $[0,3]_{\rho} $ in $\mathcal S'(\mathfrak h, \mathfrak r)$. It remains to check that the following three elements:
\[  \left\{ [0,2]_{\rho}, [1]_{\rho}, [2]_{\rho}, [3]_{\rho} \right\}, \left\{ [0,2]_{\rho} ,[1,2]_{\rho} ,[3]_{\rho}  \right\}, \left\{ [0,2]_{\rho} , [1]_{\rho} , [2,3]_{\rho}  \right\}
\]
are not in $\mathcal S'(\mathfrak h, \mathfrak r)$, which is straightforward.

To show that the maximality for $\mathcal S(\pi, \tau)$ is not unique in general, one still needs to ask whether there exists $\pi \in \mathrm{Irr}_{\rho}$ such that $\mathfrak{hd}(\pi)=\mathfrak h$. This is indeed the case and we shall postpone proving this in the sequel \cite{Ch22+e}, where we shall need such fact in a more substantial way.

\section{Minimality for two segment case} \label{s dagger property}

In this section, we study the minimality for two segment cases.

\subsection{Non-overlapping property} \label{ss non-overlapping property}
\begin{definition} \label{def nonoverlapping property}
Let $\mathfrak h \in \mathrm{Mult}_{\rho}$. Let $\Delta \in \mathrm{Seg}_{\rho}$ be admissible to $\mathfrak h$. Let $\Delta' \in \mathrm{Seg}_{\rho}$ linked to $\Delta$ with $\Delta'>\Delta$. We say that the triple $(\Delta, \Delta', \mathfrak h)$ satisfies the {\it  non-overlapping property} if for the shortest segment $\overline{\Delta}$ in the removal sequence for $(\Delta, \mathfrak h)$ that contains $\nu^{-1}a(\Delta')$,  we have $\Delta'\not\subset \overline{\Delta}$. (We remark that for later applications, we do not impose the condition that $\Delta'$ is admissible to $\mathfrak h$.)

\end{definition}

We now briefly explain the connection to local minimizability. We use notations in Definition \ref{def nonoverlapping property} and simply consider $a(\Delta)\cong \nu^{-1}a(\Delta')$ and set $a(\Delta')=\nu^{a+1}\rho$ for some $a \in \mathbb Z$. We let $\mathfrak n=\left\{ \Delta, \Delta'\right\}$ and in such case $\mathfrak{fs}(\mathfrak n, \mathfrak h)=\left\{ \Upsilon(\Delta, \mathfrak h)\right\}$. Now, note that $(\Delta, \Delta', \mathfrak h)$ satisfies the non-overlapping property if and only if 
\[  | \left\{ \widetilde{\Delta} \in \mathfrak{fs}(\mathfrak n, \mathfrak h) : \Delta' \subset \widetilde{\Delta} \right\}| = 0  \]
On the other hand, $\mathfrak n[a]=\left\{ \Delta \right\}$ and so 
\[   |\left\{ \widetilde{\Delta} \in  \mathfrak n[a] : \Delta' \subset \widetilde{\Delta} \right\}|=0 .
\]
Combining above, we have that $(\Delta, \Delta', \mathfrak h)$ satisfies the non-overlapping property if and only if $(\mathfrak n, \mathfrak h)$ is not locally minimizable.

\begin{example}
\begin{enumerate}
\item Let $\mathfrak h=\left\{ [0,7]_{\rho}, [3,6]_{\rho}, [6,10]_{\rho}  \right\}$. Let $\Delta=[0,5]_{\rho}$ and let $\Delta'=[6,7]_{\rho}$. Then $(\Delta, \Delta', \mathfrak h)$ satisfies the non-overlapping property.
\[ \xymatrix{     &    &    &    &   &    & \stackrel{6}{ \bullet} \ar@{-}[r]  & \stackrel{7}{ \bullet} \ar@{-}[r]  & \stackrel{8}{\bullet} \ar@{-}[r]  & \stackrel{9}{\bullet} \ar@{-}[r]  & \stackrel{10}{\bullet}    \\
  &  &    & \stackrel{3}{ {\color{blue} \bullet}} \ar@{-}[r]  & \stackrel{4}{{\color{blue} \bullet}} \ar@{-}[r]  & \stackrel{5}{{\color{blue} \bullet}} \ar@{-}[r]  & \stackrel{6}{\bullet}   &   &    &    &       \\
 \stackrel{0}{{\color{blue} \bullet}} \ar@{-}[r] & \stackrel{1}{{\color{blue}{\bullet}}} \ar@{-}[r]  & \stackrel{2}{{\color{blue} \bullet}} \ar@{-}[r]  & \stackrel{3}{\bullet} \ar@{-}[r]  & \stackrel{4}{\bullet} \ar@{-}[r]  & \stackrel{5}{\bullet} \ar@{-}[r]  & \stackrel{6}{\bullet} \ar@{-}[r]  & \stackrel{7}{\bullet}    &    &   &   }
\]
The blue points are those points removed by applying $\mathfrak r(\Delta,.)$. Note that the shortest segment in the removal sequence containing $[5]_{\rho}$ is $[3,6]_{\rho}$, which does not contain $[6,7]_{\rho}$. 
\item Let $\mathfrak h=\left\{ [0,8]_{\rho}, [3,6]_{\rho}, [6,10]_{\rho}  \right\}$. Let $\Delta=[0,7]_{\rho}$ and let $\Delta'=[6,8]_{\rho}$. Then $(\Delta, \Delta', \mathfrak h)$ does not satisfy the non-overlapping property. The graph for carrying out the removal sequence looks like:
\[ \xymatrix{     &    &    &    &   &    & \stackrel{6}{{\color{red} \bullet}} \ar@{-}[r]  & \stackrel{7}{{\color{red} \bullet}} \ar@{-}[r]  & \stackrel{8}{\bullet} \ar@{-}[r]  & \stackrel{9}{\bullet} \ar@{-}[r]  & \stackrel{10}{\bullet}    \\
  &  &    & \stackrel{3}{  \bullet} \ar@{-}[r]  & \stackrel{4}{ \bullet} \ar@{-}[r]  & \stackrel{5}{ \bullet} \ar@{-}[r]  & \stackrel{6}{\bullet}   &   &    &    &       \\
 \stackrel{0}{{\color{blue} \bullet}} \ar@{-}[r] & \stackrel{1}{{\color{blue}{\bullet}}} \ar@{-}[r]  & \stackrel{2}{{\color{blue} \bullet}} \ar@{-}[r]  & \stackrel{3}{{\color{blue}{\bullet}}} \ar@{-}[r]  & \stackrel{4}{{\color{blue} \bullet}} \ar@{-}[r]  & \stackrel{5}{{\color{blue} \bullet}} \ar@{-}[r]  & \stackrel{6}{{\color{blue} \bullet}} \ar@{-}[r]  & \stackrel{7}{{\color{blue} \bullet}} \ar@{-}[r]   &  \stackrel{8}{{\color{red} \bullet}}  &   &   }
\]
In the graph above, the segment $[0,8]_{\rho}$ contains $[5]_{\rho}$, and $[6,8]_{\rho} \subset [0,8]_{\rho}$. The blue points represent points 'deleted' under the removal process $\mathfrak r([0,7]_{\rho}, \mathfrak h)$ and the red points represent points 'deleted' under the subsequent removal process $\mathfrak r([6,8]_{\rho}, \mathfrak r([0,7]_{\rho}, \mathfrak h))$.
\end{enumerate}

\end{example}

\begin{lemma} \label{lem construct less minimal element}
Let $\mathfrak h \in \mathrm{Mult}_{\rho}$. Let $\Delta \in \mathrm{Seg}_{\rho}$ be admissible to $\mathfrak h$. Let $\Delta' \in \mathrm{Seg}_{\rho}$ be admissible to $\mathfrak r(\Delta, \mathfrak h)$. Suppose $\Delta'$ is linked to $\Delta$ with $\Delta'>\Delta$. Then $(\Delta, \Delta', \mathfrak h)$ does not satisfy the non-overlapping property  if and only if
\[  \mathfrak r(\left\{ \Delta \cap \Delta', \Delta\cup \Delta' \right\}, \mathfrak h)=\mathfrak r(\left\{ \Delta, \Delta' \right\}, \mathfrak h) .
\]
\end{lemma}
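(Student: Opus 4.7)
The plan is to translate the displayed equality into a local minimizability condition on the fine chain of $\mathfrak n := \{\Delta, \Delta'\}$, isolate the unique step in the fine chain where local minimizability can occur, and match this condition with the failure of the non-overlapping property by tracking how the fine chain interacts with the removal sequence for $(\Delta, \mathfrak h)$.

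Write $\Delta = [a_1, b_1]_{\rho}$ and $\Delta' = [a_2, b_2]_{\rho}$; the linking condition together with $\Delta' > \Delta$ forces $a_1 < a_2$ and $b_1 < b_2$. Since $\{\Delta \cap \Delta', \Delta \cup \Delta'\}$ is the unique multisegment strictly below $\mathfrak n$ in $\leq_Z$, the displayed equality is equivalent to the existence of some $\mathfrak n' \lneq_Z \mathfrak n$ with $\mathfrak r(\mathfrak n', \mathfrak h) = \mathfrak r(\mathfrak n, \mathfrak h)$; by Lemmas \ref{lem maximzable and resultant} and \ref{lem maxmize intersect}, this is in turn equivalent to some term $(\mathfrak n_j, \mathfrak h_j)$ of the fine chain $\mathrm{fc}_{\mathfrak h}(\mathfrak n)$ being locally minimizable.

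Setting $k = a_2 - 1 - a_1 \geq 0$, direct inspection of the fine chain shows that for $j < k$, $\mathfrak n_j = \{[a_1 + j, b_1]_{\rho}, \Delta'\}$ with $\mathfrak n_j[a_1 + j + 1] = \emptyset$; and for $j > k$, all remaining segments in $\mathfrak n_j$ start at or above $a_2$, so the relevant higher-level set is again empty. Hence the only candidate step for local minimizability is $j = k$, where $\mathfrak n_k[a_2 - 1] = \{[a_2 - 1, b_1]_{\rho}\}$ and $\mathfrak n_k[a_2] = \{\Delta'\}$. Since $b_2 > b_1$ forces $\Delta' \not\subset [a_2 - 1, b_1]_{\rho}$, Definition \ref{def minimizable function} reduces local minimizability at $j = k$ to the single condition $\Delta' \subset \Upsilon([a_2 - 1, b_1]_{\rho}, \mathfrak h_k)$.

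It remains to identify $\Upsilon([a_2 - 1, b_1]_{\rho}, \mathfrak h_k)$ with the shortest segment $\Xi^*$ in the removal sequence $\Xi_1 \supset \cdots \supset \Xi_r$ for $(\Delta, \mathfrak h)$ that contains $\nu^{a_2 - 1}\rho$. By induction on $i \leq k$, one verifies that $\Lambda_i := \Upsilon([a_1 + i, b_1]_{\rho}, \mathfrak h_i)$ shares the same right endpoint with the shortest $\Xi_j$ containing $\nu^{a_1 + i}\rho$; the base case $i = 0$ is $\Lambda_0 = \Xi_1$, and the inductive step splits according to whether the starting index of the next segment in the removal sequence equals $a_1 + i + 1$ (so that $\Lambda_{i+1}$ coincides with that segment) or is strictly larger (so that $\Lambda_{i+1} = {}^-\Lambda_i$ merely truncates the current active segment further), the $\prec^L$-minimality defining the removal sequence ruling out any other possibility. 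Taking $i = k$, both $\Upsilon([a_2 - 1, b_1]_{\rho}, \mathfrak h_k)$ and $\Xi^*$ share the same right endpoint; as the former starts at $a_2 - 1$ and the latter at some index $\leq a_2 - 1 < a_2$, the containments $\Delta' \subset \Upsilon([a_2 - 1, b_1]_{\rho}, \mathfrak h_k)$ and $\Delta' \subset \Xi^*$ are equivalent. The latter is precisely the failure of the non-overlapping property, closing the chain of equivalences. The main obstacle is this inductive identification, which requires a careful case analysis of how the strictly increasing starting indices of the removal sequence interact with the fine chain's stepwise truncations.
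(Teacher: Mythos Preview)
Your proof is correct and follows essentially the same route as the paper's: both directions rest on Lemmas \ref{lem maximzable and resultant} and \ref{lem maxmize intersect} to trade the displayed equality for local minimizability along the fine chain, and then on the observation (made explicit in Section \ref{ss non-overlapping property}) that, once only a single step is in play, local minimizability is exactly the failure of the non-overlapping property.

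The one organizational difference is where the reduction happens. The paper first invokes Lemma \ref{lem removal process}(1) together with the nesting property to cut $\Delta$ down to the case $a(\Delta)\cong\nu^{-1}a(\Delta')$, and then reads off the connection to local minimizability from Section \ref{ss non-overlapping property}. You instead keep $\Delta$ intact and walk the fine chain to its $k$-th term, carrying out an explicit induction showing that $\Upsilon([a_2-1,b_1]_{\rho},\mathfrak h_k)$ and the distinguished removal-sequence segment $\Xi^*$ share a right endpoint. This induction is precisely the content of the paper's reduction (each step of your fine chain is one application of Lemma \ref{lem removal process}(1)), so the two arguments are the same in substance; yours simply unpacks what the paper compresses into a single sentence.
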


\begin{proof}
Suppose $\mathfrak r(\left\{ \Delta \cap \Delta', \Delta \cup \Delta' \right\}, \mathfrak h)\neq \mathfrak r(\left\{ \Delta, \Delta' \right\}, \mathfrak h)$. Lemma \ref{lem removal process}(1) and the nesting property in the removal process reduce to the case that $a(\Delta)\cong \nu^{-1}a(\Delta')$. Now, showing not satisfying non-overlapping property is simply a reformulation of local minimizability by Lemma \ref{lem maxmize intersect}.

Suppose $\mathfrak r(\left\{ \Delta \cap \Delta', \Delta \cup \Delta' \right\}, \mathfrak h)=\mathfrak r(\left\{ \Delta, \Delta' \right\}, \mathfrak h)$. By Lemma \ref{lem removal process}(1), it again reduces to $a(\Delta)\cong \nu^{-1}a(\Delta')$. It then follows from Lemma \ref{lem maximzable and resultant} that $(\left\{ \Delta, \Delta' \right\}, \mathfrak h)$ is locally minimizable and so $(\Delta, \Delta', \mathfrak h)$ does not satisfy the non-overlapping property.
\end{proof}

\subsection{Intermediate segment property}

\begin{definition}
Let $\mathfrak h \in \mathrm{Mult}_{\rho}$. Let $\Delta \in \mathrm{Seg}_{\rho}$ be admissible to $\mathfrak h$. Let $\Delta' \in \mathrm{Seg}_{\rho}$ linked to $\Delta$ with $\Delta' >\Delta$. We say that the triple $(\Delta, \Delta', \mathfrak h)$ satisfies the {\it intermediate segment property} if there exists a segment $\widetilde{\Delta}$ in $\mathfrak h$ such that 
\begin{equation} \label{eqn intermediate segment prop}
 a(\Delta)\leq a(\widetilde{\Delta}) <a(\Delta'), \mbox{ and } b(\Delta) \leq b(\widetilde{\Delta}) <b(\Delta') . 
\end{equation}
\end{definition}

\subsection{Criteria in terms of $\eta$-invariants} \label{ss rel to epsilon}

Let $\mathfrak h \in \mathrm{Mult}_{\rho}$. For a segment $\Delta=[a,b]_{\rho}$ admissible to $\mathfrak h$, note that, by Theorem \ref{thm effect of Steinberg}, $\varepsilon_{\Delta}(\mathfrak{hd}(\pi))=\varepsilon_{\Delta}(\pi)$. Let 
\begin{eqnarray} \label{eqn segments}
  \eta_{\Delta}(\mathfrak h)=(\varepsilon_{[a,b]_{\rho}}(\mathfrak h), \varepsilon_{[a+1,b]_{\rho}}(\mathfrak h), \ldots, \varepsilon_{[b,b]_{\rho}}(\mathfrak h)) .
\end{eqnarray}
The $\eta$-invariant defined above plays an important role in defining a notion of {\it generalized GGP relevant pairs} in \cite{Ch22+b}. When $a=b$, it is sometimes called the highest $\rho$-derivative.

\begin{proposition} \label{prop dagger eta condition} 
Let $\mathfrak h\in \mathrm{Mult}_{\rho}$. Let $\Delta \in \mathrm{Seg}_{\rho}$ be admissible to $\mathfrak h$. Let $\Delta' \in \mathrm{Seg}_{\rho}$ be linked to $\Delta$ with $\Delta'>\Delta$. Then the following conditions are equivalent:
\begin{enumerate}
\item The triple $(\Delta, \Delta', \mathfrak h)$ satisfies the non-overlapping property.
\item $\eta_{\Delta'}(\mathfrak h) = \eta_{\Delta'}(\mathfrak r(\Delta, \mathfrak h))$.
\item The triple $(\Delta, \Delta', \mathfrak h)$ satisfies the intermediate segment property.
\end{enumerate}

\end{proposition}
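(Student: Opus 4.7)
The plan is to establish the three-way equivalence by proving (1)$\Leftrightarrow$(3) and (1)$\Leftrightarrow$(2) separately, exploiting the explicit combinatorics of the removal sequence. Write $\Delta=[a,b]_{\rho}$ and $\Delta'=[a',b']_{\rho}$; since $\Delta'>\Delta$, we have $a<a'$ and $b<b'$, and linkedness gives $b\geq a'-1$. Let $\Delta_1\supset\cdots\supset\Delta_r$ with $\Delta_j=[a_j,b_j]_{\rho}$ be the removal sequence for $(\Delta,\mathfrak h)$; then $\overline{\Delta}=\Delta_i$ with $i$ maximal such that $a_i\leq a'-1$, since the complementary containment condition $b_j\geq a'-1$ is automatic from $b_j\geq b\geq a'-1$. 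The non-overlapping property thus reads $b_i<b'$.

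For (1)$\Rightarrow$(3), I take $\widetilde{\Delta}=\overline{\Delta}$: the definition of the removal sequence yields $a=a_1\leq a_i<a'$ and $b\leq b_i$, while non-overlapping gives $b_i<b'$. For (3)$\Rightarrow$(1), I argue by contradiction: assume overlapping ($b_i\geq b'$) and that $\widetilde{\Delta}=[c,d]_{\rho}\in\mathfrak h$ witnesses (3). The boundary case $c=a$ is handled at step $1$, where the ``shortest segment'' rule for $\Delta_1$ forces $b_1\leq d<b'$, hence $b_i\leq b_1<b'$, contradicting overlapping. For $c>a$, let $j^*$ be the smallest index with $a_{j^*}\geq c$; the inequalities $a_{j^*-1}<c$ and $d<b'\leq b_{j^*-1}$ (by nesting, since $j^*-1\leq i$) show that $\widetilde{\Delta}$ is a valid candidate at step $j^*$. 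The $\prec^L$-minimality of $\Delta_{j^*}$ then forces $a_{j^*}=c$ and $b_{j^*}\leq d<b'$, and combining $c<a'$ with the maximality of $i$ gives $j^*\leq i$, so nesting yields $b_{j^*}\geq b_i\geq b'$, a contradiction. (The degenerate case $j^*=r+1$ is excluded because $\widetilde{\Delta}$ would then extend the removal sequence beyond $\Delta_r$.)

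For (1)$\Leftrightarrow$(2), I track how the removal process alters the tuple $\eta_{\Delta'}$. Only segments with endpoint $\geq b'$ can contribute to any component, so let $k$ be the largest index with $b_k\geq b'$. As $j$ runs over $1,\ldots,k$, the starting point of $\Delta_j$ is shifted from $a_j$ to $a_{j+1}$ (for $j<r$) or to $b+1$ (for $j=r$); after cancellation, the net change in the $c$-th component of $\eta_{\Delta'}$ (for $c\in[a',b']$) equals $1_{a_{k+1}=c}-1_{a=c}$ when $k<r$ and $1_{b+1=c}-1_{a=c}$ when $k=r$. The term $1_{a=c}$ always vanishes because $a<a'\leq c$. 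Non-overlapping corresponds to $k<i$, whence $a_{k+1}\leq a_i\leq a'-1<a'$, so every component is preserved; overlapping corresponds to $k\geq i$, and then either $a_{k+1}\in[a',b')$ (using $a_{k+1}\geq a_{i+1}\geq a'$ and $a_{k+1}\leq b_{k+1}<b'$) or $b+1\in[a',b']$ (using $b\geq a'-1$ and $b<b'$) produces a nonzero change, so $\eta$-invariance fails.

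The main subtleties will be the clean bookkeeping of the telescoping sum and the careful pinning of the displaced starting points $a_{k+1}$ (respectively $b+1$) inside $[a',b']$ in each overlapping subcase, together with the separate treatment of $k=r$ vs $k<r$ in (1)$\Leftrightarrow$(2) and of the boundary $c=a$ in (3)$\Rightarrow$(1). A secondary consideration is the case $i=r$ (where there is no $a_{i+1}$), which is absorbed into the $j^*=r+1$ extension argument above and the $k=r$ branch in the telescoping.
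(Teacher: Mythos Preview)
Your proof is correct. Both your argument and the paper's hinge on the same core observation---analyzing how the removal sequence $\Delta_1\supset\cdots\supset\Delta_r$ interacts with the window $[a',b']$---but the organization differs. The paper runs the cycle $(3)\Rightarrow(2)\Rightarrow(1)\Rightarrow(3)$: for $(3)\Rightarrow(2)$ it first locates an intermediate segment \emph{inside} the removal sequence (essentially your $(3)\Rightarrow(1)$ argument embedded as a sublemma) and then checks that no $\Delta_j$ or $\Delta_j^{tr}$ contributes to $\eta_{\Delta'}$ on either side; for $(2)\Rightarrow(1)$ it isolates the single index $l$ where $\Delta_l^{tr}$ contributes but $\Delta_l$ does not, and verifies a matching claim for the remaining indices. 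Your treatment of $(1)\Leftrightarrow(2)$ via the telescoping identity for the net change in each coordinate is cleaner and handles both directions at once; your direct $(3)\Rightarrow(1)$ by contradiction (using the $\prec^L$-minimality at step $j^*$ and the terminal case $j^*=r+1$) is more elementary than routing through $(2)$. The trade-off is that the paper's segment-by-segment accounting makes it transparent that in the non-overlapping case \emph{no} $\Delta_j$ or $\Delta_j^{tr}$ contributes to $\eta_{\Delta'}$ at all (a slightly stronger intermediate fact), whereas your telescoping only records that the net effect cancels.
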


\begin{proof}
We first prove (3) implies (2). Suppose (3) holds. We denote by 
\[  \Delta_1, \ldots, \Delta_r 
\]
the removal sequence for $(\Delta, \mathfrak h)$. Using (3) and (4) of the removal process in Definition \ref{def removal process}, those $\Delta_1, \ldots, \Delta_r$ in $\mathfrak h$ are replaced by their respective truncations, denoted by
\[  \Delta_1^{tr}, \ldots, \Delta_r^{tr} .\]
By using the intermediate segment property and the minimality condition in the removal process, there exists a segment of the form (\ref{eqn intermediate segment prop}) in the removal sequence for $(\Delta, \mathfrak h)$. Let $i^*$ be the smallest index such that $\Delta_{i^*}$ satisfies (\ref{eqn intermediate segment prop}). Note that, by considering $a(\Delta_j)$,
\[  \Delta_1, \ldots, \Delta_{i^*-1}
\]
do not contribute to $\eta_{\Delta'}(\mathfrak h)$ by definitions. By the definition of truncation and (\ref{eqn intermediate segment prop}) for $\Delta_{i^*}$, we have that $\Delta_1^{tr}, \ldots, \Delta_{i^*-1}^{tr}$ also do not contribute to $\eta_{\Delta'}(\mathfrak r(\Delta,\mathfrak h))$. From the choice of $\Delta_{i^*}$ and the nesting property, we also have that, by considering $b(\Delta_j)$,
\[ \Delta_{i^*}, \ldots, \Delta_r
\]
do not contribute to $\eta_{\Delta'}(\mathfrak h)$, and similarly, $\Delta_{i^*}^{tr}, \ldots, \Delta_r^{tr}$ do not contribute to $\eta_{\Delta'}(\mathfrak r(\Delta, \mathfrak h))$. Thus, we have that 
\[  \eta_{\Delta'}(\mathfrak h)=\eta_{\Delta'}(\mathfrak r(\Delta, \mathfrak h)) .
\]

We now prove (2) implies (1). Again write $\Delta'=[a',b']_{\rho}$. Suppose $(\Delta, \Delta', \mathfrak h)$ does not satisfy the nonoverlapping property. Again, denote by 
\[  \Delta_1, \ldots, \Delta_r
\]
the removal sequence for $(\Delta, \mathfrak h)$. Let $\Delta_l$ be the shortest segment in the removal sequence containing $\nu^{-1}a(\Delta')$. Note that
\[  \Delta_1, \ldots, \Delta_{l-1}, \Delta_l
\]
do not contribute to $\eta_{\Delta'}(\mathfrak h)$ (by considering $a(\Delta_i)$) and similarly, 
\[  \Delta_1^{tr}, \ldots, \Delta_{l-1}^{tr}
\]
do not contribute to $\eta_{\Delta'}(\mathfrak r(\Delta, \mathfrak h))$. However, $\Delta_l^{tr}$ contributes to $\eta_{\Delta'}(\mathfrak r(\Delta, \mathfrak h))$. This causes a difference of $1$ in the coordinate $\varepsilon_{\Delta'}$ for $\eta_{\Delta'}(\mathfrak h)$ and $\eta_{\Delta'}(\mathfrak r(\Delta, \mathfrak h))$. 

It remains to see the following claim: \\
{\it Claim:} For $k >l$, $\Delta_k$ contributes to $\eta_{\Delta'}(\mathfrak h)$ if and only if $\Delta_k^{tr}$ contributes to $\eta_{\Delta'}(\mathfrak r(\Delta, \mathfrak h))$.  \\

\noindent
{\it Proof of claim:}  If $\Delta_k$ does not contribute to $\eta_{\Delta'}(\mathfrak h)$, then $b(\Delta_k)<b(\Delta')$ and so $b(\Delta_k^{tr})<b(\Delta')$ (or $\Delta_k^{tr}$ is dropped or a empty set). This implies that $\Delta_k^{tr}$ does not contribute to $\eta_{\Delta'}(\mathfrak r(\Delta, \mathfrak h))$. 

On the other hand, if $\Delta_k$ contributes to $\eta_{\Delta'}(\mathfrak h)$, then $b(\Delta_k) \geq b(\Delta')$. Note that $\Delta_k^{tr}$ is non-empty by using $\Delta <\Delta'$. Thus we also have $b(\Delta_k^{tr}) \geq b(\Delta')$. We also have that $\Delta_k^{tr}$ contributes to $\eta_{\Delta'}(\mathfrak r(\Delta, \mathfrak h))$. This completes proving the claim.\\

Note that (1) $\Rightarrow$ (3) follows from the segment involved in the definition of overlapping property.

\end{proof}

\begin{example}
Let $\mathfrak h=\left\{ [0,5]_{\rho}, [3,8]_{\rho} \right\}$. 
\begin{itemize}
\item Let $\Delta=[0,3]_{\rho}$ and let $\Delta'=[3,6]_{\rho}$. In such case, $\eta_{\Delta'}(\mathfrak h)=(1,0,0,0)$. Similarly, $\eta_{\Delta'}(\mathfrak r(\Delta, \mathfrak h))=\eta_{\Delta'}(\left\{ [3,8]_{\rho}, [4,5]_{\rho} \right\})=(1,0,0,0)$. 
\item Let $\Delta=[0,3]_{\rho}$ and let $\Delta'=[3,4]_{\rho}$. In such case, $\eta_{\Delta'}(\mathfrak h)=(1,0)$. And $\eta_{\Delta'}(\mathfrak r(\Delta, \mathfrak h))=\eta_{\Delta'}(\left\{ [3,8]_{\rho}, [4,5]_{\rho} \right\})=(1,1)$. 
\end{itemize}

\end{example}

A consequence of Proposition \ref{prop dagger eta condition} is the following:

\begin{corollary} \label{cor transitivity on dagger property}
Suppose the triple $(\Delta, \Delta', \mathfrak h)$ satisfies the non-overlapping property. Write $\Delta'=[a',b']_{\rho}$. Then, for any segment $\widetilde{\Delta}$ linked to $\Delta$ and of the form $[\widetilde{a},b']_{\rho}$ for $\widetilde{a}\geq a'$, the triple $(\Delta, \widetilde{\Delta}, \mathfrak h)$ also satisfies the non-overlapping property. 
\end{corollary}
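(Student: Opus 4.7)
The plan is to bypass a direct analysis of the removal sequence and instead transfer the problem via the equivalence $(1)\Leftrightarrow(3)$ in \prref{dagger eta condition}. Concretely, the non-overlapping hypothesis for $(\Delta,\Delta',\mathfrak h)$ yields, through that proposition, a segment $\widetilde{\Delta}_0 \in \mathfrak h$ satisfying the intermediate segment property
\[
a(\Delta)\leq a(\widetilde{\Delta}_0)<a(\Delta'),\qquad b(\Delta)\leq b(\widetilde{\Delta}_0)<b(\Delta').
\]
My claim is that this same $\widetilde{\Delta}_0$ will witness the intermediate segment property for $(\Delta,\widetilde{\Delta},\mathfrak h)$, so that another application of \prref{dagger eta condition} delivers the desired non-overlapping property.

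The key step is verifying the two inequalities. Since $\widetilde{a}\geq a'$, we have $a(\widetilde{\Delta}_0)<a(\Delta')=\nu^{a'}\rho\leq \nu^{\widetilde{a}}\rho=a(\widetilde{\Delta})$, and since $b(\widetilde{\Delta})=b(\Delta')$ by construction, $b(\widetilde{\Delta}_0)<b(\widetilde{\Delta})$. The lower bounds $a(\widetilde{\Delta}_0)\geq a(\Delta)$ and $b(\widetilde{\Delta}_0)\geq b(\Delta)$ are inherited directly from the intermediate segment property for $(\Delta,\Delta',\mathfrak h)$. Thus $\widetilde{\Delta}_0$ satisfies (\ref{eqn intermediate segment prop}) with $\widetilde{\Delta}$ in place of $\Delta'$.

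Before invoking \prref{dagger eta condition} in the reverse direction, I would check that the framework requirements are met: the statement of the corollary assumes $\widetilde{\Delta}$ is linked to $\Delta$, and the strict inequality $b(\widetilde{\Delta})=b(\Delta')>b(\Delta)$ (coming from $\Delta'>\Delta$) ensures $\widetilde{\Delta}>\Delta$, so the hypotheses of the equivalence apply to $(\Delta,\widetilde{\Delta},\mathfrak h)$.

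I do not anticipate any serious obstacle. The entire content is the observation that shrinking $\Delta'$ to $\widetilde{\Delta}$ (by raising the left endpoint while keeping the right endpoint $b'$ fixed) only makes the intermediate segment condition easier to fulfill: the required upper bound on $a(\widetilde{\Delta}_0)$ is relaxed from $a'$ to $\widetilde{a}$, while the upper bound on $b(\widetilde{\Delta}_0)$ is unchanged. If anything, the only mild subtlety is recognizing that one cannot attack this directly from the non-overlapping definition (the shortest removal-sequence segment containing $\nu^{\widetilde{a}-1}\rho$ could a priori differ from the one containing $\nu^{a'-1}\rho$, and smaller $\widetilde{\Delta}$ is easier to be contained), which is precisely why routing through the intermediate segment characterization is the clean path.
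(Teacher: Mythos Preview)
Your proof is correct and follows the same approach as the paper, which simply records the corollary as ``a consequence of \prref{dagger eta condition}'' without further detail. Your choice of routing through the equivalence $(1)\Leftrightarrow(3)$ is clean; one could equally well use $(1)\Leftrightarrow(2)$, since $\eta_{\widetilde{\Delta}}$ is a tail of the tuple $\eta_{\Delta'}$, but both are immediate from the proposition.
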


\end{document}